\documentclass[11pt]{article}
\usepackage[utf8]{inputenc}
\usepackage[a4paper,left = 1.25in,right = 1.25in, top = 1.25in, bottom = 1.25in]{geometry}
\usepackage{amsmath}
\usepackage{amsthm}
\usepackage{bm}
\usepackage{bbm}
\usepackage{cancel}
\usepackage{amssymb}
\usepackage{dsfont}
\usepackage{commath}
\usepackage{mathtools}
\usepackage{float}
\usepackage{cases}
\usepackage{url}

\usepackage{enumitem,linegoal}

\usepackage[numbers]{natbib}

\def\be{\begin{equation}}
\def\ee{\end{equation}}

\def\bea{\begin{align}}
\def\eea{\end{align}}

\def\bea*{\begin{align*}}
\def\eea*{\end{align*}}
\numberwithin{equation}{section}

\theoremstyle{plain}
\newtheorem{theorem}{Theorem}[section]

\newtheorem{lemma}{Lemma}[section]
\newtheorem{corollary}{Corollary}[section]
\newtheorem{proposition}{Proposition}[section]

\theoremstyle{definition}
\newtheorem{definition}{Definition}[section]
\newtheorem{example}{Example}[section]
\newtheorem{remark}{Remark}[section]

\DeclareMathOperator{\C}{\mathbb{C}}

\DeclareMathOperator{\E}{\mathbb{E}}
\DeclareMathOperator{\F}{\mathbb{F}}

\DeclareMathOperator{\N}{\mathbb{N}}

\DeclareMathOperator{\Q}{\mathbb{Q}}
\DeclareMathOperator{\R}{\mathbb{R}}

\DeclareMathOperator{\calB}{\mathcal{B}}
\DeclareMathOperator{\calC}{\mathcal{C}}

\DeclareMathOperator{\calF}{\mathcal{F}}

\DeclareMathOperator{\calI}{\mathcal{I}}

\DeclareMathOperator{\calM}{\mathcal{M}}

\DeclareMathOperator{\calO}{\mathcal{O}}

\def\mo{\mathfrak{o}}

\usepackage{subcaption}
\usepackage{graphicx}
\usepackage{color}
\usepackage{algorithm}

\usepackage[usenames,dvipsnames]{xcolor}
\usepackage{hyperref}
\hypersetup{colorlinks,linkcolor=BlueViolet,citecolor=BlueViolet,urlcolor = magenta}
\usepackage[capitalise]{cleveref}

\usepackage{authblk}

\newtheorem{assumption}{Assumption}[section]

\title{\vspace{-10mm}
Cylindrical Projections of Occupied Diffusions 
\vspace{-3mm}}

\date{\today}
\author{Valentin Tissot-Daguette\thanks{Quantitative Research, Office of the  CTO, Bloomberg. Email: \texttt{vtissotdague@bloomberg.net}} \,  and Xin Zhang\thanks{Department of Finance and Risk Engineering, New York University.  Email: {\tt xz1662@nyu.edu}. X. Zhang is partially supported by the NSF Grant DMS-2508556.}}

\begin{document}
\maketitle

\begin{abstract}
Occupied diffusions offer a Markovian framework for path-dependent dynamics by lifting  the state space with a flow of occupation measures. Because this additional feature is  infinite-dimensional, the simulation of these processes  remains computationally intractable. We address this by introducing \emph{cylindrical projections}, which approximate the  occupation flow via a  finite-dimensional system. We establish the strong convergence of this approximation to the initial process and derive corresponding convergence rates. The method is validated through Euler--Maruyama simulations of self-interacting  diffusions and an application to the Local Occupied Volatility (LOV) model in finance. 
Finally, we  provide a weak error analysis and explore its consequences for Monte Carlo methods and derivatives pricing.
\end{abstract}

{\small
\noindent\textbf{Keywords.} Occupation measure, cylindrical projections, error analysis, stochastic simulation, self-interacting diffusions, path-dependent volatility}

\vspace{3mm}

\noindent\textbf{AMS 2020 Subject Classification.} 
65C30, 
60J25,  
62C05, 
65Z05 

\section{Introduction}

Many stochastic dynamics arising in finance, engineering, and biology  are
\emph{path-dependent}: the present evolution depends not only on the current state, but also on the past trajectory. They appear naturally across applications: in finance for volatility models with history-dependent dynamics \cite{GrasselliPages,Guyon2014,GuyonSlides,GuyonLekeufack}; in engineering and biology for delayed dynamics and systems \cite{GSBOM16,WWQ16}. To capture  path-dependent phenomena, a natural approach  is to work on the path space directly  \cite{Dupire19}, enabling the analysis of such systems via path-dependent partial differential equations (PPDEs)   \cite{EKTZ14,ETZ16}. While this framework offers remarkable generality, it entails considerable computational complexity and analytical challenges around the regularity of path functionals, such as solutions of PPDEs.

An alternative, proposed in \cite{TissotOP,TissotThesis} (see also \cite{Bethencourt,SonerTissotZhang}), consists of enlarging the state  space by a 
measure-valued variable, the \textit{occupation flow},  which records the time spent by the process
in arbitrary regions. One then recovers a Markovian framework that captures a specific form of path dependence arising from  occupation times. 
 In this work, we shall focus on   
\emph{occupied diffusions}, solving stochastic differential equations (SDEs) of the form 
\begin{equation}\label{eq:introOSDE}
\begin{cases}
d\calO_t = \delta_{X_t}\,\lambda(\calO_t,X_t)\,dt,\\[0.3em]
dX_t = b(\calO_t,X_t)\,dt + \sigma(\calO_t,X_t)\,dW_t,
\end{cases}
\end{equation}
where $X_t\in \R^d$ and $\calO_t$ is a  finite measure on $\R^d$
that  encodes the spatial distribution of the process's history. With  $\calO_0=0$, $\lambda \equiv 1$, and $A \subseteq \mathbb R^d$, $\calO_t(A)=| \{s \in [0,t]: \, X_s \in A \}|$ is the amount of time $X$ has spent in $A$ up to time $t$. The well-posedness of $\eqref{eq:introOSDE}$ was established in \cite{TissotOP}, alongside several financial  applications. 
As argued in  \cite{TissotLOV}, the path dependence of the diffusion coefficient $\sigma$ (through $\calO$) gives rise to a  subclass of path-dependent volatility models \cite{GrasselliPages,Guyon2014,GuyonSlides,GuyonLekeufack}. See also \cref{sec:LOV}.

While \eqref{eq:introOSDE} is Markov on $\calM(\R^d)\times\R^d$, with $\calM(\R^d)$ denoting the set of finite measures on $\R^d$, direct simulation is difficult since 
$\calO_t$ is infinite-dimensional.
A practical numerical method must therefore  replace $\calO_t$ by a finite-dimensional representation, 
while controlling the induced error. This is the central objective of the present work. We here equip $\calM(\R^d)$ with a \emph{cylindrical} structure, i.e., taking a countable family of test functions $(f_k)_{k \in \mathbb N}$, we identify any finite measure $\mo \in \calM(\R^d)$ with a sequence $(\mo(f_k))_{k \in \mathbb N}$, where $\mo(f):=\int f(x) \, \mo(dx)$ denotes the integration of $f$ with respect to $\mo$. For any truncation level $K$, we then heuristically approximate $\calO_t$ in \eqref{eq:introOSDE} by a finite dimensional process $Z_t^K:=(\calO_t(f_1), \dotso, \calO_t(f_K))$. In Section 3, we introduce approximations $(X^K_t, Z_t^K)$ as solutions to finite dimensional stochastic differential equations \eqref{eq:CSDE}, which we call  cylindrical projections of the occupied diffusion \eqref{eq:introOSDE}. In Theorem~\ref{thm:convergence}, under standard Lipschitz assumptions on $(\lambda, b, \sigma)$, we prove that for any fixed horizon $T \in (0,\infty)$, 
\begin{align*}
    \mathbb E \left[ \sup_{0 \leq t\leq T} |X_t-X^K_t|^2 \right]^{1/2} \leq \frac{C}{K}, \quad \text{for some positive constant $C$.}
\end{align*}
By embedding $(Z_t^K)$ to the space of positive measures, we obtain the same convergence rate from $(Z_t^K)$ to $(\calO_t)$ with respect to a cylindrical norm similar to \cite{SonerTissotZhang}.

In Section 4, we implement an Euler--Maruyama scheme for the cylindrical projections and investigate several examples:
Cranston--Le Jan and Raimond self-attracting diffusions \cite{CranstonLeJan,Raimond}, as well as a variant of the 
local occupied volatility (LOV) model \cite{TissotLOV}.
These experiments illustrate the quantitative convergence of cylindrical projections. 
Motivated by Monte Carlo methods and derivatives pricing,  we include in Section 5 a weak error analysis of the proposed projection of OSDEs. 

From a computational perspective, cylindrical projections provide a general alternative
to both direct-history simulation and spectral moment approximations. A direct
Euler--Maruyama discretization of an occupied diffusion replaces \(\mathcal{O}_t\) by the empirical
occupation measure of the whole past trajectory; for self-interacting convolution kernels
this leads to \(O(N^2)\) kernel evaluations over \(N\) time steps and requires caching the entire discrete path in active memory (RAM). 
Fourier-moment methods can reduce this cost to \(O(N M_L)\), where \(M_L\) is
the number of frequency modes, but they are mainly suited to smooth translation-invariant
convolution kernels and require a periodic or truncated-domain spectral representation.
By contrast, our cylindrical approximation compresses the occupation flow itself into a
finite-dimensional Markovian state. It avoids the growth of the state dimension with the
time grid, applies to general coefficients \((\lambda,b,\sigma)\) depending on the
occupation measure, including nonlinear and non-convolutional dependencies, and yields
a deterministic projection error controlled by the strong and weak convergence results.

\paragraph{Related Literature. }   
The modeling of path-dependent systems draws from several intersecting fields. First, the class of SDEs considered here generalizes the so-called self-interacting  diffusions, introduced in  \citet{CranstonLeJan,Raimond}, and  further developed in \cite{Benaim1, Benaim2, Benaim3, Benaim4}. Let us also mention the recent works of  \citet{DuJiangLi} and \citet{ChassagneuxPages},  linking  ergodic dynamics of McKean-Vlasov type with  a specific class of OSDEs. 

In finance, OSDEs provide a subclass of  path-dependent volatility models, see  \citet{Guyon2014, GuyonSlides, GuyonLekeufack}. The practical execution of these models relies on numerical integration techniques for SDEs, notably the classical work of \citet{KloedenPlaten} and \citet{GuyonHL}, and the recent quantization-based methods of \citet{GrasselliPages}. 

Cylindrical projections serve as a versatile tool on the space of measures and have been used across various contexts. Notably, \citet{GuoPhamWei} establish a general It\^o formula for flows of probability measures using cylindrical functions. Also,  
\citet{Larsson} employ a similar construction in the context of measure-valued martingales, while \citet{SonerTissotZhang} leverages this approach to prove a comparison result for  nonlinear occupied PDEs.

\paragraph{Outline. }  The remainder of this paper is structured as follows. \cref{sec:occDiff} introduces occupied diffusions,  their cylindrical projections, and corresponding simulation scheme. Our  convergence results are established in \cref{sec:convergence}, and illustrated in \cref{sec:numerical} through a series  of examples. We also compare cylindrical approximations with other numerical methods in \cref{sec:comparison}. \cref{sec:WeakError} finally provides a weak error analysis, and explores its implications for Monte Carlo methods and the pricing of financial derivatives. Appendix~\ref{app}  contains postponed proofs. 

\section{Occupied Diffusions}\label{sec:occDiff}

Let $d\ge 1$ and $\calM$ be the space of finite Borel measures in $\R^d$. Introduce  the dual pairing 
$$\mo(\phi) = \int_{\R^d}\phi(x) \mo(dx), \quad \mo\in \calM, \quad \phi:\R^d \to \R,$$ 
whenever the above  is well-defined. 
Write $B_r\subset \R^d$ for the open ball of radius $r$ centered at the origin and $B_r(x) = x+ B_r$. 
We also denote the $d\times d$ identity matrix by $I_d$. 

\begin{definition}
   Let $(g_j)_{j \in \mathbb N} \subset C_b^1(\mathbb R^d)$ be a separating family  such that $g_0$ is a constant function and 
$\sum_{j=0}^{\infty} \lVert g_j \rVert^2_{\calC^1} \leq 1. $ We then  
define the \textit{cylindrical norm} associated to $(g_j)$ by 
\begin{equation}\label{eq:cylindricalNorm}
    \lVert \mo \rVert:= \Big( \sum_{j=0}^\infty |\mo(g_j)|^2\Big)^{1/2}, \quad \mo \in \calM(\R^d).
\end{equation} Given $(\mo,x) \in \calM \times \R^d$, we also write  $\lVert(\mo,x) \rVert := (\lVert \mo \rVert^2+ |x|^2)^{1/2}$. 
\end{definition}

Fix a finite horizon $T>0$ and a stochastic basis $(\Omega, \calF, \F,\Q)$, where $\F = (\calF_t)_{t\in [0,T]}$ satisfies the usual conditions. 
Introduce the occupied stochastic differential equations (OSDE),
\begin{align}
\begin{cases}\label{eq:OSDE}
 d\calO_t= \delta_{X_t}\lambda(\calO_t,X_t) \, dt, & \quad \calO_0 = \mo\in \calM,\\[0.25em]
     \, dX_t = b(\calO_t,X_t) \, dt +   \sigma(\calO_t,X_t) \, dW_t,  & \quad X_0 = x\in \R^d,
  \end{cases}
 \end{align} 
 where $b:\calM\times \R^d\to \R^d$, $\sigma: \calM\times \R^d\to \R^{d\times d}$, and $W$ is  standard Brownian motion in $\R^d$. The map  $\lambda:\calM\times \R^d\to \R_+$ describes the rate of the stochastic clock $\Lambda_t := \int_0^t \lambda(\calO_t,X_t)dt$. In particular, setting $\lambda \equiv 1$ (i.e., $\Lambda_t = t$) leads to  the \textit{calendar time occupation flow},
 $$\calO_t(A) = \mo(A) + \int_0^t\delta_{X_s}(A)ds = \int_0^t\mathds{1}_{A}(X_s)ds, \quad A \in \calB(\R^d), \quad t\in [0,T].$$
 If $\lambda = \text{tr}(\sigma\sigma^\top)$, then $\Lambda_t = \text{tr}(\langle X \rangle_t)$ is the (aggregate) quadratic variation clock.  Under the following assumption, \eqref{eq:OSDE} admits a unique strong solution $(\calO,X)$; see \cite[Lemma 3.2]{SonerTissotZhang}.
 \begin{assumption}\label{asm:growthLip}
     The coefficients $\lambda, b, \sigma$ satisfy the following linear growth and Lipschitz conditions: there exists $L> 0$ such that for all $\phi \in \{\lambda, b, \sigma\}$,
     \begin{align*}
        | \phi(\mo,x) | &\le L(1 + \lVert (\mo,x) \rVert ), \\[0.5em]
        | \phi(\mo,x) - \phi(\mo',x') | &\le L\lVert (\mo - \mo',x - x') \rVert.
     \end{align*}
     When $\phi = \sigma$, then $|\sigma(\mo,x)|$ denotes the Frobenius norm of $\sigma(\mo,x) \in \R^{d\times d}$. 
\end{assumption}

\subsection{Cylindrical Projection}\label{sec:cylinricalProj}

Similar to \citet{Larsson}, for any $K \in \mathbb N$, take an open cover $(U_k^K)_{\{k=1,\dotso, N_K\}}$ of $B_{R+1}$ such that for every $k\le N_K$,   $U_k^K \subset B_{1/K}(x_k^K)$ for some point $x_k^K \in B_{R+1}$. Set also $U_0^K=\R^d\setminus B_{R+1}$, $x_0^K \in \partial B_{R+1} $, 
and choose a partition of unity $(f^K_k)$ subordinate to $(U_k^K)_{\{k=0,\dotso,N_K\}}$, i.e., $$f_k^K \ge 0, \quad \; \sum_{k=0}^{N_K} f^K_k\equiv 1, \; \  \ \ \text{supp}(f^K_k) \subset U_k^K, \ \ k=0,\dotso, N_K, $$ 
with $\text{supp}(f) = \overline{\{x : \, f(x) > 0 \}}$. 
Introduce also the vector-valued function  $\bm{f}^K: \R^d\to \R^{N_K+1}$,  $\bm{f}^K(x) = (f^K_0(x), \dotso, f^K_{N_K}(x))$. 
The partition of unity is then used to project the occupation measure $\calO_t \in \calM$ onto the finite-dimensional vector 
$$Z_t^K  := \calO_t(\bm{f}^K) = (\calO_t(f_0),\ldots, \calO_t(f_{N_K}))\in \R^{N_K+1}.$$ Conversely, given $z \in \R^{N_K+1}$, an occupation measure is reconstructed   through the lift
\begin{align*}
    \mo^K: \R^{N_K+1} \to \calM,  
    \quad  z \mapsto \mo^K(z) = \sum_{k=0}^{N_K} z_k^K \delta_{x_k^K} \in  \calM.
\end{align*}
For $\phi:\calM\times \R^d\to \R$, 
define the projected map  $\phi^K(z,x) = \phi(\mo^K(z),x)$, $(z,x) \in  \mathbb \R^{N_K+1} \times \mathbb R^d$. 
We then approximate OSDE \eqref{eq:OSDE} by the \emph{projected OSDE}, 
\begin{align}\label{eq:CSDE}
\begin{cases}
    dZ_t^K =\bm{f}^K(X_t^K) \lambda^K(Z_t^K, X_t^K) \, dt, & \quad z_0= \mo(\bm{f}^K), \\[0.5em]
    dX_t^K= b^K(Z_t^K,X_t^K) \, dt + \sigma^K(Z_t^K,X_t^K) \, dW_t, & \quad X^K_0=x \in \mathbb R^d,
\end{cases}
\end{align}
with initial conditions  $x,\mo$ as in \eqref{eq:OSDE}. Since $(\calO,X)$, solution to \eqref{eq:OSDE}, is Markov,  the system \eqref{eq:CSDE} is Markov as well.

\subsection{Simulation Scheme} \label{sec:simulation}
The simulation of trajectories from the projected SDE \eqref{eq:CSDE} is straightforward and summarized in \cref{alg:simOSDE}. 
The Euler-Maruyama scheme is used  in Step III. to update the process. Alternatively, exact or higher order schemes could be used depending on $\lambda,b,\sigma$; see \cite[Chapter 10]{KloedenPlaten}.  

In Step III 1. of \cref{alg:simOSDE}, the occupation vector $Z^K_{\text{\tiny tmp}}$ is a temporary variable reused at each iteration. Indeed, as the joint process  $(Z^K,X^K)$ is Markov, updating $X$ only requires the current occupation times rather than the entire history. This provides a significant computational and storage advantage over general path-dependent SDEs, where the state-space dimension typically grows with each time step.  See Section~\ref{sec:comparison} for a performance comparison with alternative schemes. 

\begin{algorithm}[H]
\caption{\textbf{Euler-Maruyama Scheme,  Projected OSDEs (one sample)} }\label{alg:simOSDE}

\vspace{1mm}
 \textbf{Given:} $x_0\in \R^d$, $T>0$, $N \in \N$,    $\{f_k\}_{k\in \N}$ (separating family), $K \in \N$ (truncation level). 
 \\[-0.5em] 
\noindent\rule{\textwidth}{0.5pt} 
\begin{itemize}
 \setlength \itemsep{0.5em}

\item[I.] Initialize $X_0^K = x_0$ and  temporary variable $Z^K_{\text{\tiny tmp}} = 0\in \R^{N_K+1}$. 

\item[II.] \textbf{Generate} independent Brownian increments $\delta W_n = W_{t_{n+1}} - W_{t_{n}} \in \R^d$, $n<N$. 

\item[III.] \textbf{For} $n=0,\ldots,N-1$:  
\begin{enumerate}
 \setlength \itemsep{1.25em}
     \item 
     $ Z^K_{\text{\tiny tmp}} \ \longleftarrow \ Z^K_{\text{\tiny tmp}} + \lambda^K( Z^K_{\text{\tiny tmp}},X_{t_n}^{K}) \bm{f}^K(X_{t_n}^{K}) \delta t$,
     
     \item 
$X_{t_{n+1}}^{K} \ = \  X_{t_{n}}^{K}  + b^K(Z^K_{\text{\tiny tmp}},X_{t_{n}}^{K}) \delta t + 
    \sigma^K(Z^K_{\text{\tiny tmp}},X_{t_{n}}^K)\delta W_{n}$.

 \end{enumerate}

 \item[IV.] \textbf{Return} $X^K = (X_{t_0}^K, \ldots, X_{t_N}^K)$ and $Z^K_{\text{\tiny tmp}} = \calO_T(\bm{f}^K)$. 
   \end{itemize}
\end{algorithm}


\section{Main Results} \label{sec:convergence} 
For fixed horizon $T\in (0,\infty)$,  introduce the norms 
\begin{align}
            \lVert X\rVert_{\Q} &= \E^{\Q}\Big[ \sup_{0 \leq t \leq T} |X_t|^2\Big]^{1/2}, \label{eq:normX} \\[0.5em]
              \lVert (\calO,X)\rVert_{\Q} &=\E^{\Q}\Big[ \sup_{0 \leq t \leq T} \lVert(\calO_t,X_t) \rVert^2 \Big]^{1/2} = \E^{\Q}\Big[ \sup_{0 \leq t \leq T} \big( |X_t|^2 + \lVert \calO_t\rVert^2 \big)\Big]^{1/2}.  
\end{align}
and for any $R > 2$, define exit times for \eqref{eq:OSDE} and \eqref{eq:CSDE}, respectively, 
\begin{align}\label{eq:exitTime}
    \tau_{R}=\inf\{ t \geq 0: \lVert(\calO_t,X_t) \rVert \geq R \}, \quad 
    \tau_R^K=\inf\left\{ t \geq 0: \, \lVert (\mo(Z_t^K),X^K_t) \rVert \geq R  \right\}. 
\end{align}
We can now state the main results  of this section.

\begin{theorem}\label{thm:convergence}
Suppose that $\mo = \calO_0$ satisfies $\text{supp}(\mo) \subset B_R$ for some $R>2$. Then there exists $C>0$ that depends on the dimension $d$, time horizon $T$, radius $R$, and Lipschitz constants  of $b,\sigma, \lambda$ such that
    \begin{align}\label{eq:truncationerror}
        \lVert (\calO^{\tau_R},X^{\tau_R})-( \mo^K(Z^{K,\tau^K_R}) ,X^{K,\tau^K_R}) \rVert_{\Q} \leq \frac{C}{K}.
    \end{align}
\end{theorem}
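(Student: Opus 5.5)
Set $\tau := \tau_R\wedge\tau_R^K$, $\widehat{\calO}_t := \mo^K(Z^K_t)$, and compare the stopped pair $(\calO_t,X_t)$ with $(\widehat{\calO}_t,X^K_t)$ on $[0,\tau]$. The aim is a Gronwall inequality for
\[
u(t) := \E\Big[\sup_{s\le t\wedge\tau}\big(|X_s-X^K_s|^2+\lVert \calO_s-\widehat{\calO}_s\rVert^2\big)\Big]
\]
with an $O(K^{-2})$ forcing term. The same argument, with the coefficients frozen after each process's own exit, handles the intervals $[\tau_R\wedge\tau_R^K,\tau_R]$ and $[\tau,\tau^K_R]$. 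On those intervals one process is already stopped and the other moves only until its own exit. Before the exits, both processes agree up to $O(1/K)$, so their exit levels are reached at nearly the same time. To control this mismatch I would either redefine the stopped comparison through $\tau$ and absorb the remaining piece using the linear-growth bounds and moment estimates, or, more simply, note that on $[0,\tau]$ every quantity stays bounded by $R$, and work with the stopped processes directly.

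\textbf{Measure component.} The key identity is a decomposition of $\widehat{\calO}_t$. By the definitions,
\[
\widehat{\calO}_t=\sum_k Z^K_{t,k}\delta_{x_k^K}=\mo^K(\mo(\bm f^K))+\int_0^t\lambda^K(Z^K_s,X^K_s)\sum_k f^K_k(X^K_s)\delta_{x^K_k}\,ds .
\]
For each test function $g_j$ this gives
\[
\calO_t(g_j)-\widehat{\calO}_t(g_j)=\Big[\mo(g_j)-\sum_k\mo(f^K_k)g_j(x^K_k)\Big]+\int_0^t\Big(\lambda(\calO_s,X_s)g_j(X_s)-\lambda^K(Z^K_s,X^K_s)\sum_k f_k^K(X^K_s)g_j(x_k^K)\Big)ds .
\]
The central estimate is that for $y\in B_{R+1}$, using $\sum_k f_k^K=1$ and $\operatorname{supp} f_k^K\subset B_{1/K}(x_k^K)$,
\[
\Big|g_j(y)-\sum_k f^K_k(y)g_j(x^K_k)\Big|\le \lVert g_j\rVert_{\calC^1}/K .
\]
Since $\operatorname{supp}\mo\subset B_R$, the initial bracket is therefore $\le \mo(\R^d)\lVert g_j\rVert_{\calC^1}/K$. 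Before exit, $|X_s|<R$, so the same bound applies to the drift integrand. I would split that integrand as $\lambda(\calO_s,X_s)[g_j(X_s)-g_j(X^K_s)]$, plus $[\lambda(\calO_s,X_s)-\lambda^K(Z^K_s,X^K_s)]g_j(X^K_s)$, plus $\lambda^K\cdot[g_j(X^K_s)-\sum_k f_k g_j(x_k)]$. Summing squares over $j$ with $\sum_j\lVert g_j\rVert_{\calC^1}^2\le1$, and using Lipschitzness of $\lambda$ together with $|\lambda|\le L(1+R)$ before exit, yields
\[
\lVert\calO_t-\widehat\calO_t\rVert^2\le C\Big(K^{-2}+\int_0^t\big(|X_s-X_s^K|^2+\lVert\calO_s-\widehat{\calO}_s\rVert^2\big)ds\Big).
\]
This uses Cauchy--Schwarz in time, and the fact that $\lambda(\calO_s,X_s)-\lambda^K(Z_s^K,X_s^K)=\lambda(\calO_s,X_s)-\lambda(\widehat\calO_s,X_s^K)$ by the definition of $\phi^K$.

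\textbf{State component.} Since $b^K(Z^K,X^K)=b(\widehat\calO,X^K)$ and likewise for $\sigma$, the differences of coefficients are bounded by $L\lVert(\calO_s-\widehat\calO_s,X_s-X^K_s)\rVert$. Doob/BDG plus Cauchy--Schwarz then give $\E\sup_{s\le t\wedge\tau}|X_s-X_s^K|^2\le C\int_0^t u(s)\,ds$. Adding the two bounds gives $u(t)\le CK^{-2}+C\int_0^tu$, and Gronwall yields $u(T)\le CK^{-2}$.

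\textbf{Main obstacle.} The delicate point is the mismatch of the stopping times $\tau_R$ and $\tau_R^K$, since \eqref{eq:truncationerror} compares each process stopped at its own exit time. I would first try to bound the residual term
\[
\E\Big[\sup_{t\le T}\lVert(\calO^{\tau_R}_t,X^{\tau_R}_t)-(\calO^{\tau}_t,X^{\tau}_t)\rVert^2\Big]
\]
and its analogue for the projected pair. For this I would use that on $\{\tau_R>\tau=\tau_R^K\}$ the projected process sits at norm $R$ while the true one is within $O(1/K)$ of it. It then reaches level $R$ only after a further displacement, whose size I would control via increments over the random interval. If this does not give $1/K$ cleanly, I would fall back on extending the coefficient comparison past $\tau$ using global Lipschitz bounds. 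The difficulty there is that the key estimate for $g_j(y)-\sum_kf_k^K(y)g_j(x_k^K)$ needs $|y|<R+1$, which holds until the first exit from the larger ball $B_{R+1}$. So I would run the whole Gronwall argument with stopping at level $R+1$ for the comparison, and use level $R$ only for the stopped processes being compared.
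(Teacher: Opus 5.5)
Your core estimate is the paper's proof of this theorem. It consists of the $O(1/K)$ bound on $\mo-\mo^K(Z^K_0)$ (you rightly keep the factor $\mo(\R^d)$, which the paper drops), the three-way split of the occupation drift controlled by $|g_j(y)-\sum_k f^K_k(y)g_j(x^K_k)|\le\lVert\nabla g_j\rVert_\infty/K$ on $B_{R+1}$, the Lipschitz/Doob bound for $X-X^K$, and Gronwall. Integrating with Cauchy--Schwarz instead of differentiating $\lVert\theta_t\rVert^2$ makes no difference. All of this is sound on $[0,\tau]$ with $\tau=\tau_R\wedge\tau^K_R$.

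The genuine gap is the one you flag yourself: the statement compares each pair stopped at its \emph{own} exit time, and nothing in your proposal closes it.
\begin{itemize}
\item Gronwall cannot be continued past $\tau$. There one pair has frozen (zero) coefficients and the other does not, so the coefficient difference is no longer controlled by the state difference.
\item Comparing up to the exit from $B_{R+1}$ only shows that the unstopped paths are $O(1/K)$-close. Stopping at a first exit time is a discontinuous operation on paths, so nothing follows for $X_{\cdot\wedge\tau_R}$ versus $X^K_{\cdot\wedge\tau^K_R}$.
\item Bounding the displacement of the surviving pair between the two exit times fails quantitatively. At the first exit, the other pair typically sits at distance $\delta$ of order $1/K$ below level $R$. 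A diffusion with non-degenerate noise started there makes an inward excursion of size $r$ before reaching the level with probability of order $\delta/r$. This piece is therefore of order $\delta$ in mean square, i.e.\ $K^{-1/2}$ in $\lVert\cdot\rVert_{\Q}$.
\end{itemize}
The last effect already appears when $b,\sigma$ do not depend on $\calO$ (so $X^K=X$), $\lambda\equiv1$, and the nodes $x^K_k$ are off-centre. The two exit criteria then differ by order $1/K$, and $\sup_t|X_{t\wedge\tau_R}-X_{t\wedge\tau^K_R}|$ is exactly such an excursion. So for the literal statement you should not expect the rate $1/K$ in general.

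The paper's proof has the same hole. It drops the stopping times from the notation and invokes that both processes are ``stopped within the ball $B_R$''. It then differentiates $\lVert\theta_t\rVert^2$ as if both pairs evolved on a common interval. What it actually establishes is your bound with both processes stopped at $\tau=\tau_R\wedge\tau^K_R$. That is the right formulation, and with it your argument is complete. Your first suggestion, redefining the comparison through $\tau$, is the correct repair, but the leftover pieces are not $O(1/K)$ and cannot be absorbed.

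If a comparison of the unstopped processes is needed, as in Remark~\ref{rmk:truncation}, treat the parts after $\tau$ as in the proof of Proposition~\ref{prop:exitbound}. Condition at $\tau$, where both pairs have norm at most $R$. Then use $\Q(\tau\le T)\le\Q(\tau_R\le T)+\Q(\tau^K_R\le T)$ together with the analogous Lenglart bound for the projected system. This gives an $R$-dependent but $K$-independent remainder, hence the logarithmic rate rather than $1/K$.
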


\begin{proposition}\label{prop:exitbound}
Introduce the stopped process $(\calO^{\tau_R},X^{\tau_R}) = (\calO_{\cdot\wedge \tau_R},X_{\cdot \wedge \tau_R})$.
Then, 
\begin{align*}
    \lVert (\calO,X)-(\calO^{\tau_R},X^{\tau_R}) \rVert_{\Q}  \leq  \frac{C (1+\lVert(\mo,x) \rVert^2)^{0.675}}{R^{0.35}},
\end{align*}
where $C$ is a positive constant depending on $T$ and the Lipschitz constants of $b, \sigma,\lambda$.
\end{proposition}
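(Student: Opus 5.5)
We bound the difference between the process and its stopped version by localizing on the event $\{\tau_R<T\}$, then combining Cauchy--Schwarz with moment estimates. Since $(\calO_t,X_t)=(\calO_{t\wedge\tau_R},X_{t\wedge\tau_R})$ for $t\le\tau_R$, we have
\begin{align*}
\sup_{t\le T}\lVert(\calO_t-\calO_{t\wedge\tau_R},X_t-X_{t\wedge\tau_R})\rVert
\le 2\,\mathds{1}_{\{\tau_R\le T\}}\sup_{t\le T}\lVert(\calO_t,X_t)\rVert ,
\end{align*}
where we used the triangle inequality for the cylindrical norm. Squaring, taking expectations and applying H\"older with exponents $(p,q)$, we get
\begin{align*}
\lVert (\calO,X)-(\calO^{\tau_R},X^{\tau_R})\rVert_{\Q}^2\le 4\,\Q(\tau_R\le T)^{1/q}\,\E\Big[\sup_{t\le T}\lVert(\calO_t,X_t)\rVert^{2p}\Big]^{1/p}.
\end{align*}

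The first key step is a uniform moment bound $\E[\sup_{t\le T}\lVert(\calO_t,X_t)\rVert^{2m}]\le C(1+\lVert(\mo,x)\rVert^{2m})$ for moderate $m$. For the measure component we use that $\lVert\delta_y\rVert^2=\sum_j g_j(y)^2\le 1$, since $\sum\lVert g_j\rVert_{\calC^1}^2\le1$. Hence
\begin{align*}
\lVert\calO_t\rVert\le\lVert\mo\rVert+\int_0^t\lambda(\calO_s,X_s)\,ds\le \lVert\mo\rVert+L\int_0^t\big(1+\lVert(\calO_s,X_s)\rVert\big)\,ds .
\end{align*}
For $X$ we apply Burkholder--Davis--Gundy to the stochastic integral together with the linear growth of $b,\sigma$ from \cref{asm:growthLip}. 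Gronwall's lemma applied to $t\mapsto \E[\sup_{s\le t}\lVert(\calO_s,X_s)\rVert^{2m}]$ then closes the estimate; this is standard and relies on the strong well-posedness from \cite[Lemma 3.2]{SonerTissotZhang}.

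The second step is the exit probability. By Markov's inequality,
\begin{align*}
\Q(\tau_R\le T)\le\Q\Big(\sup_{t\le T}\lVert(\calO_t,X_t)\rVert\ge R\Big)\le R^{-2m}\,\E\Big[\sup_{t\le T}\lVert(\calO_t,X_t)\rVert^{2m}\Big]\le C R^{-2m}\big(1+\lVert(\mo,x)\rVert^{2}\big)^{m}.
\end{align*}
Combining the two steps yields
\begin{align*}
\lVert\cdot\rVert_{\Q}\le C\,R^{-m/q}\,\big(1+\lVert(\mo,x)\rVert^2\big)^{m/(2q)+1/2}.
\end{align*}

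The specific exponents $0.35$ and $0.675$ come from a particular choice of H\"older exponents and moment order. With $p=q=2$ and $2m$ replaced by the second moment, one gets $R^{-1/2}$ in the probability factor. The stated values (for instance $0.675=1/2+0.175$, $0.35=2\cdot0.175$) suggest something like $m/q=0.35$ with the moment factor contributing $0.5$ plus half of that. I would therefore tune $(m,p,q)$ to match, for example $q$ close to $1$ with $m=0.35q$, which requires only $2p$-th moments for a suitably chosen $p$. Any such choice, after possibly weakening the exponent, gives the claimed bound. The main obstacle is ensuring that the higher-moment Gronwall estimate for the coupled measure--state system holds with constants depending only on $T$ and $L$, and handling $2p>2$ via BDG; the rest is bookkeeping of exponents.
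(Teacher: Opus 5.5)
Your argument is correct, and it reaches the stated bound by a different, more elementary route than the paper.

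\textbf{The paper's route.} It restarts the OSDE at $\tau_R$ via the strong Markov property, from a state of norm about $R$. A Gr\"onwall argument then bounds the post-exit increment in mean square by $CR^2\big((T-\tau_R)^+ + ((T-\tau_R)^+)^2\big)$. The exit probability comes from its Lenglart-type lemma: the supermartingale $e^{-\iota t}Y_t^3$ combined with the Gy\"ongy--Krylov inequality gives $\Q(\tau_R\le T)\lesssim R^{-3\gamma}(1+\lVert(\mo,x)\rVert^2)^{1.5\gamma}$. With $\gamma=0.9$ this yields $R^{2}\cdot R^{-2.7}=R^{-0.7}$ for the squared norm.

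\textbf{Your route.} You bound the increment crudely by $2\,\mathds{1}_{\{\tau_R\le T\}}\sup_{t\le T}\lVert(\calO_t,X_t)\rVert$. You then combine H\"older, BDG--Gr\"onwall moment bounds and Markov's inequality. Two routine points are implicit but fine:
\begin{itemize}
\item $\Q(\tau_R\le T)\le\Q(\sup_{t\le T}\lVert(\calO_t,X_t)\rVert\ge R)$ uses continuity of $t\mapsto\lVert\calO_t\rVert$, which follows from $\lVert\calO_t-\calO_s\rVert\le\int_s^t\lambda\,du$.
\item The Gr\"onwall step should be run on a localized process and passed to the limit by Fatou.
\end{itemize}
Your exponent bookkeeping is right. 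Taking $m/q=0.35$ exactly, e.g.\ $p=q=2$ and $2m=1.4$ (the latter moment follows from the second moment by Jensen), gives $\lVert\cdot\rVert_{\Q}^2\le CR^{-0.7}(1+\lVert(\mo,x)\rVert^2)^{1.35}$. This is the same final inequality as the paper, so no weakening is needed. Note that a larger $m/q$ would not imply the stated bound, because the power on $1+\lVert(\mo,x)\rVert^2$ grows with it.

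\textbf{What each buys.} Your route is more flexible: any polynomial rate $R^{-\alpha}$ is available, at the price of the power $\alpha/2+1/2$ on the initial data. The paper's cubic Lyapunov function caps the rate at $R^{1-3\gamma/2}$ in norm, i.e.\ essentially $R^{-1/2}$. The paper's route needs only fractional moments $\E[Y_0^{3\gamma}]$ of the initial state and no high-order BDG. That would matter for random initial conditions with few moments, but for the deterministic $(\mo,x)$ here your approach is simpler and at least as strong.
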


\begin{remark}\label{rmk:truncation}
\begin{itemize}
    \item[(i)]  If we further impose in  Assumption~\ref{asm:growthLip} that $|\lambda(\mo,x)| \leq L(1+\lVert \mo \rVert)$ uniformly in $x$, then $\calO$ is bounded over $[0,T]$ by Gr\"{o}nwall inequality. Using the same arguments as in Theorem~\ref{thm:convergence}, one can prove that \eqref{eq:truncationerror} holds as well, 
where the constant $C$  depends on  $d$,  $T$, Lipschitz constants  of $b,\sigma, \lambda$, the $L^{\infty}$ norm of $\lambda$, but \emph{not} on the radius $R$. Choosing $R=K^{20/7}$ in Proposition~\ref{prop:exitbound} leads to 
\begin{align*}
    \lVert (\calO,X)-( \mo^K(Z^{K,\tau^K_R}) ,X^{K,\tau^K_R}) \rVert_{\Q} \leq \frac{C (1+\lVert(\mo,x) \rVert^2)^{0.675}}{K}. 
\end{align*}
\item[(ii)] Combining Proposition~\ref{prop:exitbound} with  Theorem~\ref{thm:convergence}, we get a convergence that depends on $K$ only. In \eqref{eq:convergencerate}, setting $R=\frac{1}{C} \log K$ yields 
\begin{align*}
    \lVert (\calO,X)-( \mo^K(Z^{K,\tau^K_R}) ,X^{K,\tau^K_R}) \rVert_{\Q} \leq \frac{C (1+|x|^2+\lVert \mo \rVert^2)^{0.675}}{(\log K)^{0.35}}. 
\end{align*}

\end{itemize}
\end{remark}

\subsection{Lenglart's Inequality Revisited}\label{sec:Lenglart} 

Toward the proof of \cref{prop:exitbound},  we establish a variant of Lenglart's inequality \cite{Lenglart1977}, which   provides probabilistic bounds on  the exit time $\tau_R$ defined in \eqref{eq:exitTime}. 

\begin{lemma}\label{lem:stopP}
  Under Assumption~\ref{asm:growthLip}, for any $\gamma \in (0,1)$, $R>2$,
    \begin{align}\label{eq:lenglart}
       \Q( \tau_R \leq T)  \leq \frac{ 4 e^{\iota \gamma T} (2-\gamma) \E^{\Q}[(1+\lVert(\mo ,x)\rVert|)^{3\gamma}]}{R^{3 \gamma}(1-\gamma) },
    \end{align}
    where $\iota$ is a constant that only depends on $b,\sigma$. 
\end{lemma}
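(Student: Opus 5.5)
We apply a Lenglart-type domination argument to the concave power of a Lyapunov functional of the state. Set $V_t := 1+\lVert(\calO_t,X_t)\rVert^2 = 1 + |X_t|^2 + \sum_j |\calO_t(g_j)|^2$. The first step is to compute its dynamics with Itô's formula. For the occupation part, $d\calO_t(g_j) = g_j(X_t)\lambda(\calO_t,X_t)\,dt$ has finite variation, so
\[
d\lVert\calO_t\rVert^2 = 2\sum_j \calO_t(g_j) g_j(X_t)\lambda\,dt,
\]
and this is bounded by $2\lVert \calO_t\rVert\,\lambda \le C V_t\,dt$. Here we use Cauchy--Schwarz with $\sum_j \lVert g_j\rVert_{\calC^1}^2\le 1$ and the linear growth of $\lambda$ from Assumption~\ref{asm:growthLip}. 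The $X$ part gives $2X_t\cdot b\,dt + |\sigma|^2dt + 2X_t^\top\sigma\,dW_t$, whose drift is again at most $C V_t$. Hence $dV_t = \mu_t\,dt + dM_t$ with $\mu_t\le \iota V_t$, $d\langle M\rangle_t \le C V_t^2\,dt$, and $\iota$ depending only on the growth constants.

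Next we pass to a concave power. Since $p:=3\gamma/2$ could exceed $1$, we instead work with $U_t := e^{-\iota' t}V_t^{q}$ for an exponent $q\le 1$ chosen so that the Itô correction $\tfrac{q(q-1)}2 V^{q-2}d\langle M\rangle$ is nonpositive. The drift term $qV^{q-1}\mu$ is then absorbed by the factor $e^{-\iota' t}$, which makes $U$ a nonnegative local supermartingale up to $\tau_R$. This step produces the factor $e^{\iota\gamma T}$ once $\iota'$ is proportional to $\gamma$.

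Then we apply a Lenglart/Doob maximal inequality. On $\{\tau_R\le T\}$ we have $V_{\tau_R}\ge R^2$ by continuity of $(\calO,X)$ in the cylindrical norm, so $\sup_{t\le T}U_{t\wedge\tau_R}\ge e^{-\iota' T}R^{2q}$. The maximal inequality for nonnegative supermartingales gives $\Q(\tau_R\le T)\le e^{\iota' T}\E[U_0]/R^{2q}$. To match the exact form $R^{3\gamma}$ with the constant $(2-\gamma)/(1-\gamma)$, we apply the Lenglart variant: if a nonnegative process is dominated by a predictable increasing process $A$, then for $\gamma\in(0,1)$,
\[
\E\Big[\sup_{t\le T} Y_t^\gamma\Big]\le \frac{2-\gamma}{1-\gamma}\,\E\big[A_T^\gamma\big].
\]
We use this with $Y$ the martingale-part supremum and $A$ the Grönwall-controlled drift. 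We then combine it with Markov's inequality at level $R^{3\gamma}$, where the exponent $3\gamma$ comes from taking $Y=V^{3/2}$. The factor $4$ absorbs the constants from $(a+b)^\gamma$ and from $1+\lVert\cdot\rVert$ versus $V^{1/2}$.

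The main obstacle is the domination step. The occupation component is unbounded and its drift is only linearly controlled, so the domination inequality $\E[Y_\rho]\le\E[A_\rho]$ must hold at all bounded stopping times $\rho\le\tau_R$ with $A$ predictable. I would first localize at $\tau_R$, where every quantity is bounded by $R$ and the stochastic integrals are true martingales. After that I would apply the Grönwall inequality to $\E[V_{\rho}]$, which yields the exponential factor, and only then invoke the Lenglart variant.
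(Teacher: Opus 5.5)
Your Steps 1--3 are correct, but they prove the lemma only for $\gamma\le 2/3$. Because you restrict to $q\le 1$, the maximal inequality for the nonnegative supermartingale $U_t=e^{-\iota' t}V_t^{q}$ yields $\Q(\tau_R\le T)\le e^{\iota' T}V_0^{q}R^{-2q}$. The fastest decay available is therefore $R^{-2}$, whereas the statement requires $R^{-3\gamma}$. For $\gamma\in(2/3,1)$, including $\gamma=0.9$ used in Proposition~\ref{prop:exitbound}, this is out of reach.

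The Lenglart step does not close the gap. You take the domination from Gr\"onwall applied to $\E[V_\rho]$, but that dominates $V$, not $V^{3/2}$. Jensen goes the wrong way here ($\E[V_\rho^{3/2}]\ge \E[V_\rho]^{3/2}$), so Lenglart plus Markov applied to $V$ only gives decay $R^{-2\gamma'}$ with $\gamma'<1$. To run Lenglart with $Y=V^{3/2}$ you need $\E[V_\rho^{3/2}]\le C\,V_0^{3/2}$ at bounded stopping times. That is exactly the It\^o computation for a power above $1$ that you set out to avoid. Also, ``$Y$ the martingale-part supremum'' is not a valid input: in Lenglart's inequality the dominated process is the nonnegative process itself, and the dominating process must be increasing and predictable.

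The missing observation is that the restriction $q\le 1$ is unnecessary. For any $q>0$, the bound $d\langle M\rangle_t\le C V_t^2\,dt$ gives $\tfrac{|q(q-1)|}{2}V_t^{q-2}\,d\langle M\rangle_t\le \tfrac{q|q-1|}{2}C\,V_t^{q}\,dt$. So the It\^o correction, even when positive, is of the same order as $qV^{q-1}\mu_t$ and is absorbed by the exponential discount.

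This is the paper's route. It shows that $e^{-\iota t}Y_t^3$ with $Y=\sqrt{V}$, i.e.\ $e^{-\iota t}V_t^{3/2}$, is a nonnegative local, hence true, supermartingale. It then applies the Lenglart-type inequality of Gy\"ongy--Krylov with the constant dominating process $g_t=Z_0$ to obtain $\E[\sup_{t\le T}Y_t^{3\gamma}]\le e^{\iota\gamma T}\frac{2-\gamma}{1-\gamma}Y_0^{3\gamma}$, and finishes with Markov's inequality at level $R^{3\gamma}$.

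Once you allow $q=3\gamma/2\in(0,3/2)$, your own Step 3 is even shorter. The maximal inequality for $e^{-\iota' t}V_t^{3\gamma/2}$, with $\iota'$ proportional to $\gamma$, gives $\Q(\tau_R\le T)\le e^{\iota' T}(1+\lVert(\mo,x)\rVert^2)^{3\gamma/2}R^{-3\gamma}$. This already implies the stated bound, because $(1+\lVert(\mo,x)\rVert^2)^{1/2}\le 1+\lVert(\mo,x)\rVert$ and $4(2-\gamma)/(1-\gamma)>1$. Since the lemma is only an upper bound, you do not need Lenglart to reproduce its exact constant.
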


\begin{proof}
    See Appendix~\ref{app:Lemma}.
\end{proof}

As can be seen in \eqref{eq:lenglart}, the bound decays polynomially in the radius $R$, while it grows exponentially with the time horizon $T$. Since both rates depend on $\gamma$, the upper bound may be optimized with respect to this parameter. 
In the proof of Proposition~\ref{prop:exitbound} below, we choose $\gamma = 0.9$, which yields a sufficiently tight upper bound for our purposes.


\begin{proof}[Proof of \cref{prop:exitbound}]
Let $\bar X=X-X^{\tau_R}$ and $\bar \calO=\calO-\calO^{\tau_R}$. In view of the definitions of $(\calO^{\tau_R}, X^{\tau_R})$ and $\lVert \cdot \rVert_{\Q} $, we have  
\begin{align*}
      \lVert (\calO,X)-(\calO^{\tau_R}, X^{\tau_R}) \rVert^2_{\Q} = \E^{\Q} \left[ \sup_{\tau_R \leq t \leq T} \lVert(\bar \calO_t,\bar X_t) \rVert^2  \right]. 
\end{align*}
Using the strong Markov property of \eqref{eq:OSDE} and conditioning on $\tau_R=t_0$, $(\calO_{t_0},X_{t_0})=(x,\mo)$ with $\lVert (x,\mo) \rVert^2=R^2-1$, then $(\calO_t,X_t)_{t\ge t_0}$  solves the OSDE
\begin{align*}
    \begin{cases}
        \calO_t=\mo+ \int_{t_0}^t \delta_{X_s} \lambda(\calO_s,X_s) \,ds,  \\[0.25em]
        X_t=x+\int^t_{t_0} b(\calO_s, X_s ) \,dt + \int_{t_0}^t \sigma (\calO_s, X_s) \, dW_s.
    \end{cases}
\end{align*}
It is then straightforward to show that 
\begin{align*}
    |\bar X_t|^2 \leq 2 (t-t_0) \int_{t_0}^t b^2(\bar \calO_s+\mo, \bar X_s+x) \,ds +2 |M_t|^2, 
\end{align*}
with the martingale  $M_t:=\int_{t_0}^t \sigma (\bar \mo + \calO_s, x + \bar X_s) \, dW_s$. From the linear growth of $b$,
\begin{align*}
    |\bar X_t|^2 \leq 2 (t-t_0) L^2 \int_{t_0}^t \left ( R^2+ \sup_{t_0 \leq u \leq s} \lVert(\bar \calO_u,\bar X_u) \rVert^2 \right) \,ds +2 |M_t|^2.
\end{align*}
Taking supremum over $t$ on both sides and applying Doob's martingale inequality, 
\begin{align*}
    \E^{\Q} \left[\sup_{t_0 \leq s \leq t} |\bar X_s|^2 \right] &\leq  [2 (t-t_0) + 8] L^2 \int_{t_0}^t \left(R^2 + \E^{\Q}\left[\sup_{t_0 \leq u \leq s} \lVert(\bar \calO_u,\bar X_u) \rVert^2 \right]  \right) ds.
\end{align*}
Using the definition of cylindrical norm, 
\begin{align*}
       \lVert \bar \calO_s \rVert^2 &=   \sum_{j=0}^{\infty} \left(\int_{t_0}^s g_j(X_u) \lambda (\calO_u,X_u) \, du\right)^2 \\[0.5em]
       &\leq  \sum_{j=0}^{\infty} \left(\int_{t_0}^s |g_j(X_u)|^2 \, du \right) \left(\int_{t_0}^s  \lambda(\calO_u, X_u)^2 \, du \right) \\[0.5em]
      &\leq    \left(\int_{t_0}^s \sum_{j=0}^{\infty} |g_j(X_u)|^2 \, du \right) \left(\int_{t_0}^s  \lambda(\calO_u, X_u)^2 \, du \right) \\[0.5em]
       &= (s-t_0) \int_{t_0}^s  \lambda(\calO_u, X_u)^2 \, du, 
\end{align*}
recalling that $\sum_{j=0}^{\infty} \lVert g_j \rVert^2_{\calC^1} \leq 1$,
and hence
\begin{align*}
    \E^{\Q} \left[\sup_{t_0 \leq s \leq t} \lVert \bar \calO_s \rVert^2 \right]  \leq (t-t_0) L^2 \int_{t_0}^t \left(R^2 + \E^{\Q}\left[\sup_{t_0 \leq u \leq s}  \lVert(\bar \calO_u,\bar X_u) \rVert^2 \right]  \right) ds.
\end{align*}
We therefore obtain the inequality 
\begin{align*}
    \E^{\Q} \left[\sup_{t_0 \leq s \leq t} \left( |\bar X_s|^2+\lVert \bar \calO_s \rVert^2 \right)  \right] \leq L^2 (3(t-t_0)+8)\int_{t_0}^t \left(R^2 + \E^{\Q}\left[\sup_{t_0 \leq u \leq s}  \lVert(\bar \calO_u,\bar X_u) \rVert^2 \right]  \right) ds.
\end{align*}
Hence by Gr\"{o}nwall inequality, with some positive constant $C$ depending on $L$ and $T$
\begin{align*}
     \E^{\Q} \left[\sup_{t_0 \leq s \leq t} \left( |\bar X_s|^2+\lVert \bar \calO_s \rVert^2 \right)  \right] &\leq  8L^2 \left((t-t_0)^2+(t-t_0)\right) R^2 \exp\left(8L^2 \left((t-t_0)^2+(t-t_0)\right) \right) \\
     &\leq C \left((t-t_0)^2+(t-t_0)\right) R^2. 
\end{align*}
Thus, we obtain that 
\begin{align*}
  \lVert (X,\calO)-(\calO^{\tau_R},X^{\tau_R}) \rVert_{\Q}^2 \leq CR^2 \E^{\Q} \left[ ((T-\tau_R)^+)^2 +(T-\tau_R)^+ \right].
\end{align*}
In view of  Lemma~\ref{lem:stopP}, 
\begin{align*}
\E^{\Q}[( (T-\tau_R)^+)^2+(T-\tau_R)^+] &=  \int_0^T (2u+1) \Q[\tau_R \leq T-u] \, du  \\[0.5em]
&\leq     \frac{ (T^2+1) e^{\iota \gamma T} (2-\gamma)  (1+|X_0|^2+\lVert \calO_0 \rVert^2)^{1.5\gamma}}{R^{3 \gamma}(1-\gamma) },
\end{align*}
and  setting $\gamma=0.9$ finally leads to 
$$  \lVert (X,\calO)-(\calO^{\tau_R},X^{\tau_R}) \rVert_{\Q}^2 \leq  \frac{C (1+|x|^2+\lVert \mo \rVert^2)^{1.35}}{R^{0.7}}.$$
\end{proof}

\begin{remark}
Using the same arguments as in the proof above, we also have 
\begin{align*}
    \lVert (\mo(Z^K),X^K)-(\mo(Z^{K,\tau^K_R}),X^{K,\tau^K_R}) \rVert_{\Q} \leq  \frac{C (1+\lVert (\mo,x) \rVert^2)^{0.675}}{R^{0.35}}.
\end{align*}
\end{remark}

\subsection{Proof of Theorem~\ref{thm:convergence}}\label{sec:proofMain}
We now prove our main result, which we decompose into the following steps. 
\begin{proof}
\underline{\textit{Step 1.}} Recall that $Z^K_0=\mo(\bm{f}^K)$. Let us estimate the difference of initial positions 
\begin{align*}
    \lVert \mo-\mo^K(Z^K_0) \rVert^2 &=\sum_{j=0}^{\infty} |\mo(g_j)- \mo^K(Z_0^K)(g_j)|^2 \\
    &=\sum_{j=0}^{\infty} \left|\mo(g_j)- \sum_{k=1}^{N_K} g_j(x^K_k) \mo(f^K_k) \right|^2,
\end{align*}
where we used that $\text{Supp}(\mo) \subset B_R$ and thus $\mo(f^K_0)=0$. Notice that 
\begin{align*}
    \Big |\mo(g_j)- \sum_{k=1}^{N_K} g_j(x^K_k) \mo(f^K_k) \Big|& \leq \Big|\sum_{k=1}^{N_K} \left(\int |g_j(x)-g_j(x^K_k)| f^K_k(x) \, d \mo(x) \right) \Big| \leq \frac{1}{K} ||g_j'||_{\infty}.
\end{align*}
Since $\sum_{j=0}^{\infty} ||g_j||_{\calC^1}^2 \leq 1$, we obtain that  $\lVert \mo-\mo^K(Z^K_0) \rVert^2 \leq 1/K^2$.

\bigskip
\noindent\underline{\textit{Step 2.}} For the simplicity of notation, in the rest we omit the stopping time $\tau_R,\tau_R^K$ in the superscript of $X^{\tau_R},\calO^{\tau_R},X^{K,\tau^K_R},  Z^{K,\tau^K_R} $.
 Let us denote $\theta_t=\calO_t- \mo^K(Z_t^{K})$, $\varepsilon_t=X_t-X^K_t$, and then 
\begin{align}
    \frac{d}{dt} \frac{\lVert \theta_t \rVert^2}{2}  &= \sum_{j=0}^{\infty}  \theta_t(g_j) \int g_j(x) \,\frac{\theta_t(dx)}{dt} \notag \\
    &= \sum_{j=0}^{\infty}  \theta_t(g_j)g_j(X_t)\lambda(\calO_t,X_t) \notag \\
    &- \sum_{j=0}^{\infty} \theta_t(g_j) \left( \sum_{k=0}^{N_K} g_j(x_k^K){f}_k^K(X_t^K) \right)\lambda(\mo^K(Z_t^K), X_t^K) \notag \\
    &= \sum_{j=0}^{\infty}  \theta_t(g_j)(g_j(X_t)-g_j(X_t^K))\lambda(\calO_t,X_t) \tag{I} \label{eq:I}\\
     &+ \sum_{j=0}^{\infty} \theta_t(g_j) \left(g_j(X_t^K)- \sum_{k=0}^{N_K} g_j(x_k^K){f}_k^K(X_t^K) \right)\lambda(\calO_t,X_t) \tag{II} \label{eq:II} \\
     &+  \sum_{j=0}^{\infty} \theta_t(g_j) \left( \sum_{k=0}^{N_K} g_j(x_k^K){f}_k^K(X_t^K) \right) (\lambda(\calO_t,X_k)- \lambda(\mo^K(Z_t^K), X_t^K)). \tag{III} \label{eq:III}
\end{align}
The term \eqref{eq:I} is bounded from above by 
\begin{align*}
  \sum_{j=0}^{\infty}  \left| \theta_t(g_j) \right| \lVert \nabla g_j \rVert_{\infty}|\varepsilon_t| |\lambda(\calO_t,X_t)| &\leq  |\varepsilon_t| |\lambda(\calO_t,X_t)| \sum_{j=0}^{\infty}  \left| \theta_t(g_j) \right| \lVert \nabla g_j \rVert_{\infty} \\
 &\leq |\varepsilon_t| |\lambda(\calO_t,X_t)| \lVert \theta_t \rVert .
\end{align*}
For any $ x \in B_R$, according to our construction of $f_k^K$, for any $x \in B_R$, $$\left| g_j(x)- \sum_{k=0}^{N_K} g_j(x_k^K)\bm{f}_k^K(x) \right|\leq \frac{\lVert \nabla g_j \rVert_{\infty}}{K}.$$ 
Keeping in mind that both $X_t$ and $X^{K}$ are stopped within the ball $B_R$,  the term of \eqref{eq:II} is bounded from above by 
\begin{align*}
     \sum_{j=0}^{\infty} |\theta_t(g_j)| \frac{\lVert \nabla g_j \rVert_{\infty}}{K}|\lambda(\calO_t, X_t)| \leq \frac{1}{K}|\lambda(\calO_t,X_t)| \lVert \theta_t\rVert .
\end{align*}
Using the Lipschitz property of $\lambda$, the term \eqref{eq:III} is bounded by
\begin{align*}
      L \sum_{j=0}^{\infty} |\theta_t(g_j)| \lVert g_j \rVert_{\infty}(\lVert \theta_t \rVert +|\varepsilon_t|) \leq L(\lVert \theta_t \rVert + |\varepsilon_t|) \lVert \theta_t \rVert.
\end{align*}
Together, we have the estimate 
\begin{align*}
    \frac{d}{dt}\lVert \theta_t \rVert^2 &\leq  2 | \lambda(\calO_t, X_t)| \left(\frac{1}{K}+|\varepsilon_t| \right) \lVert \theta_t \rVert + 2L(\lVert \theta_t \rVert +|\varepsilon_t|) \lVert \theta_t \rVert \\
    &\leq 2 LR \left(\frac{1}{K}+|\varepsilon_t| \right) \lVert \theta_t \rVert + 2L(\lVert \theta_t \rVert +|\varepsilon_t|) \lVert \theta_t \rVert,
\end{align*}
where we used the linear growth property of $\lambda$ and $\sqrt{1+|X_t|^2+||\calO||^2} \leq R$. From \textit{Step 1}, we have $||\theta_0||^2 \leq 1/K^2$. In view of   $\frac{2LR ||\theta_t||}{K}\leq LR||\theta_t||^2+\frac{LR}{K^2}$, we obtain that
\begin{align*}
 \lVert \theta_t \rVert^2  
  \leq  \frac{CR}{K^2} +CR  \int_0^t \lVert \theta_s \rVert^2+|\varepsilon_s|^2\, ds,
\end{align*}
and thus
\begin{align}\label{eq:thm1}
  \E^{\Q} \left[  \sup_{0 \leq s \leq t}\lVert \theta_s \rVert^2 \right] 
  \leq & \frac{CR}{K^2} +  CR\E^{\Q} \left[ \int_0^t  \sup_{0 \leq u \leq s} (\lVert \theta_u \rVert^2+ |\varepsilon_u|^2 )\, ds \right],
\end{align}
where $C$ is a positive constant only depending on the Lipschitz constants and $T$ that is allowed to change from line to line.

\bigskip

\noindent \underline{\textit{Step 3.}}  By definition of $\varepsilon_t$,
\begin{align*}
\frac{|\varepsilon_t|^2}{2} &\leq  t \int_0^t   \left(b(\calO_s,X_s)-b(\mo^K(Z^K_s), X^K_s)\right)^2dt   +   \left| \int_0^t \sigma(\calO_s,X_s)-\sigma(\mo^K(Z^K_s),X^K_s) \, dW_s \right|^2.
\end{align*}
Taking supremum over time, using Doob's martingale inequality and Lipschitz property of $b,\sigma$, we obtain that 
\begin{align*}
    \E^{\Q} \left[\sup_{0\leq s  \leq t} |\varepsilon_s |^2 \right] \leq C  \E^{\Q} \left[ \int_0^t  \sup_{0 \leq u \leq s} (\lVert \theta_u \rVert^2+ |\varepsilon_u|^2 )\, ds \right],
\end{align*}
Together with \eqref{eq:thm1}, by Gr\"{o}nwall inequality we conclude the result
\begin{align}\label{eq:convergencerate}
    \E^{\Q}\left[ \sup_{0 \leq s \leq T } (\varepsilon_s^2+ \lVert \theta_s \rVert_s^2 )\right] \leq \frac{CR}{K^2} \exp(CR). 
\end{align}
\end{proof}

\section{ Numerical Examples } 
\label{sec:numerical}

This section gathers examples of occupied SDEs, ranging from self-interacting diffusions to path-dependent volatility model in finance. We also verify numerically the  convergence rates established in \cref{thm:convergence}. 

\subsection{Cranston-Le Jan's Self-interacting  Diffusion} \label{sec:CranstonLeJan}
 
  Consider the  self-attracting diffusion introduced by 
    \citet{CranstonLeJan}, 
\begin{equation}\label{eq:SelfAttracting}
   dX_t = \beta\bigg(\int_0^t(X_s - X_t)ds\bigg) dt + dW_t, \quad X_0 =x, \quad \beta > 0. 
\end{equation}
Then \eqref{eq:SelfAttracting} corresponds to the occupied SDE, 
\begin{align}
\begin{cases}\label{eq:OSDEExplicit}
 d\calO_t= \delta_{X_t}\, dt, & \\[0.25em]
     \, dX_t = b(\calO_t,X_t) \, dt +  dW_t,  & b(\mo,x) = \beta \int_{\R}(y-x)\mo(dy).
  \end{cases}
 \end{align} 
Rewriting the drift in \eqref{eq:SelfAttracting} as $\beta t(\bar{X}_t - X_t)$, $\overline{X}_t = \frac{1}{t}\int_0^tX_s ds$, we gather that the process is attracted to its running average $\overline{X}$ when $\beta >0$,  and repelled from it otherwise; see \cref{fig:simulationsCranstonLeJan}. 
As shown in \cite{CranstonLeJan}, \eqref{eq:SelfAttracting} admits an explicit strong solution whose proof is reported here for completeness. 

\begin{figure}[H]
    \centering
     \caption{Sample paths $X$ and drift  from the Cranston-Le Jan diffusion \eqref{eq:SelfAttracting}. Dotted line is the running average of $X$ (i.e., the barycenter of $\calO$). }
   
    \label{fig:simulationsCranstonLeJan}

\begin{subfigure}[b]{0.49\textwidth}
        \centering
       \caption{$\beta = -5$.}
    \includegraphics[width=0.92\linewidth]{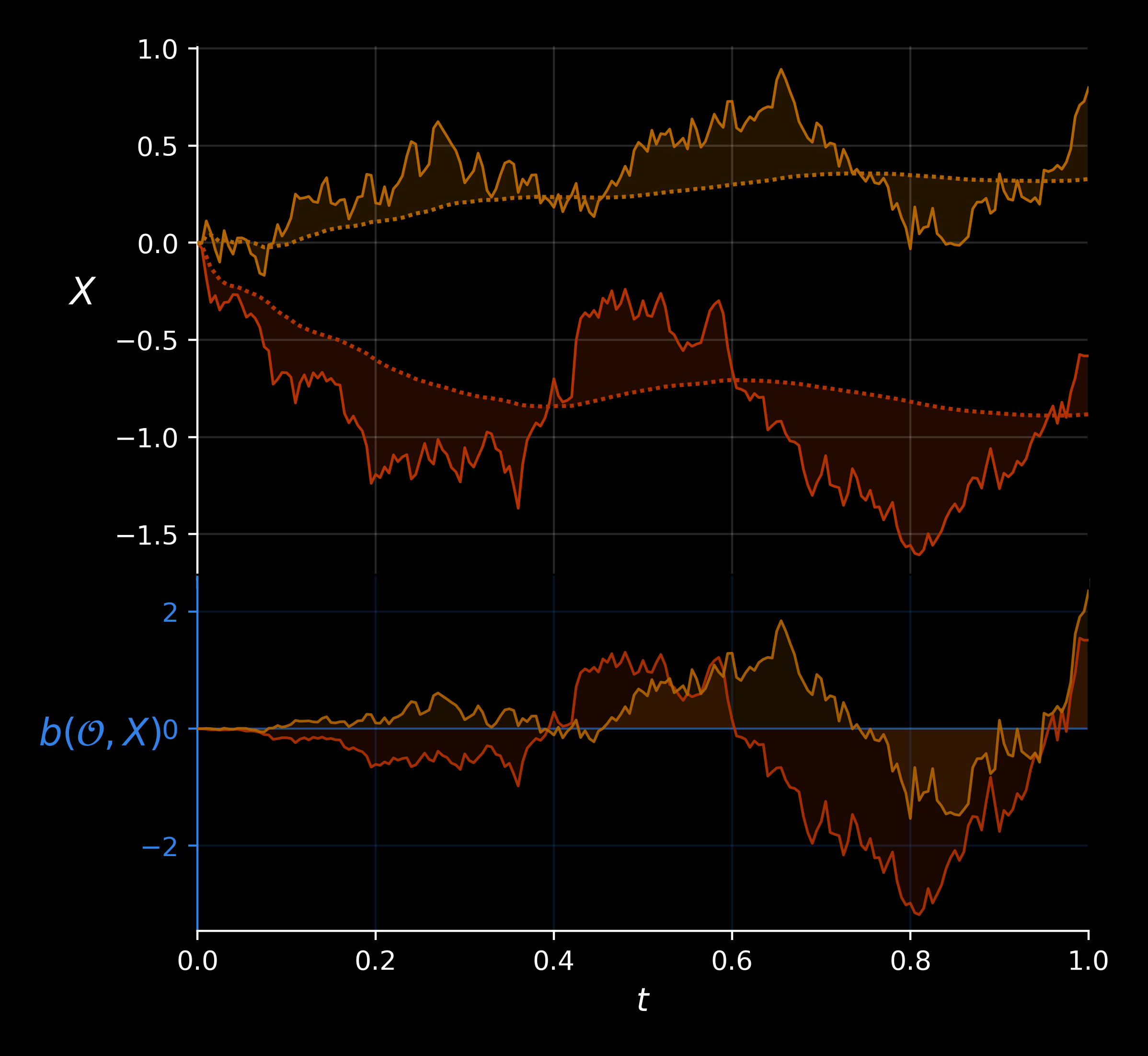}
    \label{fig:sim3}
\end{subfigure}
\begin{subfigure}[b]{0.49\textwidth}
 \centering
\caption{$\beta = 5$.}
    \includegraphics[width=0.9\linewidth]{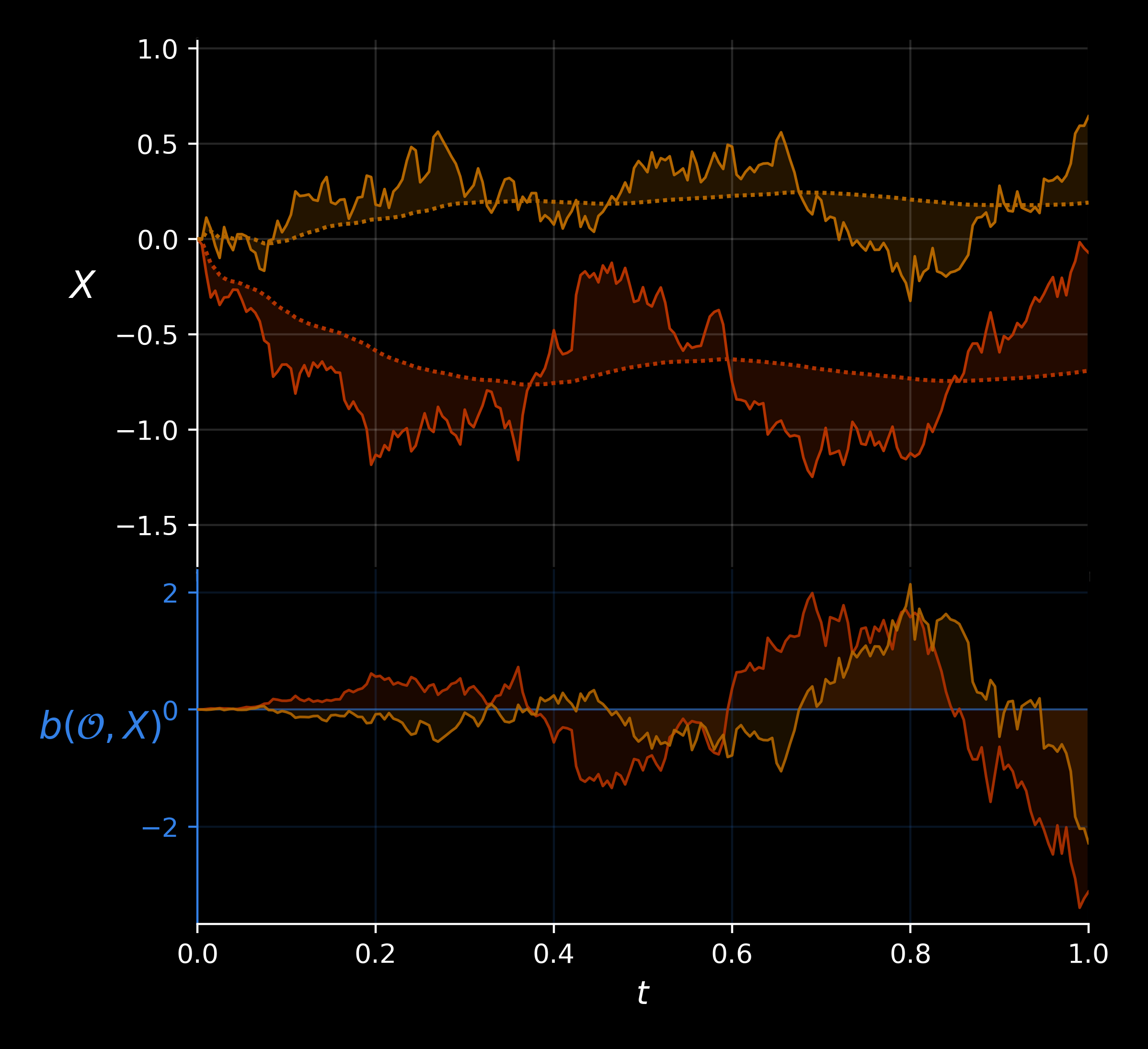}
    \label{fig:sim6}
\end{subfigure}
\end{figure}

\begin{proposition}\label{prop:cranstonExact}
  Cranston-Le Jan's diffusion \eqref{eq:SelfAttracting} admits a unique strong solution given by $X_t = x+ \int_0^t \kappa(t,s) dW_s,$ with the  Volterra kernel
  \begin{equation}\label{eq:Volterra}
      \kappa(t,s) = 1 -\beta s\int_s^t e^{\frac{\beta}{2}(s^2-u^2)}du, \quad t\ge s. 
  \end{equation}
\end{proposition}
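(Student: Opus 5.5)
The plan is to reduce \eqref{eq:SelfAttracting} to a linear SDE and solve it explicitly. Set $Y_t := \int_0^t X_s\,ds$, so that $\int_0^t (X_s - X_t)\,ds = Y_t - tX_t$. Then $X$ satisfies the linear SDE
$$dX_t = \beta(Y_t - tX_t)\,dt + dW_t.$$
It has globally Lipschitz, time-dependent coefficients on $[0,T]$. Hence the two-dimensional system for $(Y,X)$, with $dY_t = X_t\,dt$, has a unique strong solution by the classical theory. Equivalently, uniqueness follows from \cite[Lemma 3.2]{SonerTissotZhang} via \eqref{eq:OSDEExplicit}. This settles existence and uniqueness, and it remains to identify the solution.

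Next I would introduce the auxiliary quantity $V_t := Y_t - tX_t$, which is exactly the drift integrand. By Itô's product rule,
$$dV_t = X_t\,dt - X_t\,dt - t\,dX_t = -t\,dX_t = -\beta t V_t\,dt - t\,dW_t .$$
This is a scalar linear (Ornstein--Uhlenbeck type) SDE with $V_0 = 0$. Using the integrating factor $e^{\beta t^2/2}$, its solution is
$$V_t = -\int_0^t s\, e^{\frac{\beta}{2}(s^2 - t^2)}\,dW_s .$$

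I then plug this into $X_t = x + \beta\int_0^t V_u\,du + W_t$ and apply stochastic Fubini. This is legitimate because the integrand is deterministic, bounded, and continuous on the compact set $\{0\le s\le u\le t\}$. It gives
$$\int_0^t V_u\,du = -\int_0^t s\left(\int_s^t e^{\frac{\beta}{2}(s^2-u^2)}du\right) dW_s .$$
Hence $X_t = x + \int_0^t \kappa(t,s)\,dW_s$ with $\kappa$ as in \eqref{eq:Volterra}.

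The only potentially delicate step is justifying the Fubini interchange, and the boundedness of the kernel settles it directly. As a sanity check, I would also verify directly by differentiating in $t$ that the proposed $X$ satisfies \eqref{eq:SelfAttracting}. This uses $\partial_t \kappa(t,s) = -\beta s\, e^{\frac{\beta}{2}(s^2-t^2)}$ and $\kappa(t,t) = 1$.
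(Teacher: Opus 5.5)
Your argument is correct and is essentially the paper's proof. Your $V_t=\int_0^t(X_s-X_t)\,ds$ is just $-Z_t$ for the paper's auxiliary process, and you solve it with the same integrating factor $e^{\beta t^2/2}$ followed by stochastic Fubini; your linear-system argument for $(Y,X)$ usefully supplies the uniqueness the paper leaves implicit. However, drop the alternative appeal to \cite[Lemma 3.2]{SonerTissotZhang}: $b(\mo,x)=\beta\int(y-x)\,\mo(dy)$ is not even defined on all of $\calM$, and it is neither of linear growth nor Lipschitz for the cylindrical norm (compare $\delta_y$ and $\delta_{y'}$ with $|y-y'|\to\infty$), so Assumption~\ref{asm:growthLip} fails here.
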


\begin{proof} See Appendix~\ref{app:Volterra}. 
\end{proof}
Observe that  $X$ is a Gaussian Volterra process with constant mean function and covariance kernel $\C^{\Q}(X_t,X_s) = \int_0^{s\wedge t} \kappa(t,u)\kappa(s,u)du$. The Volterra and covariance kernels are displayed in \cref{fig:Kernels}. 
\begin{figure}
    \centering
     \caption{Volterra  and covariance kernel  of the Cranston-Le Jan diffusion \eqref{eq:SelfAttracting}, $\beta =1$.}
   
    \label{fig:Kernels}

\begin{subfigure}[b]{0.49\textwidth}
        \centering
       \caption{Volterra kernel $\kappa(t,s)$, $t\ge s$.} 
    \includegraphics[width=0.92\linewidth]{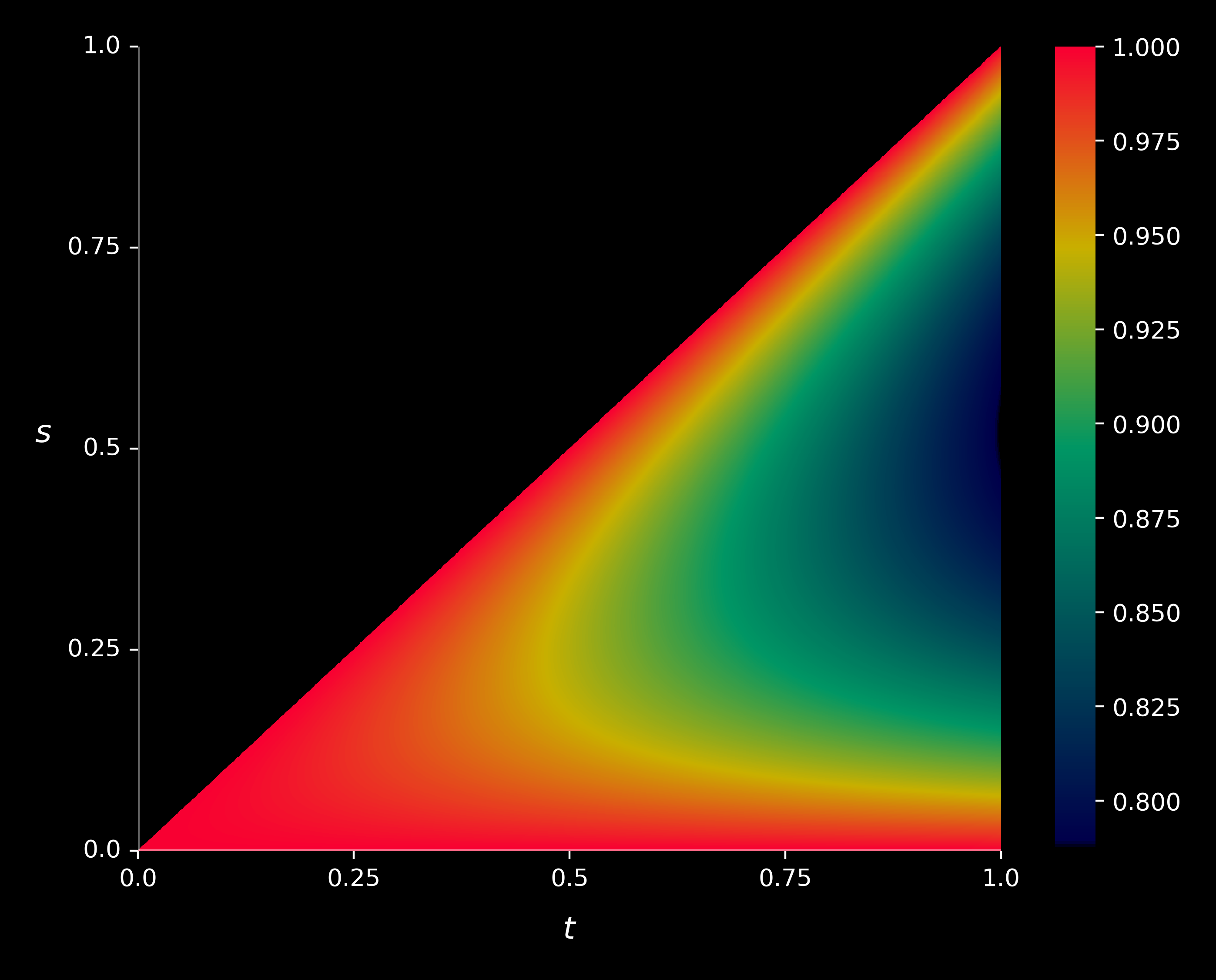}
    \label{fig:volterraKernel}
\end{subfigure}
\begin{subfigure}[b]{0.49\textwidth}
 \centering
 \caption{Covariance kernel $\int_0^{t\wedge s} \kappa(t,u)\kappa(s,u)du$.}
    \includegraphics[width=0.9\linewidth]{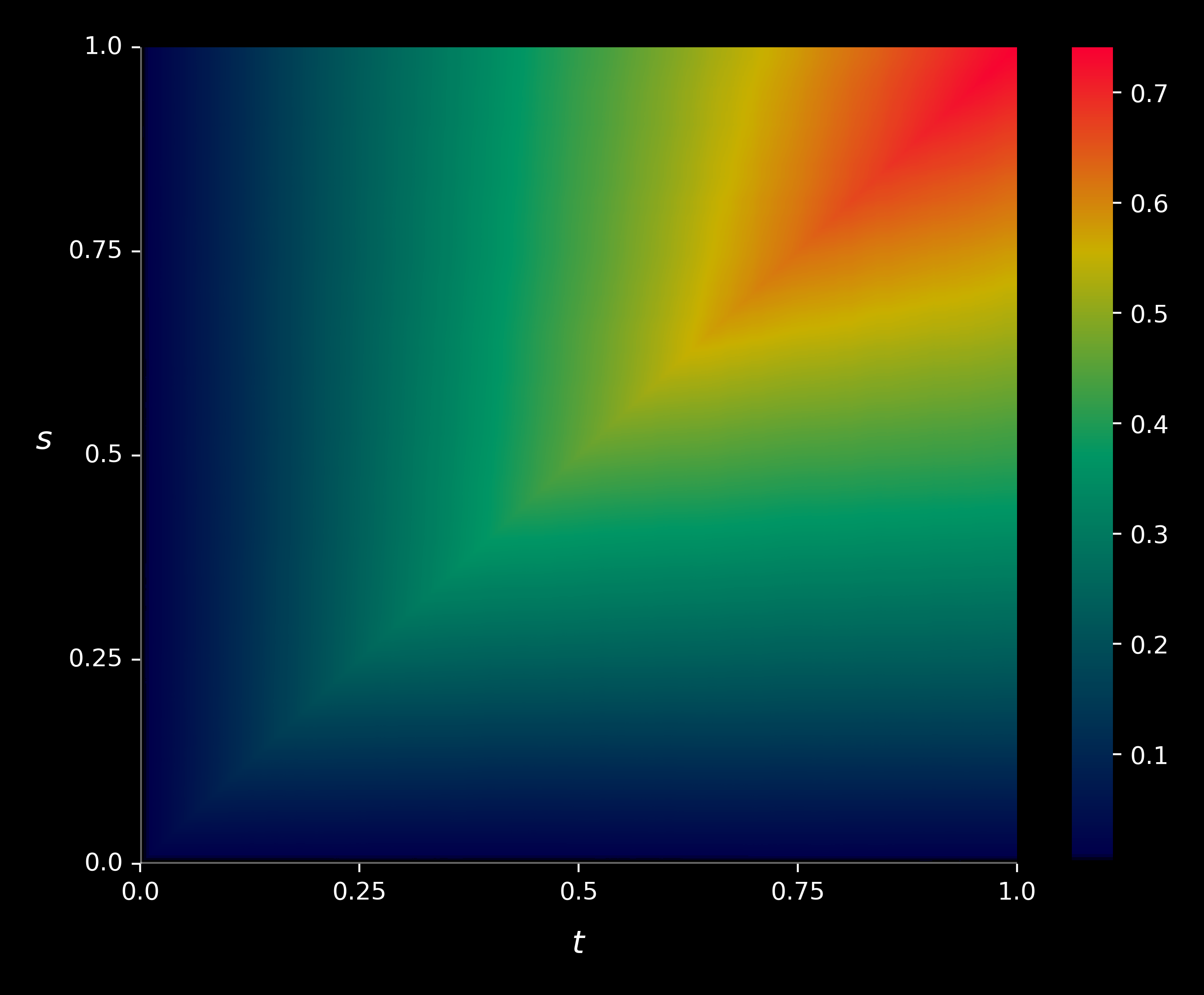}
    \label{fig:covarianceKernel}
\end{subfigure}
\end{figure}
Next, let us verify  \cref{thm:convergence} by  numerically comparing projected diffusions to the exact solution from \cref{prop:cranstonExact}. We use $\beta = 5$, $T=1$, $2^9$ time steps, and $2^{13}$ simulations, and choose $f_k^K = \mathds{1}_{A_k^K}$, where $(A_k^K)_{k=1}^K$ form a regular partition   of the compact interval $[-R,R]$, $R = 2$, and $A_0^K = \R \setminus [-R,R]$. The results are shown in \cref{fig:pathwiseCranstonLeJan,fig:pathwiseErrorCranstonLeJan,fig:convergenceCranstonLeJan}.   The error in \eqref{eq:truncationerror}  is split into its spatial and occupation components, both of which demonstrate a nearly linear convergence rate. 

\begin{figure}[H]
    \centering
    \caption{Sample path from  Cranston-Le Jan's diffusion ($\beta = 5$) for increasing values of $K$. }
   
    \label{fig:pathwiseCranstonLeJan}

\begin{subfigure}[b]{0.495\textwidth}
        \centering
       \caption{Full horizon $[0,1]$.}
    \includegraphics[width=0.92\linewidth,height = 2.2in]{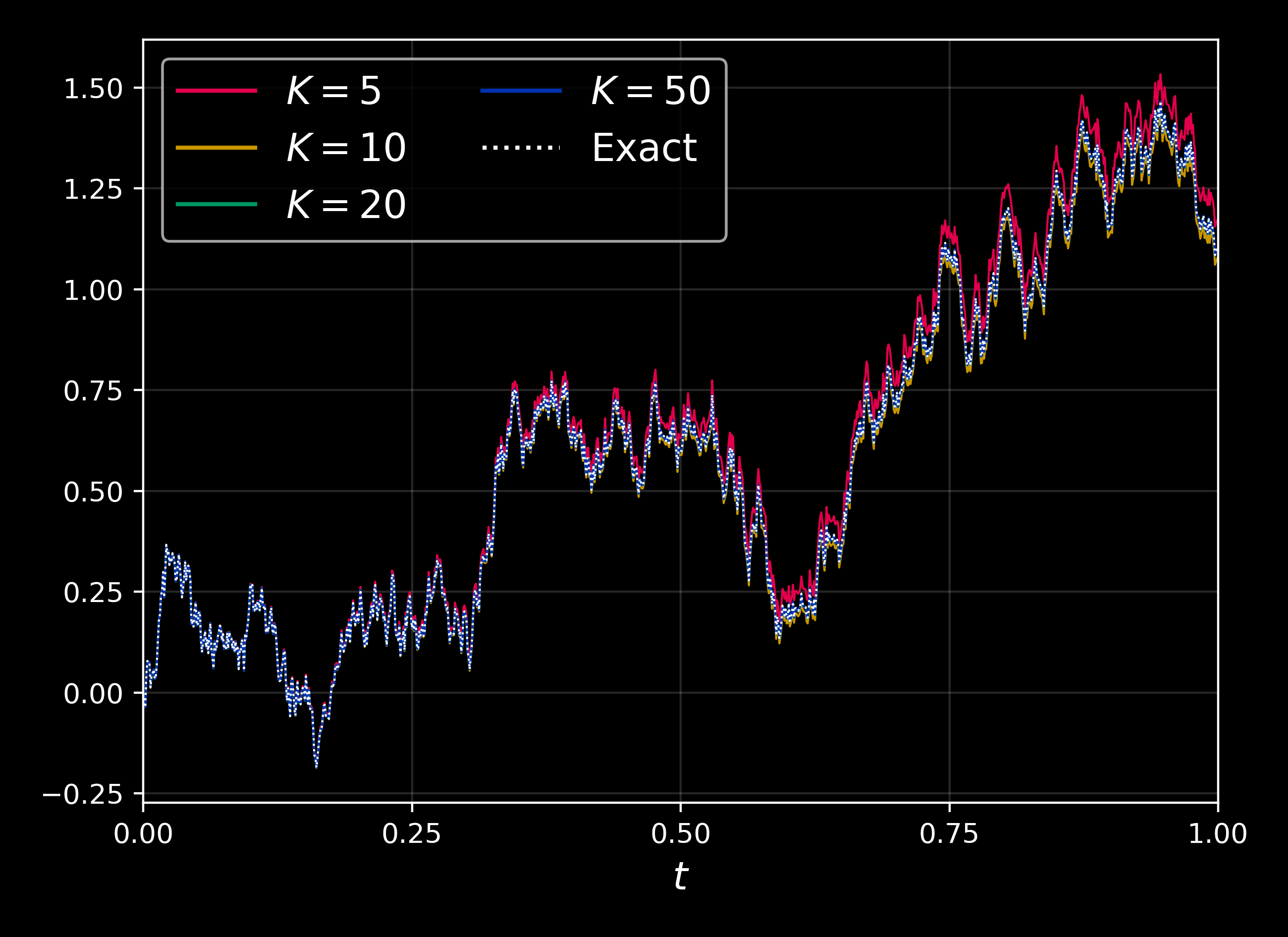}
\end{subfigure}
\begin{subfigure}[b]{0.495\textwidth}
 \centering
\caption{Zoom on $[0.75,1.]$.}
    \includegraphics[width=0.92\linewidth,height = 2.2in]{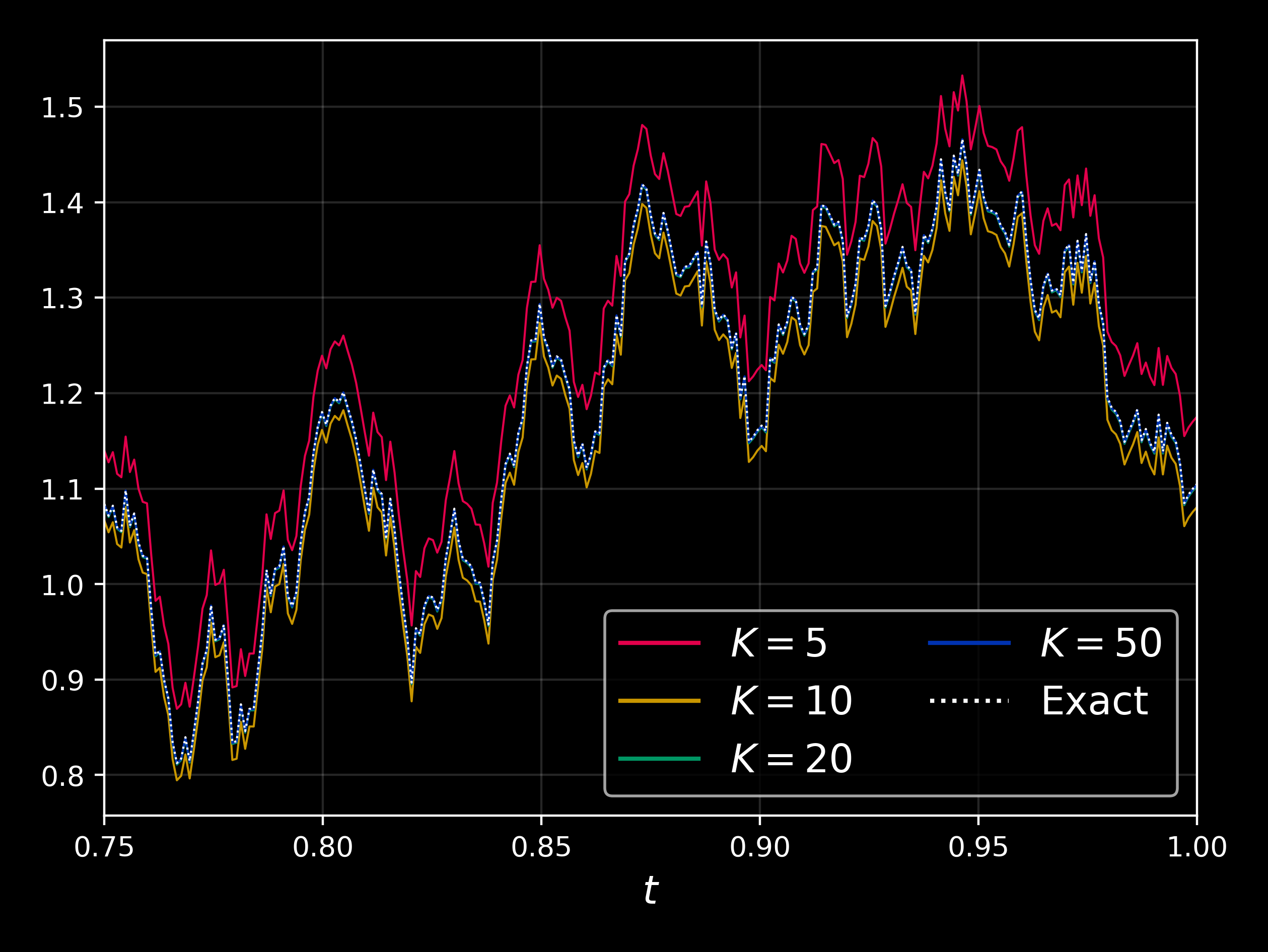}
\end{subfigure}
\end{figure}

\begin{figure}[H]
    \centering
    \caption{Pathwise errors $X-X^K$ ($\beta = 5$) for  increasing values of $K$. }
   
    \label{fig:pathwiseErrorCranstonLeJan}

    \includegraphics[width=0.5\linewidth,height = 2.1in]{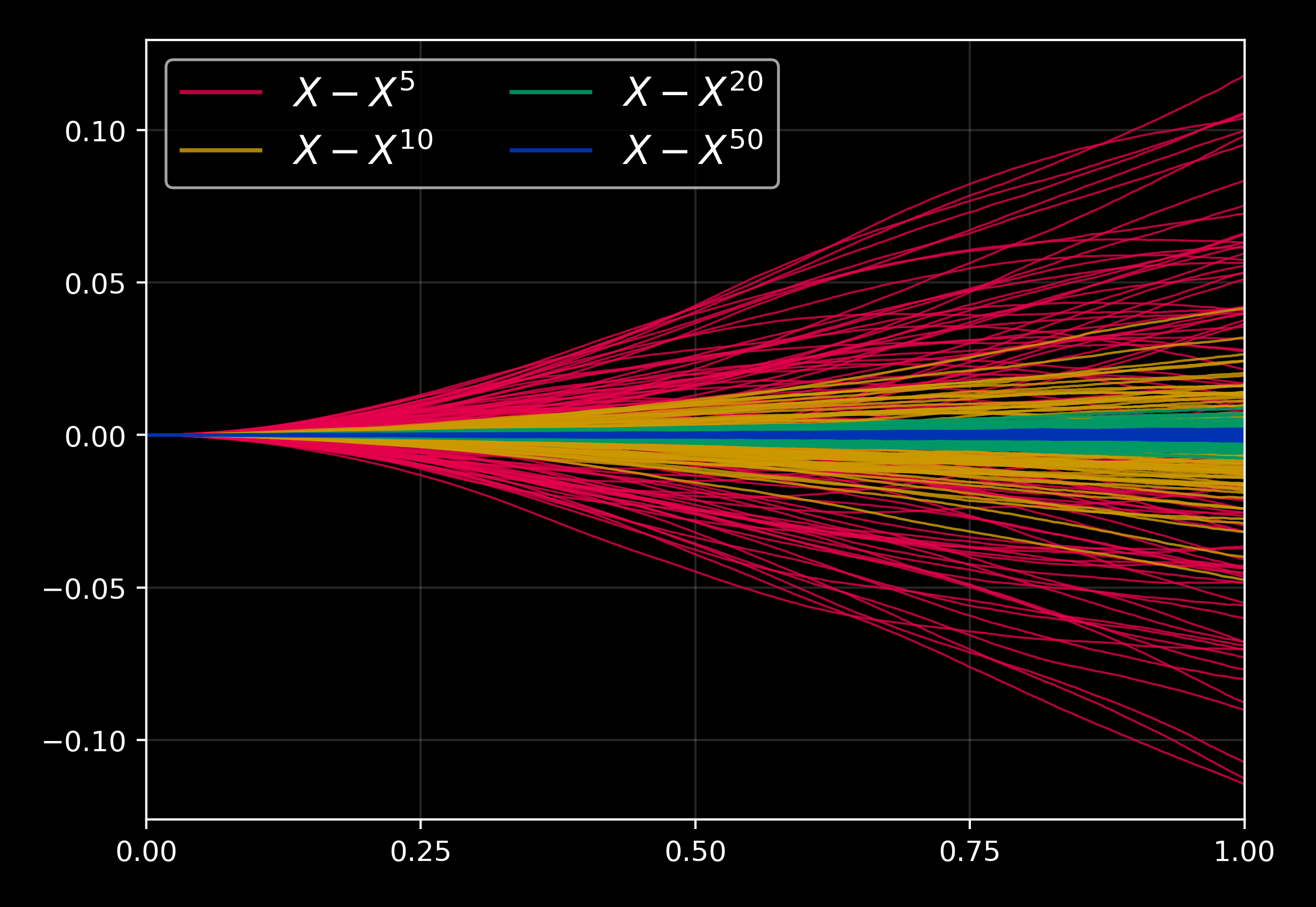}

\end{figure}

\begin{figure}[H]
    \centering
    \caption{Convergence rate of Cranston-Le Jan's diffusion \eqref{eq:SelfAttracting}, $\beta = 5$. }
   
    \label{fig:convergenceCranstonLeJan}

\begin{subfigure}[b]{0.49\textwidth}
        \centering
    \includegraphics[width=0.92\linewidth,height = 2.1in]{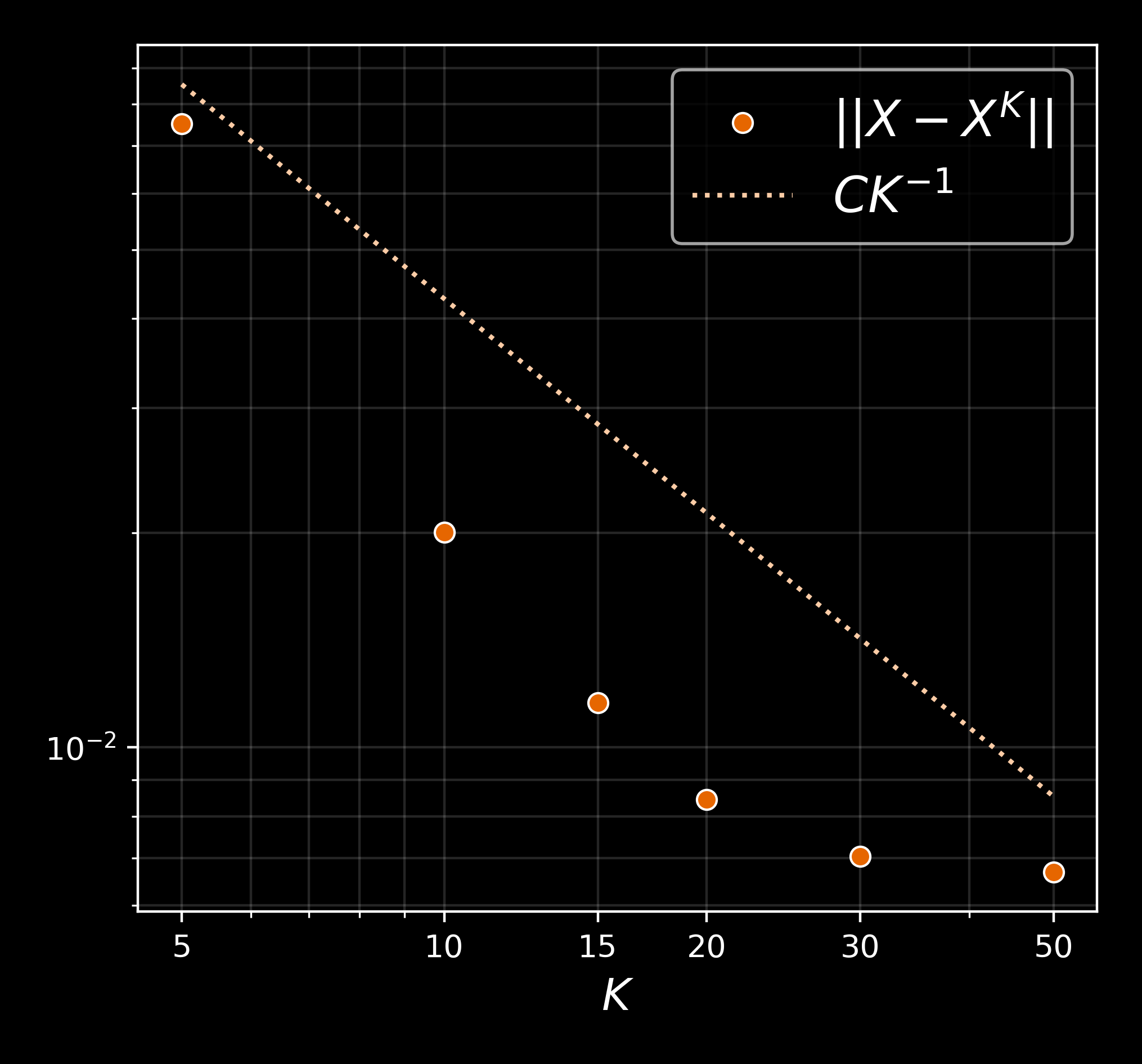}
\end{subfigure}
\begin{subfigure}[b]{0.49\textwidth}
 \centering
    \includegraphics[width=0.92\linewidth,height = 2.1in]{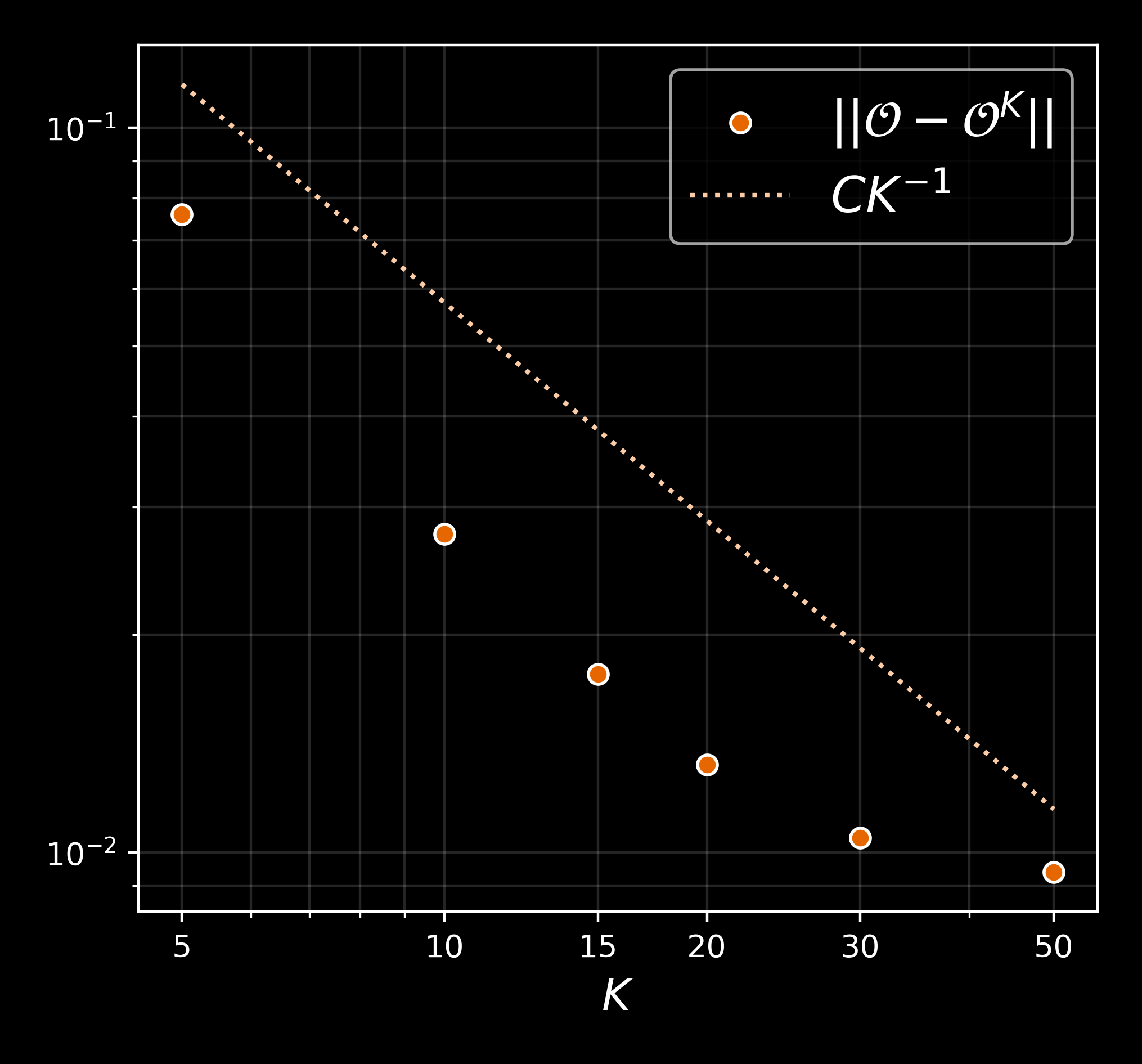}
\end{subfigure}
\end{figure}

\subsection{Raimond's Self-interacting Diffusion  }\label{sec:raimond}

Consider the following multi-dimensional 
diffusion introduced by \citet{Raimond}, 
\begin{equation}\label{eq:raimond}
    dX_t = \beta\bigg(\int_0^t \frac{X_s - X_t}{|X_s - X_t|}ds\bigg) dt + dW_t, \quad X_0 =x \in \R^d, 
\end{equation}
where $W$ is a Brownian motion in $\R^d$. Note that \eqref{eq:raimond} is self-attracting when $ \beta > 0$, and self-repelling otherwise. 
The associated  OSDE \eqref{eq:OSDE} is obtained by setting $\lambda \equiv 1$,  $\sigma \equiv I$, and 
\begin{equation}\label{eq:raimondDrift}
    b(\mo,x) = \beta \int_{\R^d} \varphi(y - x)\mo(dy),  \quad \varphi(x) = \frac{x}{|x|}.
\end{equation} 
Alternatively, note that 
$b(\mo,x) = - \nabla_x V(\mo,x)$
with the  potential $V(\mo,x) = \beta \int_{\R^d} |y-x| \mo(dy)$.  To remove  the singularity at $x = 0$ in the implementation, we replace the vector field $\varphi$  by $\varphi_{\varepsilon}(x) = \frac{x}{\sqrt{\varepsilon + |x|^2}}$ for small $\varepsilon>0$. It is then easily seen that 
the drift \eqref{eq:raimondDrift} satisfies  \cref{asm:growthLip}, with $\varphi_{\varepsilon}$ in lieu of $\varphi$. 

A sample trajectory of $X$ and its drift  are  depicted in \cref{fig:simulationsRaimond} for $d=2$ and  $\beta \in \{-1,5\}$. As expected, the drift increases in magnitude with $\beta$ (bottom chart), which intensifies the attraction of  the diffusion toward its own past trajectory. The cylindrical projection of $(\calO,X)$  is carried out using $f_k^K = \mathds{1}_{A_k^K}$, with boxes $A_k^K \subseteq \R^{2}$  forming a partition of $ [-R,R]^2$, $R = 2$ as shown in the top charts of \cref{fig:simulationsRaimond}. 

The convergence results are shown in \cref{fig:convergenceRaimond} with  $\beta = 5$, $T=1$, $2^9$ time steps, and $2^{13}$ sample paths.  Since no explicit solution of $(\calO,X)$ is known in this case, the cylindrical projections are compared to $ X^{\overline{K}}$ for large $\overline{K}$ (here $\overline{K} = 75$). Again, the truncation error decreases linearly, bounded only by sampling noise and discretization inaccuracies.



\begin{figure}[H]
    \centering
     \caption{Sample path (top) and drift  
     (bottom)  from Raimond's self-attracting diffusion \eqref{eq:raimond} for two values for $\beta$ (based on same Brownian  path). }
   
    \label{fig:simulationsRaimond}

\begin{subfigure}[b]{0.49\textwidth}
        \centering
       \caption{$\beta = -1$.}
    \includegraphics[width=0.96\linewidth,height = 2.1in]{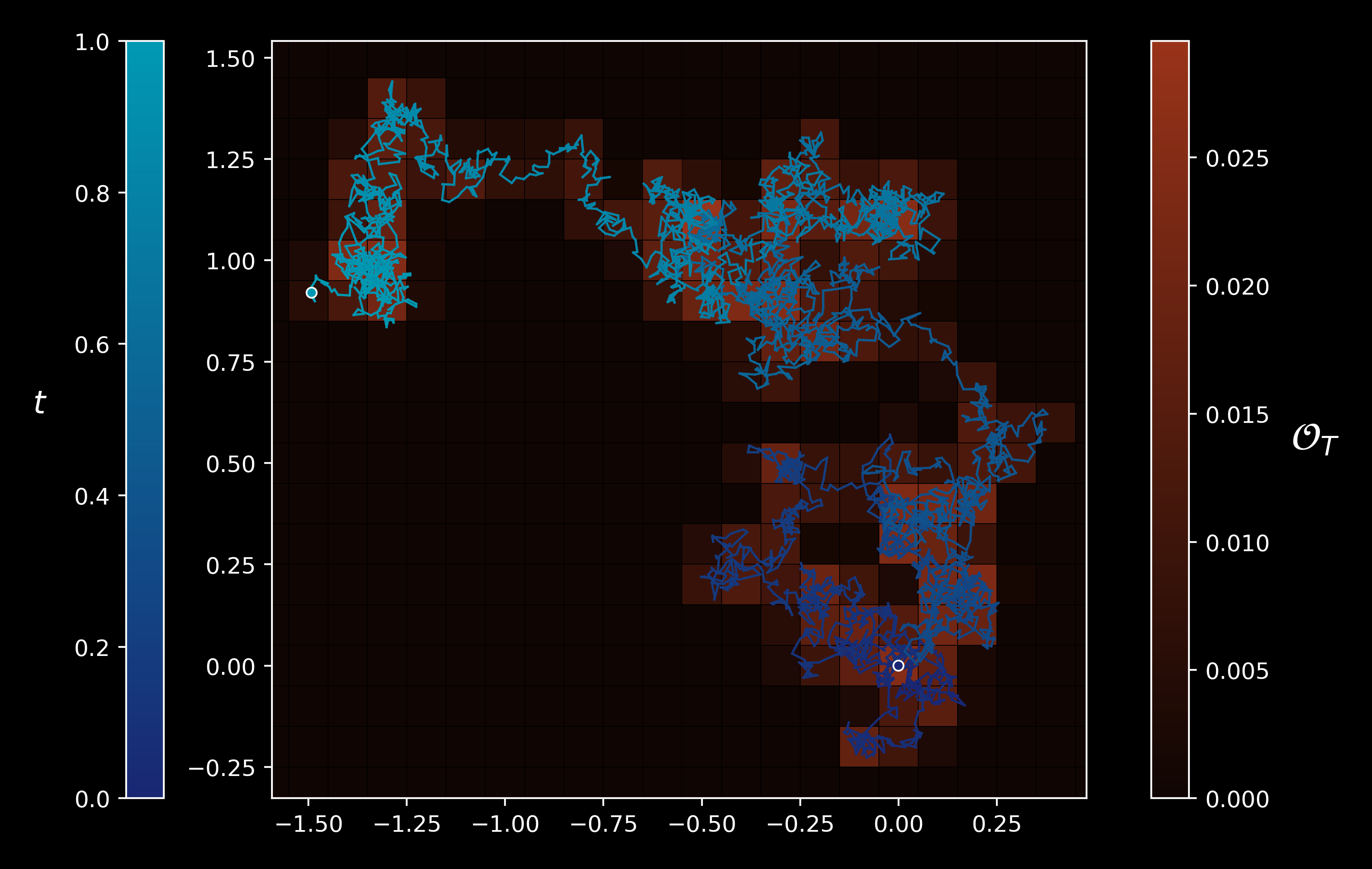}

     \includegraphics[width=0.96\linewidth]{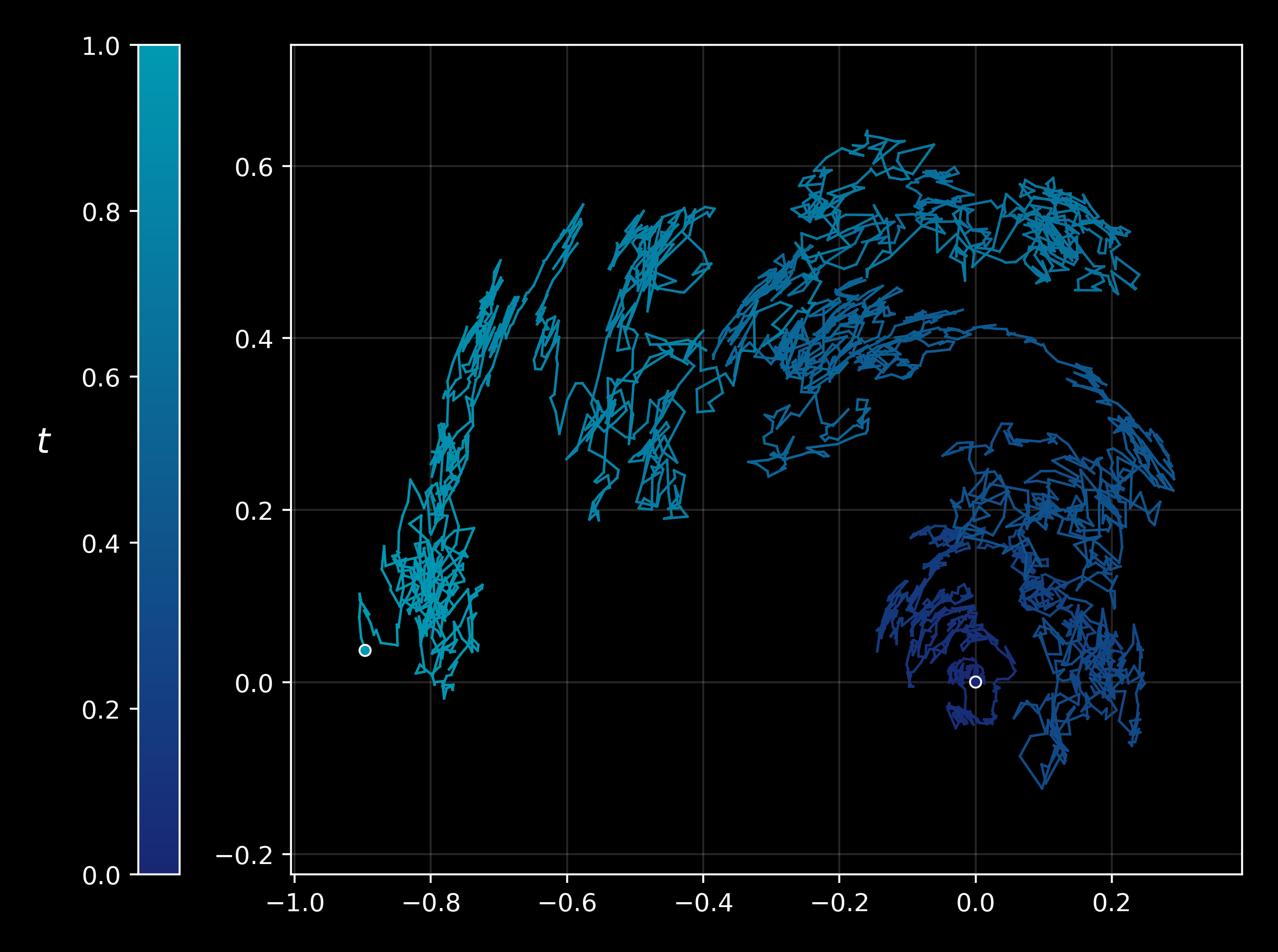}
\end{subfigure}
\begin{subfigure}[b]{0.49\textwidth}
 \centering
\caption{$\beta = 5$.}
    \includegraphics[width=0.96\linewidth,height = 2.1in]{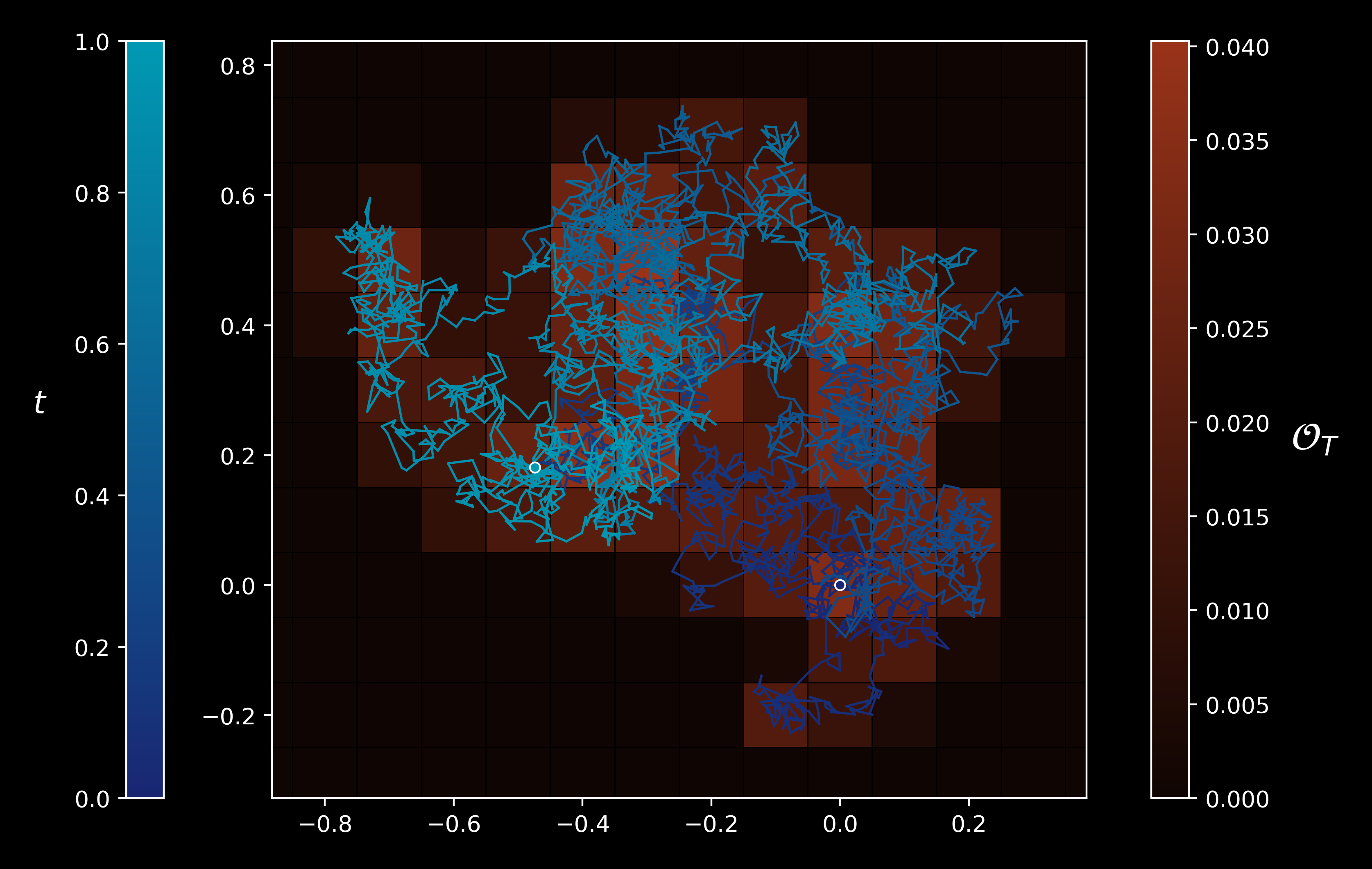}

     \includegraphics[width=0.96\linewidth]{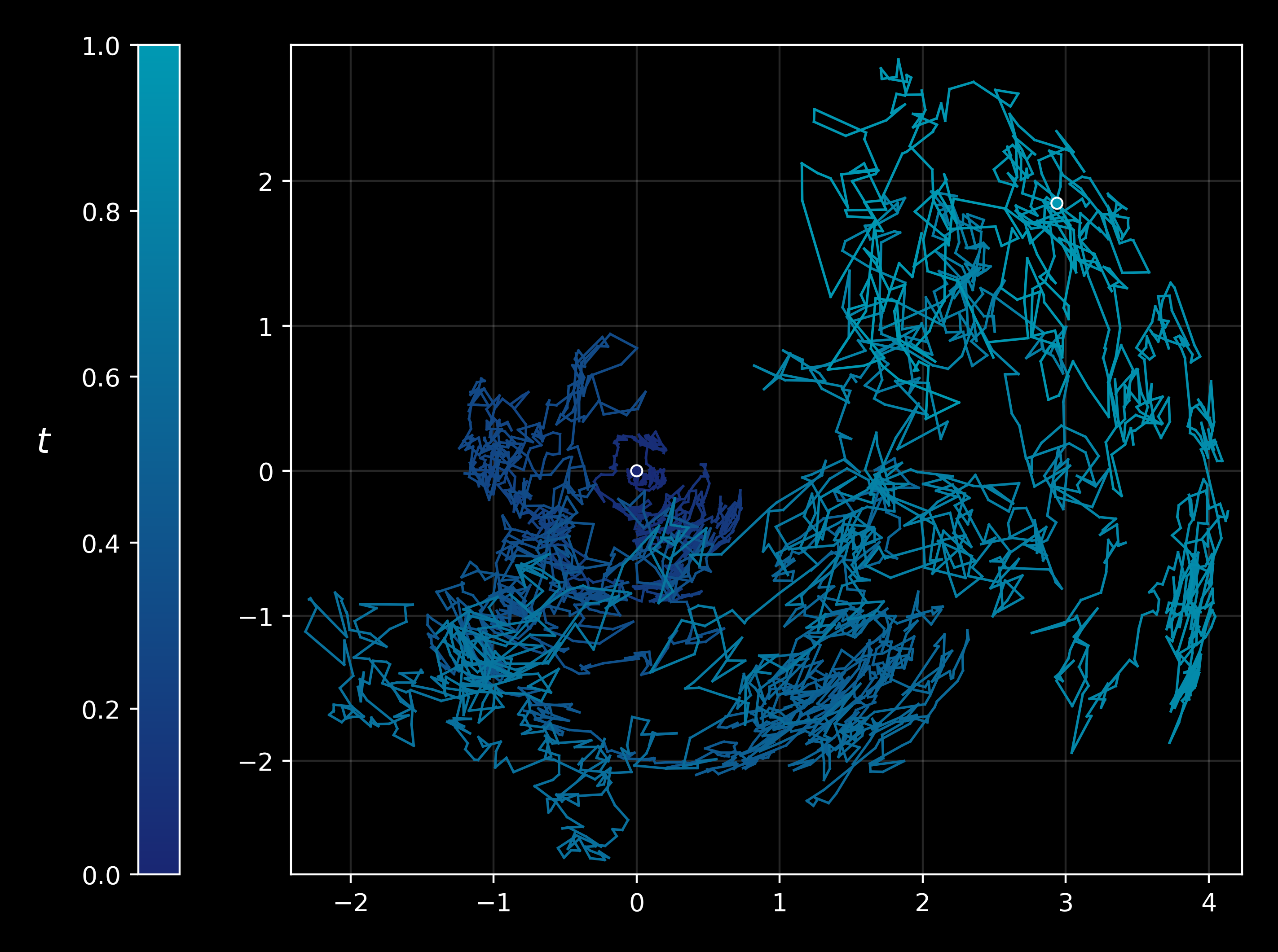}
\end{subfigure}
\end{figure}

\begin{figure}[H]
    \centering
    \caption{Convergence rate of Raimond's diffusion \eqref{eq:SelfAttracting}, $\beta = 5$. }
   
    \label{fig:convergenceRaimond}

\begin{subfigure}[b]{0.49\textwidth}
        \centering
    \includegraphics[width=0.92\linewidth,height = 2.1in]{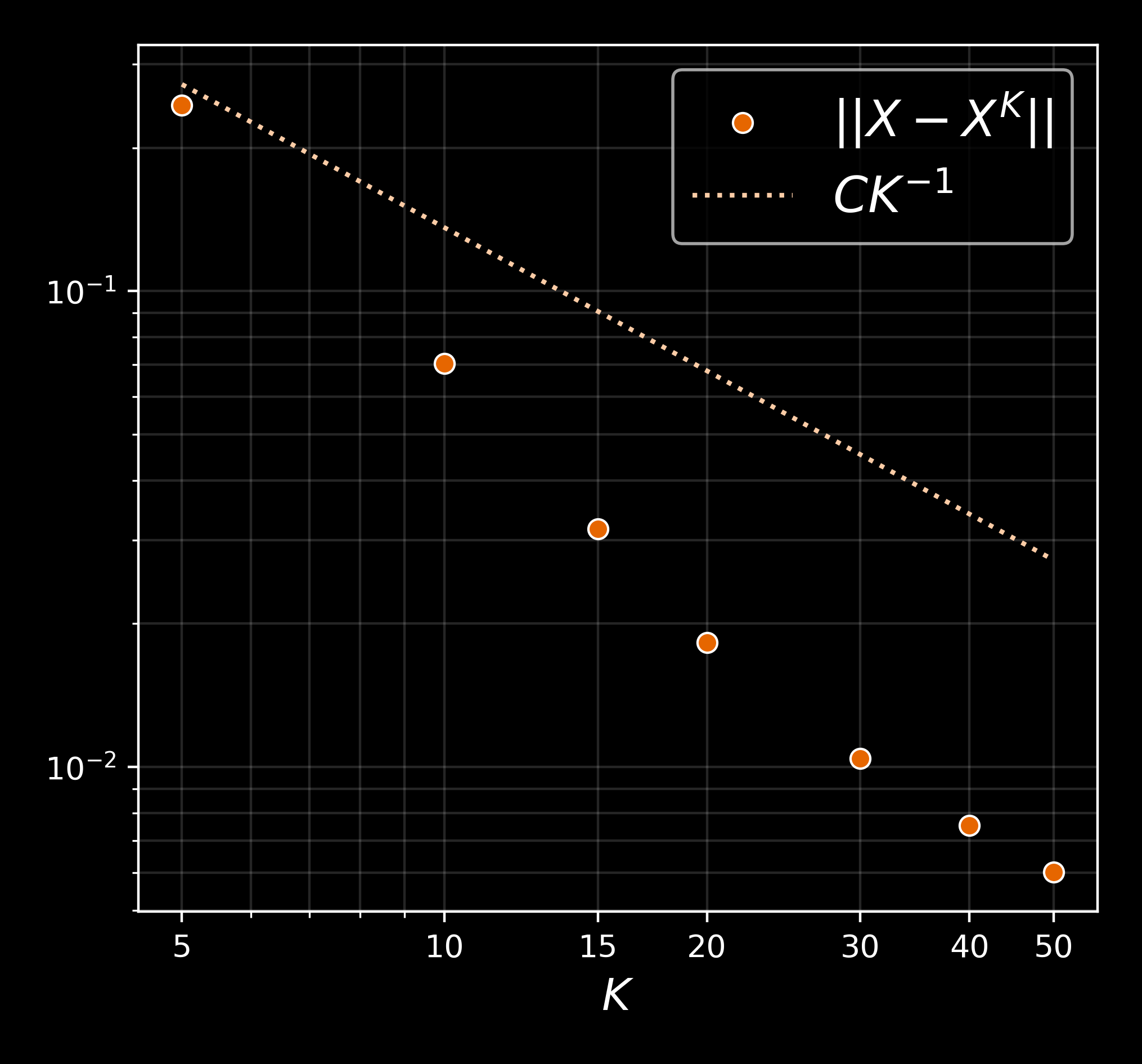}
\end{subfigure}
\begin{subfigure}[b]{0.49\textwidth}
 \centering
    \includegraphics[width=0.92\linewidth,height = 2.1in]{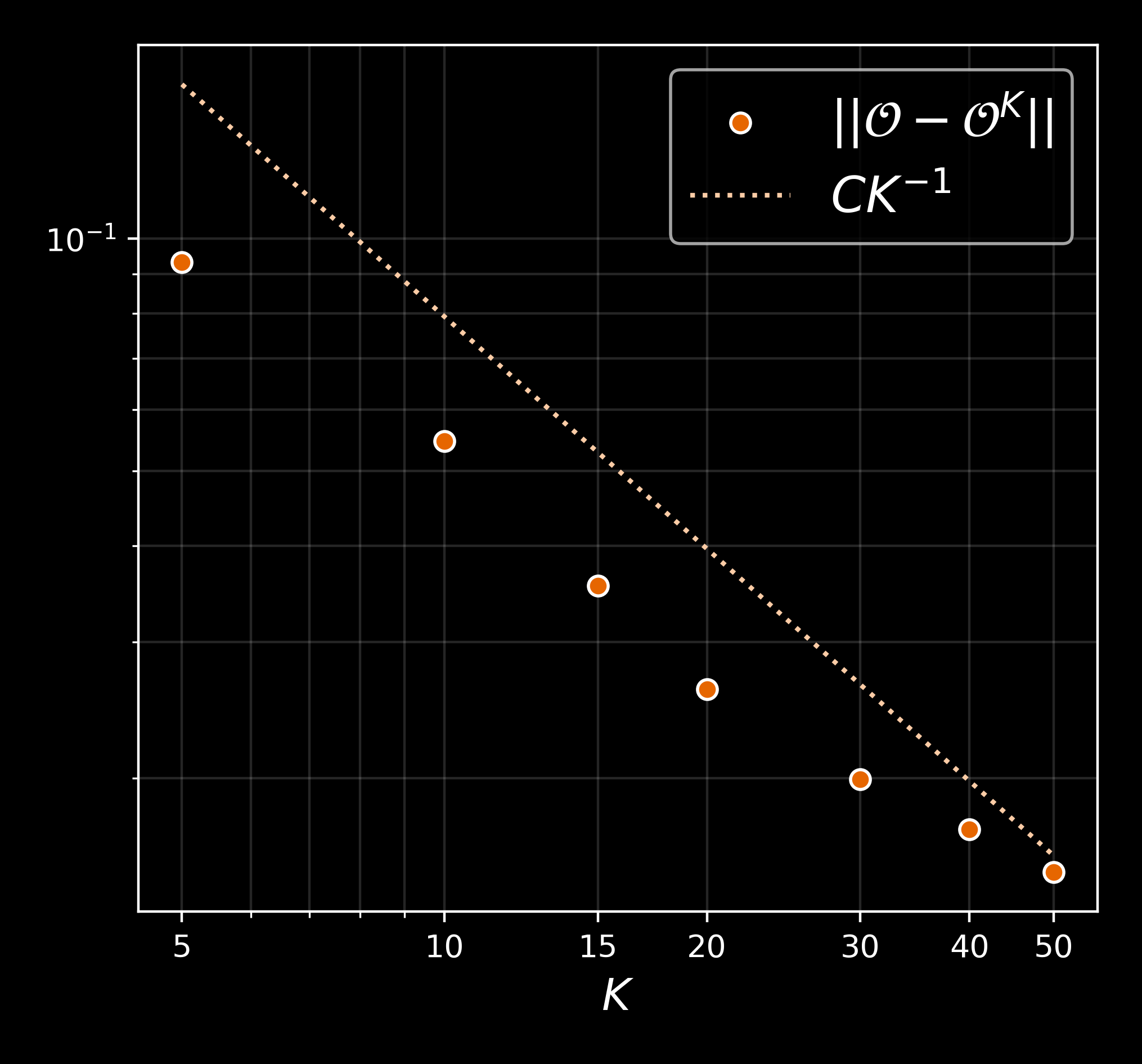}
\end{subfigure}
\end{figure}

\subsection{Local Occupied Volatility Model}\label{sec:LOV}
We conclude this section with a financial example where the diffusion coefficient, termed \emph{volatility}, depends on the exponential time occupation flow. Specifically, 
introduce the  simplified local occupied volatility (LOV) model \cite{TissotLOV}, 
    \begin{align}\label{eq:LOV}
    d\calO_t &= \delta_{X_t} e^{\kappa t}dt, \quad \quad  \calO_t(\R) = \frac{e^{\kappa t} - 1}{\kappa}=:\Lambda_t,\\[1em] 
        \frac{dX_t}{X_t} &= \sigma(\calO_t,X_t) \, dW_t, \\[1em]
      \sigma^2(\calO_t,X_t) &= v_{\text{loc}}(X_t) + \frac{1}{\Lambda_t}\int_{\R^+}\ell(x/X_t)\calO_t(dx), \label{eq:occVariance}
    \end{align}
    for some time-homogeneous \textit{local variance}  $v_{\text{loc}}:\R_+\to \R_+$. The function $\ell:\R_+\to \R_+$ describes the sensitivity of instantaneous variance with respect to path-dependent shocks, captured by the occupation measure $\calO_t$. In this example, we choose
\begin{equation}\label{eq:LV_Sensitivity}
        v_{\text{loc}}(x) = \alpha (x/x_0 - 1)^2 + \beta (x/x_0 - 1) +  \gamma, \quad \ell(y) = \delta\tanh((y-1)/\varepsilon).
    \end{equation}
That is, $v_{\text{loc}}(X_t)$ is a quadratic function of the simple return $X_t/x_0 - 1$ over the period $[0,t]$. The local variance achieves a minimum value of $v_{\text{min}} := \gamma  - \frac{\beta^2}{4\alpha}$. The local volatility $\sigma_{\text{loc}} := \sqrt{v_{\text{loc}}}$ and sensitivity function $\ell$ are displayed in \cref{fig:LV_and_sensitivity} for the parameters $(\alpha,\beta,\gamma,\delta,\varepsilon) = (1,-0.1,0.01, \frac{v_{\text{min}}}{2},0.1)$. Similar to \cite{TissotOP}, setting  $\delta < v_{\text{min}}$ ensures that the occupied variance \eqref{eq:occVariance} remains positive. 

The results are summarized in \cref{fig:simulationsLOV,fig:pathwiseErrorLOV,fig:convergenceLOV} with the aforementioned parameters. A reference solution is obtained by setting $X\approx X^{\overline{K}}$ with $\overline{K} = 100$. The noise observed in the pathwise errors (\cref{fig:pathwiseErrorLOV}) stems from fluctuations in the volatility coefficients across truncation levels. This is in contrast with the related  plot  for Cranston-Le Jan's diffusion (\cref{fig:pathwiseErrorCranstonLeJan}), where the solution only affects the finite variation part of the dynamics. 

\begin{figure}[H]
    \centering
      \caption{Local volatility $\sigma_{\text{loc}} = \sqrt{v_{\text{loc}}}$ (left) and local sensitivity function $\ell$ given in \eqref{eq:LV_Sensitivity} with $(\alpha,\beta,\gamma,\delta,\varepsilon) = (1,-0.1,0.01, \frac{v_{\text{min}}}{2},0.1)$.}
    \label{fig:LV_and_sensitivity}

\begin{subfigure}[b]{0.495\textwidth}
        \centering
    \includegraphics[width=0.92\linewidth,height = 2.1in]{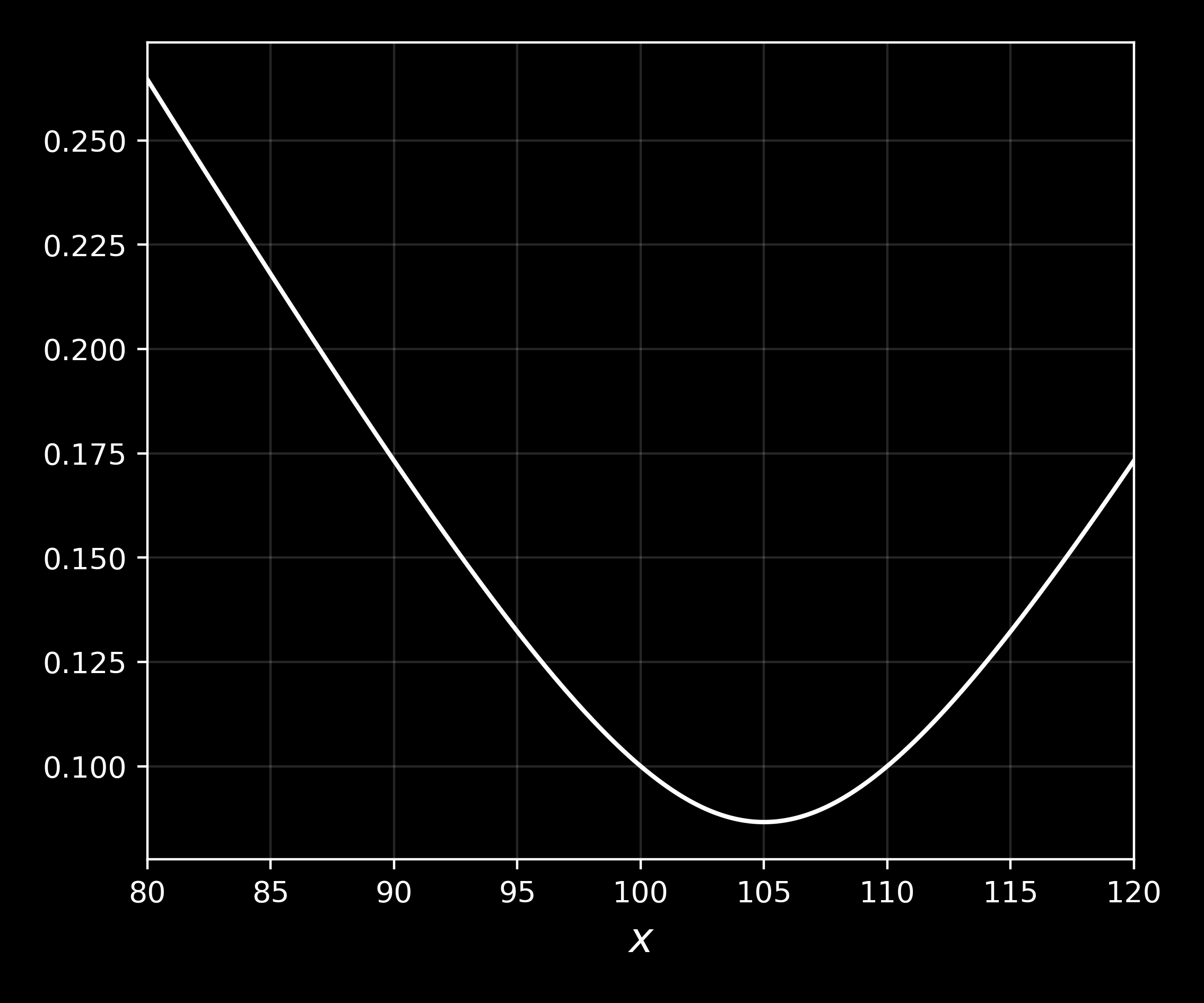}
\end{subfigure}
\begin{subfigure}[b]{0.495\textwidth}
 \centering
    \includegraphics[width=0.92\linewidth,height = 2.1in]{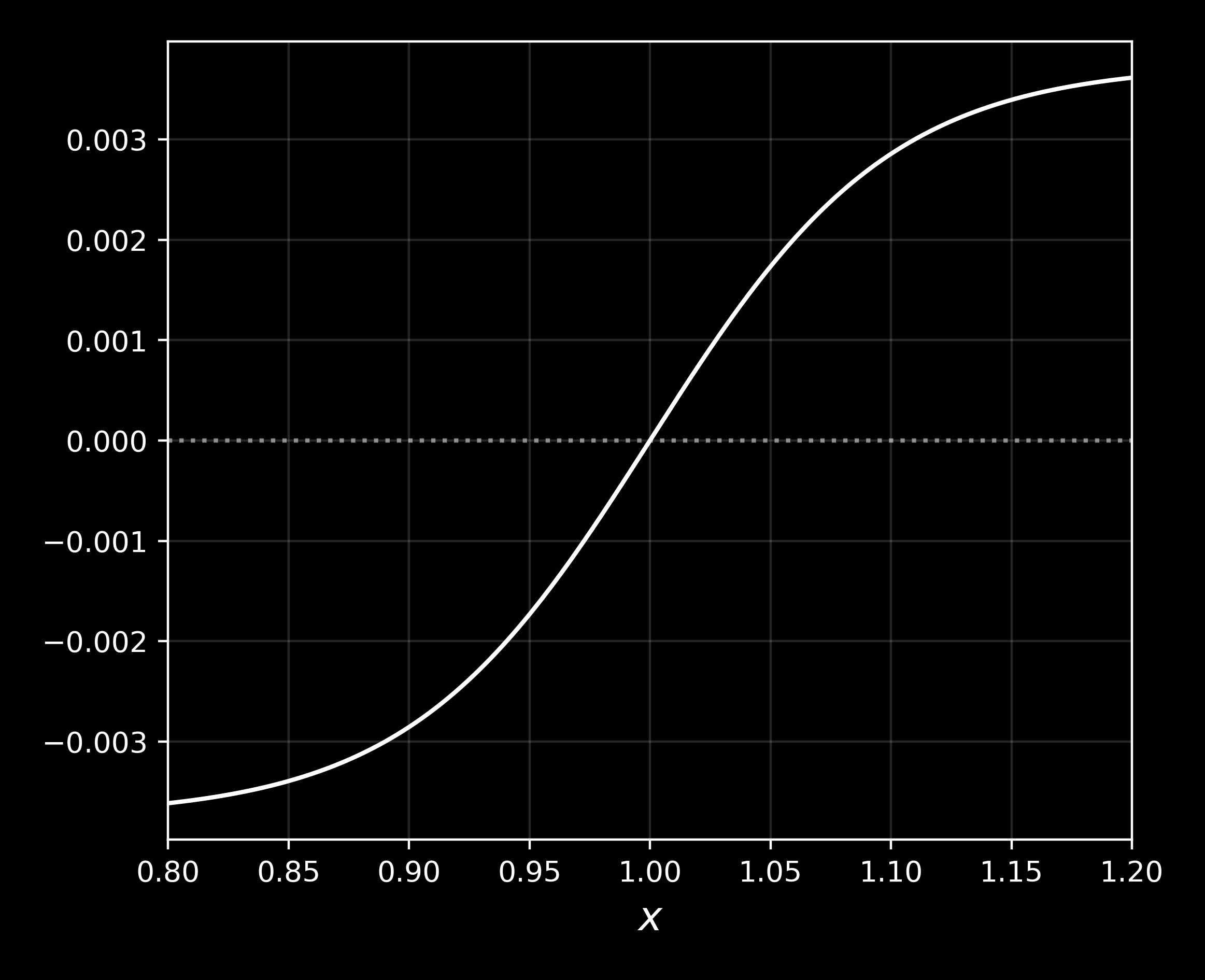}
\end{subfigure}
\end{figure}

\begin{figure}[H]
    \centering
    \caption{Sample path of the LOV model \eqref{eq:LOV} for varying truncation levels.}

    \label{fig:simulationsLOV}

\begin{subfigure}[b]{0.495\textwidth}
        \centering
       \caption{Full horizon $[0,1]$.}
    \includegraphics[width=0.92\linewidth]{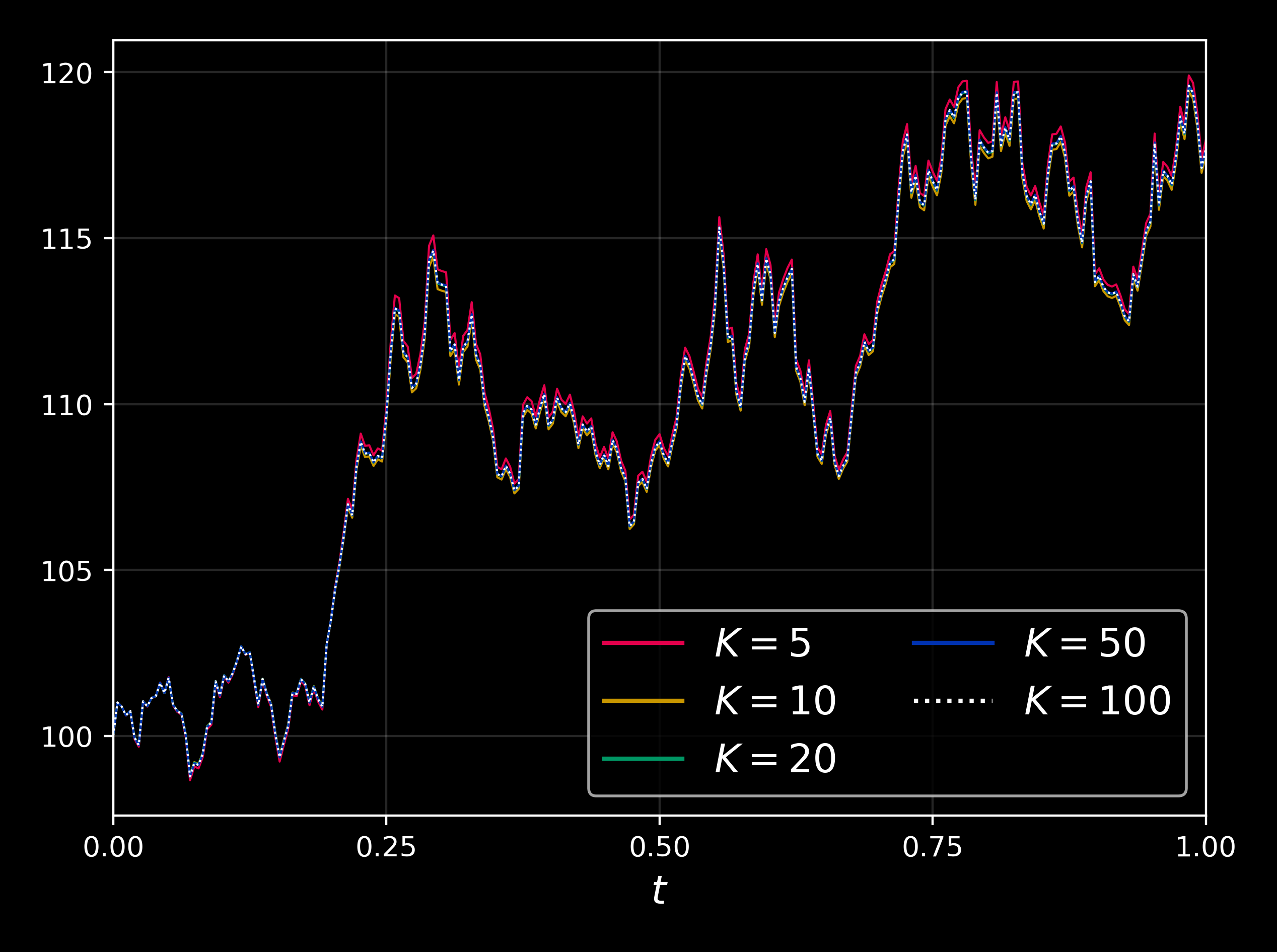}
\end{subfigure}
\begin{subfigure}[b]{0.495\textwidth}
 \centering
\caption{Zoom on $[0.75,1]$.}
    \includegraphics[width=0.9\linewidth]{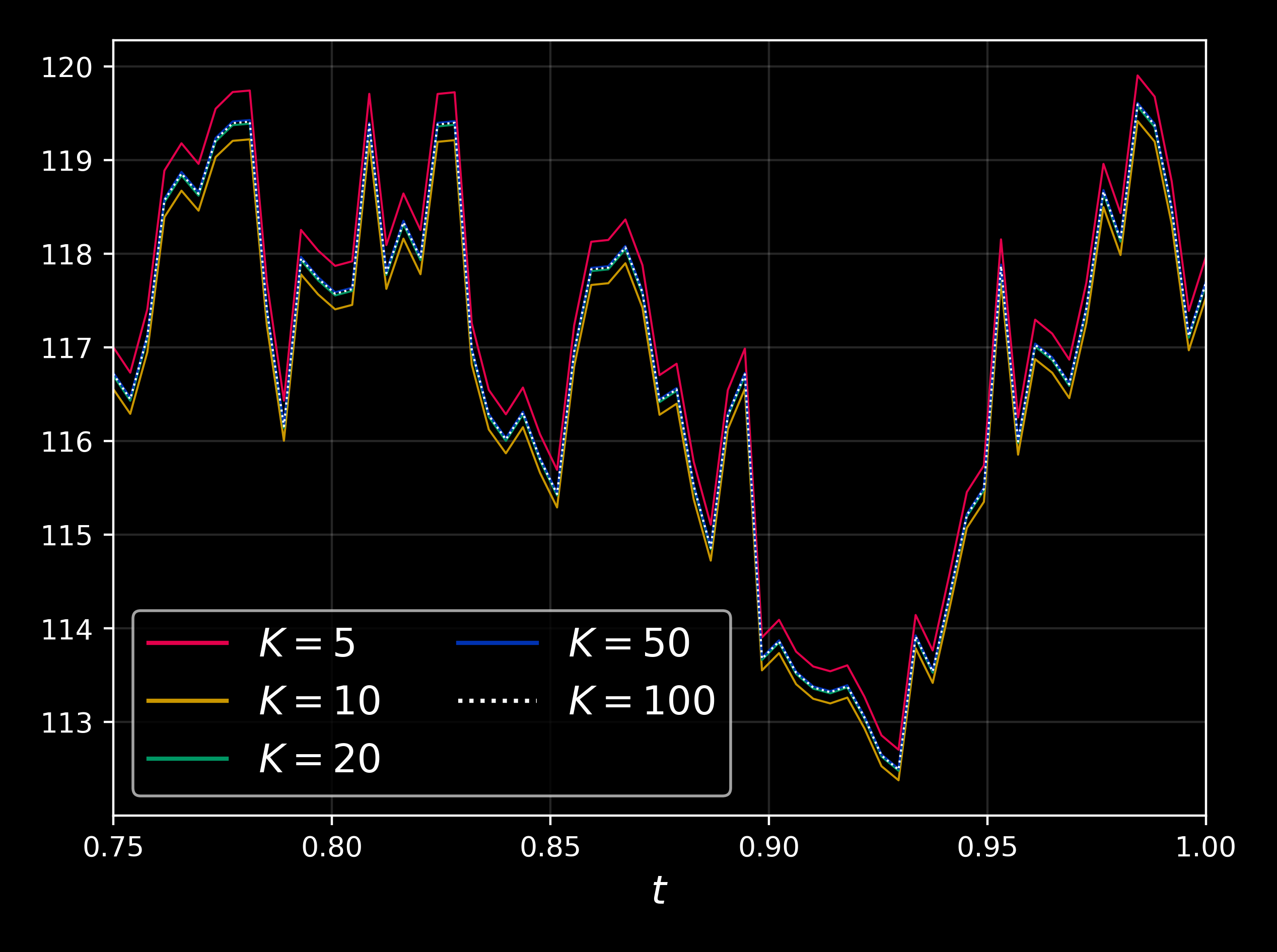}
\end{subfigure}
\end{figure}

\begin{figure}[H]
    \centering
    \caption{Pathwise errors $X^{100}-X^K$ in the LOV model \eqref{eq:LOV} for $J=1000$ simulations and increasing values of $K$.}
   
    \label{fig:pathwiseErrorLOV}
    \includegraphics[width=0.55\linewidth]{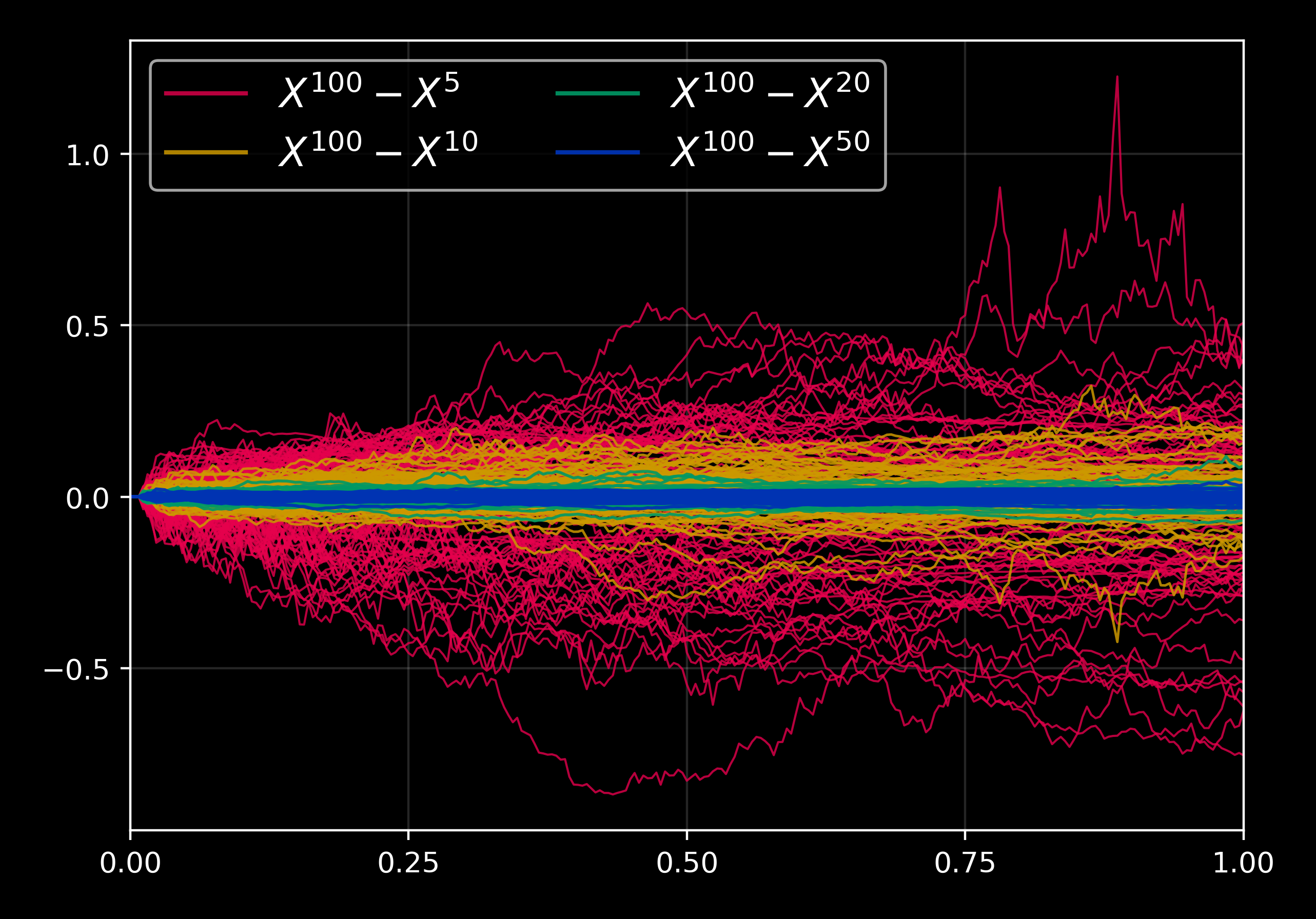}
    \label{fig:sim6}

\end{figure}

\begin{figure}[H]
    \centering
    \caption{Convergence rate for the LOV model \eqref{eq:LOV}-\eqref{eq:occVariance}.}
   
    \label{fig:convergenceLOV}

\begin{subfigure}[b]{0.49\textwidth}
        \centering
    \includegraphics[width=0.82\linewidth]{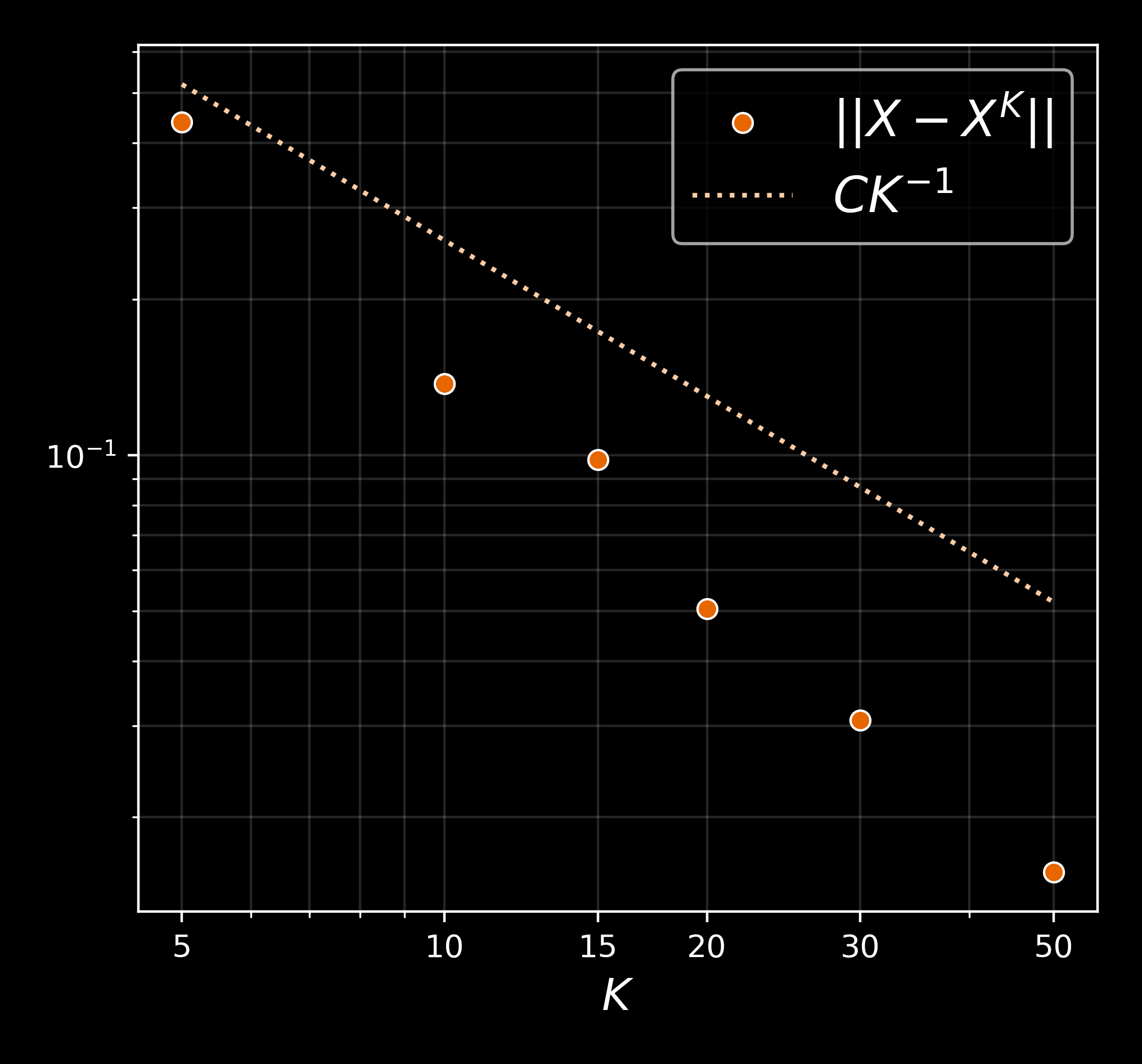}
    \label{fig:sim3}
\end{subfigure}
\begin{subfigure}[b]{0.49\textwidth}
 \centering
    \includegraphics[width=0.8\linewidth]{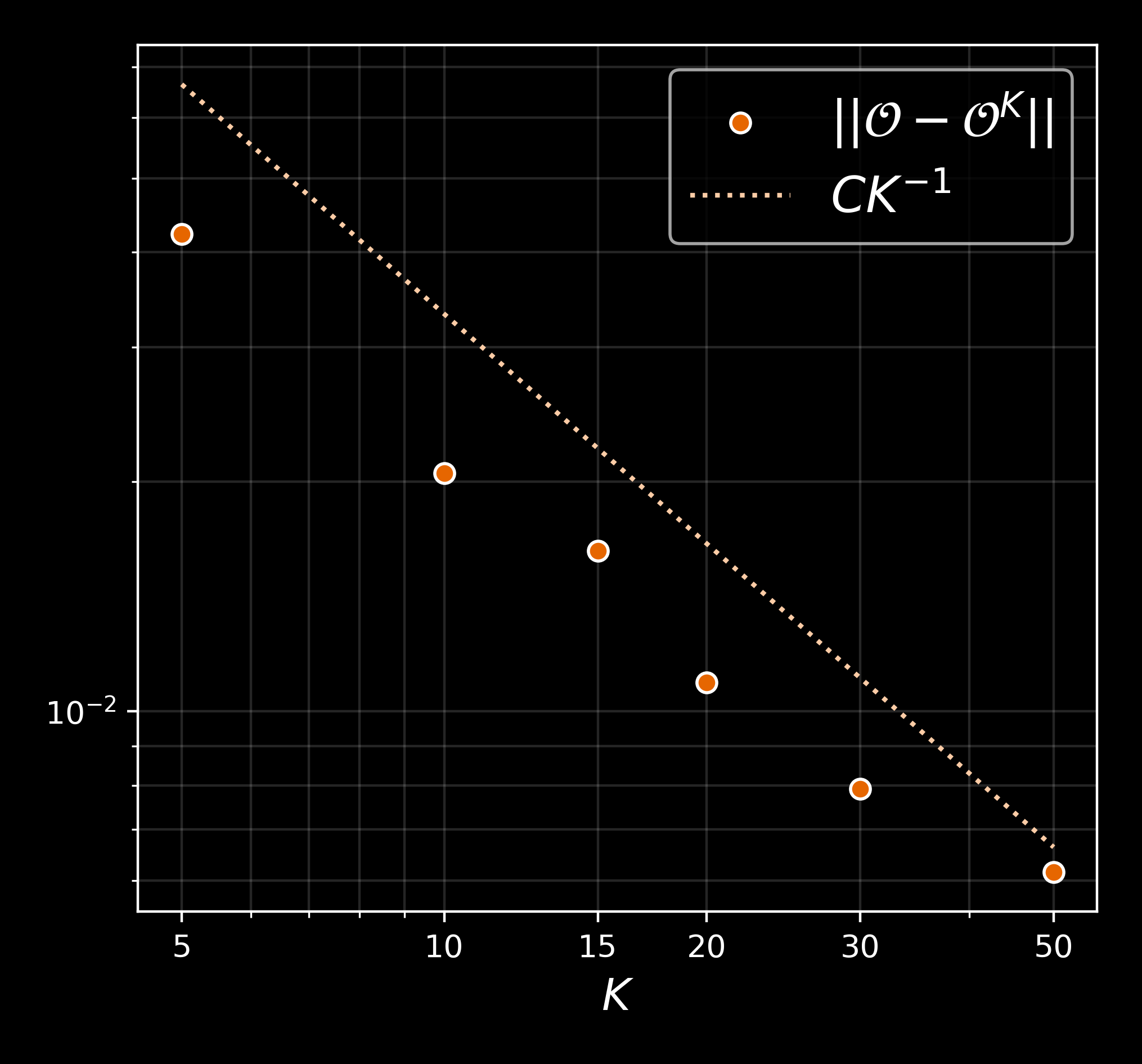}
    \label{fig:sim6}
\end{subfigure}
\end{figure}

\subsection{Alternative Schemes and Comparison} \label{sec:comparison}

We compare the proposed cylindrical projection
with two natural alternatives: the direct-history Euler scheme and a Fourier-moment
scheme. Let \(t_n=n\delta t\), \(n=0,\ldots,N\), and let \(M_K:=N_K+1\) denote the number of
coordinates in the cylindrical state \(Z^K\). For a tensor-product grid with \(K\) cells
per coordinate, \(M_K\asymp K^d\).

\paragraph{Direct-history scheme.} This approach consists of discretizing the occupation flow according to 
\[
O^N_{t_n}=\sum_{j=0}^{n-1}\delta_{X_{t_j}} \delta t,
\]
and evaluates the coefficients using the full past trajectory. For the
regularized Raimond interaction (Section~\ref{sec:raimond}), 
\begin{equation}\label{eq:raimondDrift}
    b(\mo,x)=\beta\int_{\mathbb R^d}\phi_\varepsilon(y-x)\mo(dy),
\qquad
\phi_\varepsilon(u)=\frac{u}{\sqrt{\varepsilon+|u|^2}},
\end{equation}
the direct Euler drift is
\[
b_n^{\rm hist}
=
\beta \sum_{j=0}^{n-1}\phi_\varepsilon(X_{t_j}-X_{t_n})\delta t.
\]
Thus, step \(n\) requires \(O(nd)\) kernel operations and one path requires
$ \sum_{n=1}^{N} n = O(N^2) $
kernel evaluations, or \(O(N^2d)\) arithmetic operations up to constants. The memory
requirement is \(O(Nd)\) if the whole trajectory must be retained. This direct scheme is
a useful benchmark, since it introduces no spatial projection error beyond the time
discretization error of Euler--Maruyama \cite{KloedenPlaten}. However, its quadratic
cost becomes prohibitive for fine time grids or long horizons. In some
special models, such as the Cranston--Le Jan example, algebraic simplifications reduce
the cost through a running barycenter, but such reductions are model-specific and are
not available for a general occupied diffusion. 

The cylindrical projection replaces \(O_t\) by the finite-dimensional vector
\[
Z^K_t=(\calO_t(f^K_0),\ldots,\calO_t(f^K_{N_K})).
\]
On the time grid, Algorithm~\ref{alg:simOSDE} updates $Z^K$ according to 
\[
Z^{K,n+1}
=
Z^{K,n}
+
\lambda^K(Z^{K,n},X^K_{t_n})f^K(X^K_{t_n}) \delta t,
\]
and then updates \(X^K\) using \(b^K\) and \(\sigma^K\). For indicator boxes, the
occupation update is local, typically \(O(1)\) per step; for a smooth partition of unity,
the update cost is proportional to the number of nonzero basis functions at the current
point. In the self-interacting kernel examples, the dense drift evaluation  has the form
\[
b_n^K
=
\beta\sum_{i=0}^{N_K} Z_i^{K,n}\,
\phi_\varepsilon(x_i^K-X^K_{t_n}),
\]
and costs \(O(M_Kd)\) per step, hence \(O(NM_Kd)\) per path. The online memory is
\(O(M_K)\), apart from the optional storage of the output trajectory. Therefore, when
\(M_K\ll N\), the cylindrical scheme offers a substantial reduction in computational cost compared to the direct-history approach. Even when \(M_K\) is comparable to or larger than \(N\), the method still has two
important advantages: the simulated process remains finite-dimensional and Markovian,
and the projection error is deterministic and controlled by Theorem~\ref{thm:convergence}.

\paragraph{Fourier scheme.} An alternative approximation leverages Fourier moments of the occupation flow. 
Specifically, suppose that the
occupation dependence is linear and translation-invariant as in \eqref{eq:raimondDrift}, that is, 
$
b(\mo,x)=\int_{\mathbb R^d}\phi(y-x)\mo(dy).$  
After localization or periodization on a compact domain, the function $\phi$ admits the truncated Fourier representation
\begin{equation}\label{eq:Fourier}
    \phi(u) \approx \phi_L(u) := \sum_{z_\ell \in \mathcal{I}_L}
a_\ell e^{i z_\ell \cdot u},
\end{equation}
with frequencies $z_\ell$ on the truncated lattice $\mathcal{I}_L = \{-L, \dots, L\}^d \setminus \{0\}$. The past trajectory is then captured by the Fourier moments
\[
Z_\ell^n
=
\sum_{j=0}^{n-1}e^{i z_\ell \cdot X_{t_j}} \delta t,
\qquad
Z_\ell^{n+1}
=
Z_\ell^n+ e^{i z_\ell \cdot X_{t_n}}  \delta t.
\]
From the occupation time formula, the drift is estimated  as
$$b(\calO_{t_n},X_{t_n}) \approx 
\sum_{z_\ell \in \mathcal{I}_L}
a_\ell e^{-i z_\ell \cdot X_{t_n}}Z_\ell^n .$$
The resulting cost is $O(N M_L d)$ per path, where $M_L := |\calI_L|= (2L+1)^d - 1,$ and the active memory is $O(M_L)$. For higher dimensions ($d \ge 4$), the deterministic lattice can be replaced by random Fourier Features 
to break the curse of dimensionality. Similar randomization techniques can be used for the cylindrical projections by optimally selecting the centers of the partition of unity, e.g. according to a low-discrepancy sequence. 

The Fourier
coefficients \(a_\ell\) in \eqref{eq:Fourier} may be computed offline by spectral methods or FFT-based
procedures; see \cite{Trefethen2000,Boyd2001,CooleyTukey1965}. This approach can be
very efficient when \(\phi\) is smooth, periodic, and of convolution type. However, the scheme is less effective for singular or sharply localized kernels, which require numerous high-frequency modes to resolve. 
It also relies on global oscillatory basis functions, introduces truncation
or periodic-boundary effects, and is not directly applicable to general nonlinear
dependencies of \((\lambda,b,\sigma)\) on the occupation measure. For instance, the LOV
model in Section 4.3 depends on the occupation measure through a state-dependent
volatility functional, which is naturally handled by cylindrical projection but is not a
simple translation-invariant Fourier convolution.

Fourier moments
may be faster than cylindrical projections when \(M_L\ll M_K\) and the kernel is smooth and
translation-invariant. The advantages of the cylindrical projection are of a different nature: it
provides a model-independent finite-dimensional Markov approximation of the occupation
flow, preserves the measure-based interpretation of  path dependence, applies to
nonlinear and non-convolutional coefficients. Furthermore, it  comes with the strong
projection error estimates established in Sections~\ref{sec:convergence}.

\begin{table}[htbp]
\centering
\caption{Computational complexity and limitations of discretization schemes.}
\label{tab:comparison}
\begin{tabular}{c|c|c}
\hline
\textbf{Scheme} & \textbf{Cost per path} & \textbf{Limitations} \\
\hline
Direct history         & $O(N^2d)$   & quadratic cost and full path in RAM \\
Fourier moments        & $O(NM_Ld)$    & convolution structure only and $M_L \asymp L^d$\\ 
Cylindrical projection & $O(NM_Kd)$  & projection dimension $M_K \asymp K^d$ \\
\hline
\end{tabular}
\end{table} 

Table~\ref{tab:comparison} provides a comparative summary of the discretization schemes, while Table~\ref{tab:runtimeCompError} details their empirical performance for Raimond's self-interacting diffusion with $\beta = 5$ and $d\in \{2,3\}$. Strong errors are evaluated against the direct-history scheme using the norm $\|\cdot\|_{\Q}$ from  \eqref{eq:normX}.  For the Fourier scheme with $d= 2$, we use a spectral frequency limit of  $L = 5$, leading to $M_L = (2L + 1)^2 - 1 =  120$ modes, hence $2M_L = 240$  scalar values. 
We align the cylindrical scheme's memory usage by setting $K = 15$ intervals per dimension for the indicator functions $f_k^K = \mathds{1}_{A_k^K}$, giving $M_K = K^2 + 1 = 226$ scalar values. When $d = 3$, we choose $L = 3$ and $K = 8$, leading to $684$ and $513$ scalar values in active memory, respectively.\footnote{Alternative partitions of unity, such as linear and cubic splines, were also tested for the cylindrical scheme; these provided similar accuracy to indicator functions but resulted in higher runtimes.} 
 Table~\ref{tab:runtimeCompError} shows that the cylindrical projection scheme outperforms the Fourier method in both accuracy and execution speed. 

Runtimes are plotted in the left panel of Figure~\ref{fig:performance_metrics}, highlighting the quadratic growth of the direct-history scheme.  Interestingly, the Fourier scheme is more than twice as slow as the cylindrical projection despite their aligned memory constraints. This behavior is driven by the repeated evaluation of complex exponentials, which require multiple CPU clock cycles compared to the basic floating-point operations utilized in the cylindrical projection. 

\begin{table}[htbp]
\centering
\caption{Runtime per path (in milliseconds) and strong errors against the direct-history scheme for Raimond's self-interacting diffusion ($\beta = 5$) in dimensions $d \in \{2, 3\}$.}
\label{tab:runtimeCompError}
\begin{tabular}{c|c|c|c|c|c}
\hline
\multicolumn{6}{c}{\textbf{d = 2}} \\
\hline
 & \multicolumn{3}{c|}{\textbf{Runtime} (milliseconds)} & \multicolumn{2}{c}{\textbf{Strong errors}} \\
\hline
\textbf{$N$} & {Direct-history} & {Fourier} & {Cylindrical} & {Fourier} & {Cylindrical} \\
\hline
128  & 0.06 & 0.56  & 0.26  & 0.30 & 0.09  \\     
256  & 0.23  & 1.20 & 0.51   & 0.30  & 0.10 \\      
512  & 1.05  & 2.38  & 1.07  & 0.31 & 0.10  \\     
1024 & 5.02  & 4.87 & 2.12   & 0.31 & 0.09  \\     
2048 & 19.94 & 9.80 & 4.56   & 0.29 & 0.10  \\    
4096 & 82.50 & 19.54 & 9.08  & 0.31 & 0.10  \\ 
\hline
\multicolumn{6}{c}{\textbf{d = 3}} \\
\hline
 & \multicolumn{3}{c|}{\textbf{Runtime} (milliseconds)} & \multicolumn{2}{c}{\textbf{Strong errors}} \\
\hline
\textbf{$N$} & {Direct-history} & {Fourier} & {Cylindrical} & {Fourier} & {Cylindrical} \\
\hline
128    & 0.07         & 1.94     & 0.88              & 0.44       & 0.14         \\     
256    & 0.36            & 3.88    & 1.75             & 0.44     & 0.14         \\      
512    & 1.47              & 7.68  & 3.84               & 0.44   & 0.13         \\      
1024   & 6.17           & 16.29  & 6.99                  & 0.43  & 0.14          \\
2048   & 27.44           & 31.31  & 14.93                 & 0.43  & 0.15          \\   
4096   & 118.32        & 66.42    & 29.48                 & 0.42   & 0.14          \\
\hline
\end{tabular}
\end{table}

\begin{remark}
    More generally, the structural mechanism behind the Fourier scheme can be extended to general kernels $\phi = \phi(y,x)$  by leveraging approximations via dense families of separable functions. If the bivariate kernel admits a truncated expansion in terms of products or sums of univariate functions---such as those arising from low-rank tensor-train decompositions or  polynomial expansions techniques---it can be written as 
    $$\phi(y,x) \approx \sum_{\ell=1}^L \phi_{\ell,1}(y) \phi_{\ell,2}(x),$$ 
    for some truncation level $L\in \N$. Under this separation of variables, the past trajectory can again be compressed online into running moments $Z_\ell^n = \sum_{j=0}^{n-1} \phi_{\ell,1}(X_{t_j})\delta t$, which satisfy the  $O(1)$  update $Z_\ell^{n+1} = Z_\ell^n + \phi_{\ell,1}(X_{t_n})\delta t$. The resulting drift $
b_n^L =  \sum_{\ell=1}^L Z_\ell^n \phi_{\ell,2}(X_{t_n})$ 
circumvents the $O(N^2)$ direct-history bottleneck to achieve an overall  complexity of $O(N L d)$ per path, with an active memory  of $O(L)$. While highly efficient for smooth kernels where a low-rank structure can be computed offline, this paradigm is  restricted to linear occupation-dependencies and scales poorly if the non-convolutional kernel requires a prohibitive  number of mixing modes to  uniformly control the approximation error. 
\end{remark}

\begin{figure}[htbp]
    \centering

    \caption{Computational metrics for Raimond's self-interacting diffusion ($\beta = 5$) across dimensions $d = 2$ (top row) and $d = 3$ (bottom row).}
    \begin{subfigure}[b]{0.49\linewidth}
        \centering
            \caption{Runtime per path (ms).}
        \includegraphics[width=0.8\linewidth]{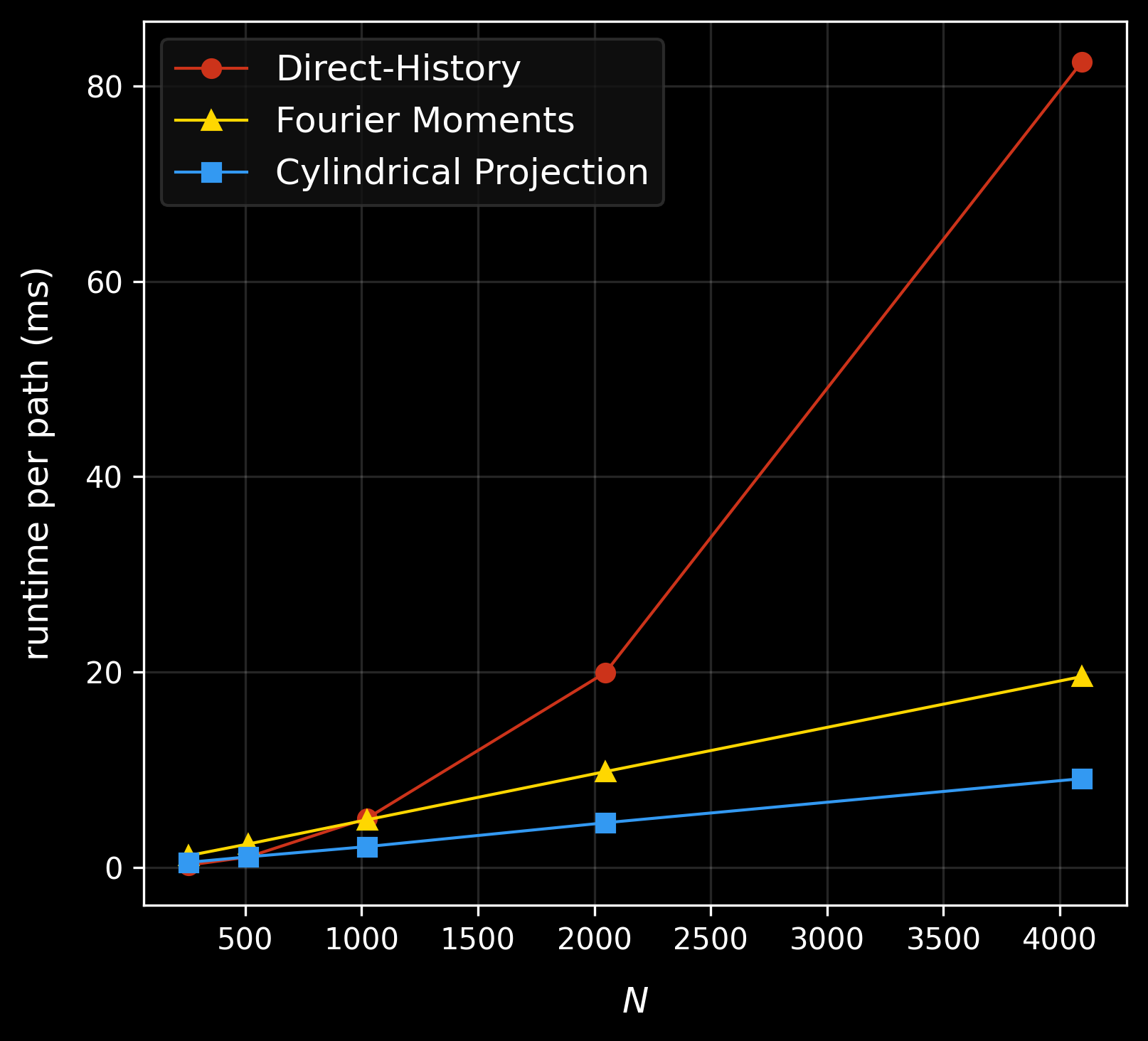}
        \label{fig:runtimeComparison_d2}
    \end{subfigure}
    \hfill
    \begin{subfigure}[b]{0.49\linewidth}
        \centering
          \caption{Active memory per path.}
        \includegraphics[width=0.8\linewidth]{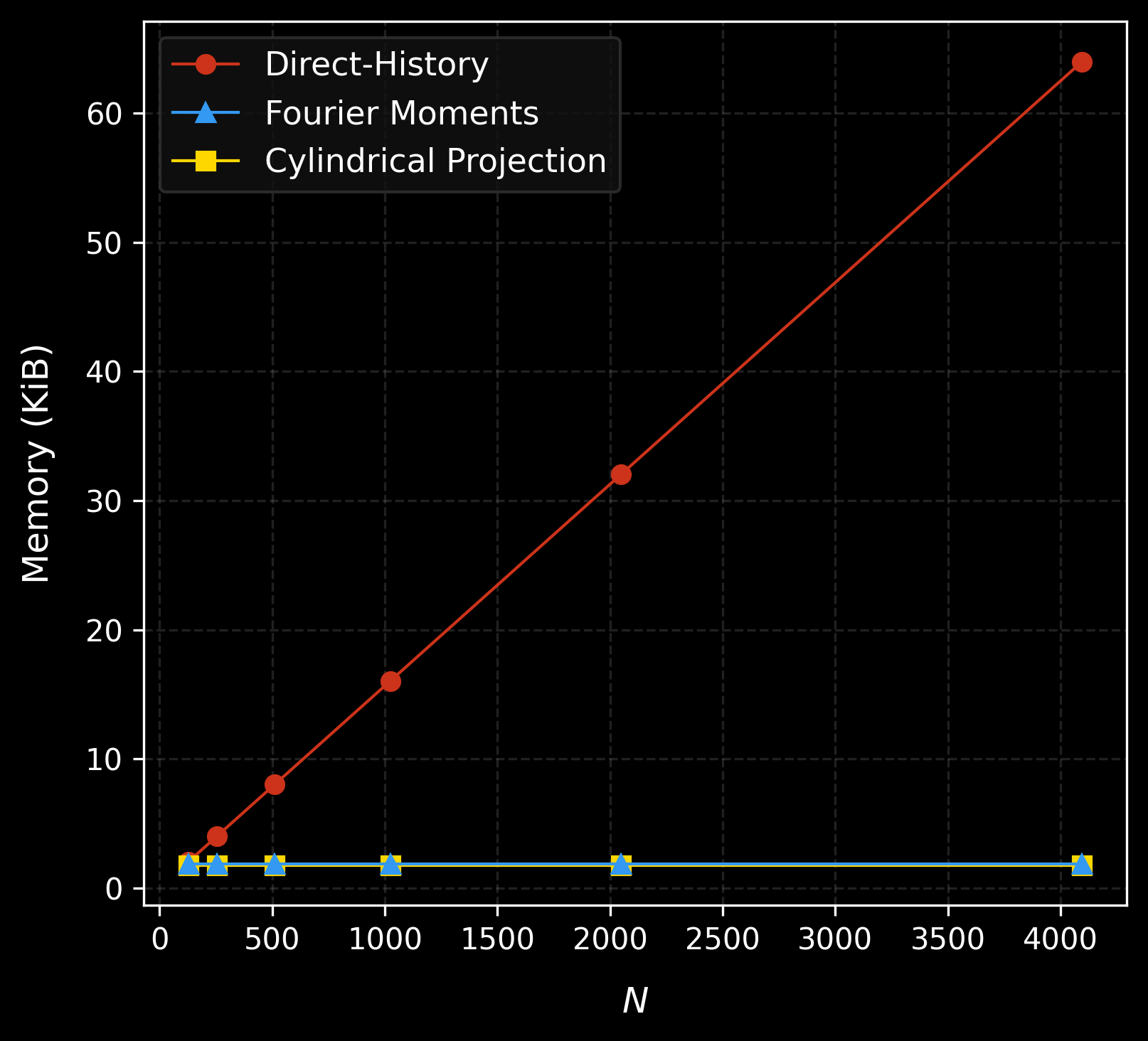}
        \label{fig:memorycomparison_d2}
    \end{subfigure}

    \vspace{1.5em} 
    
    \begin{subfigure}[b]{0.49\linewidth}
        \centering
        \includegraphics[width=0.8\linewidth]{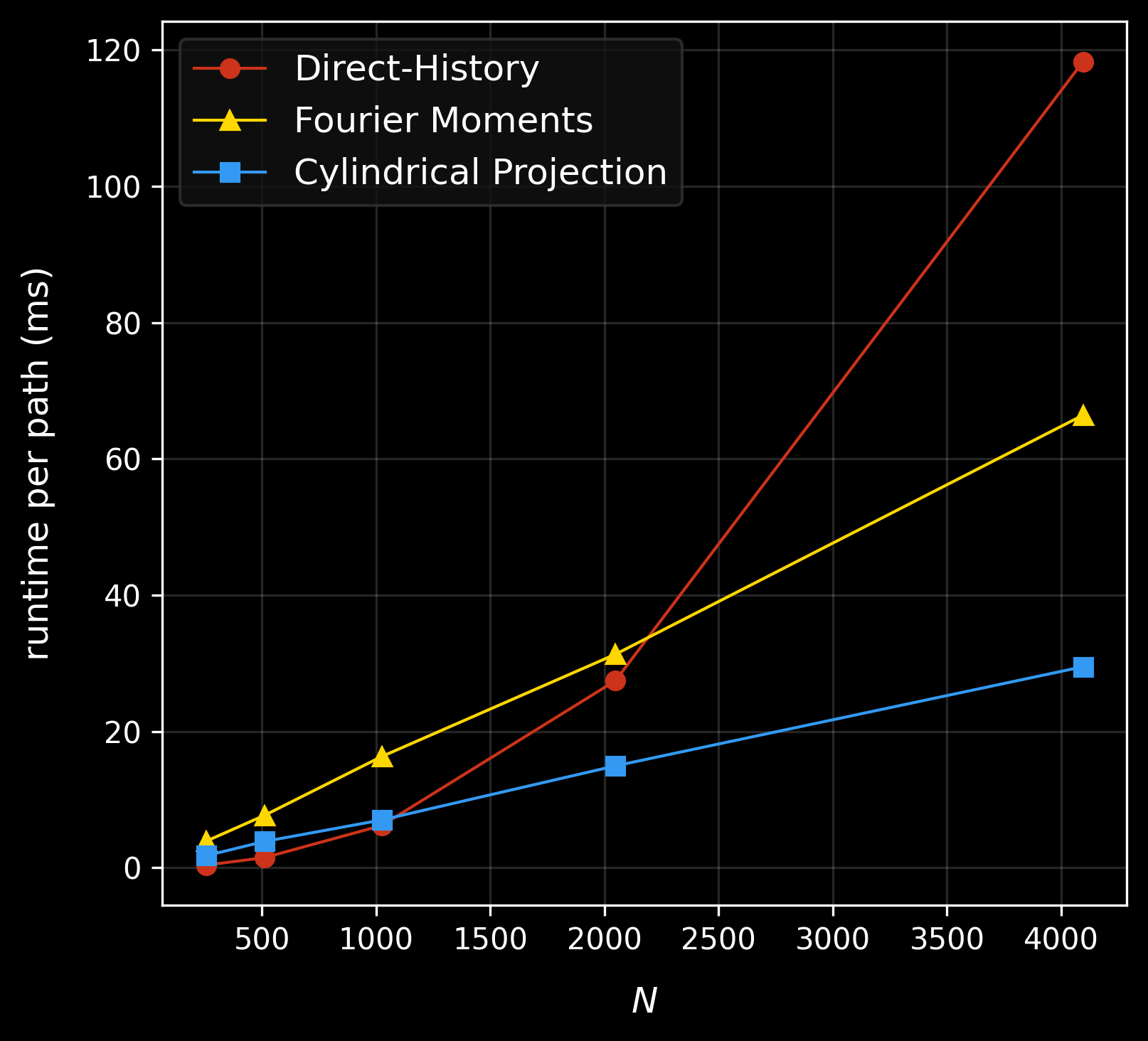}
        
        \label{fig:runtimeComparison_d3}
    \end{subfigure}
    \hfill
    \begin{subfigure}[b]{0.49\linewidth}
        \centering
        \includegraphics[width=0.8\linewidth]{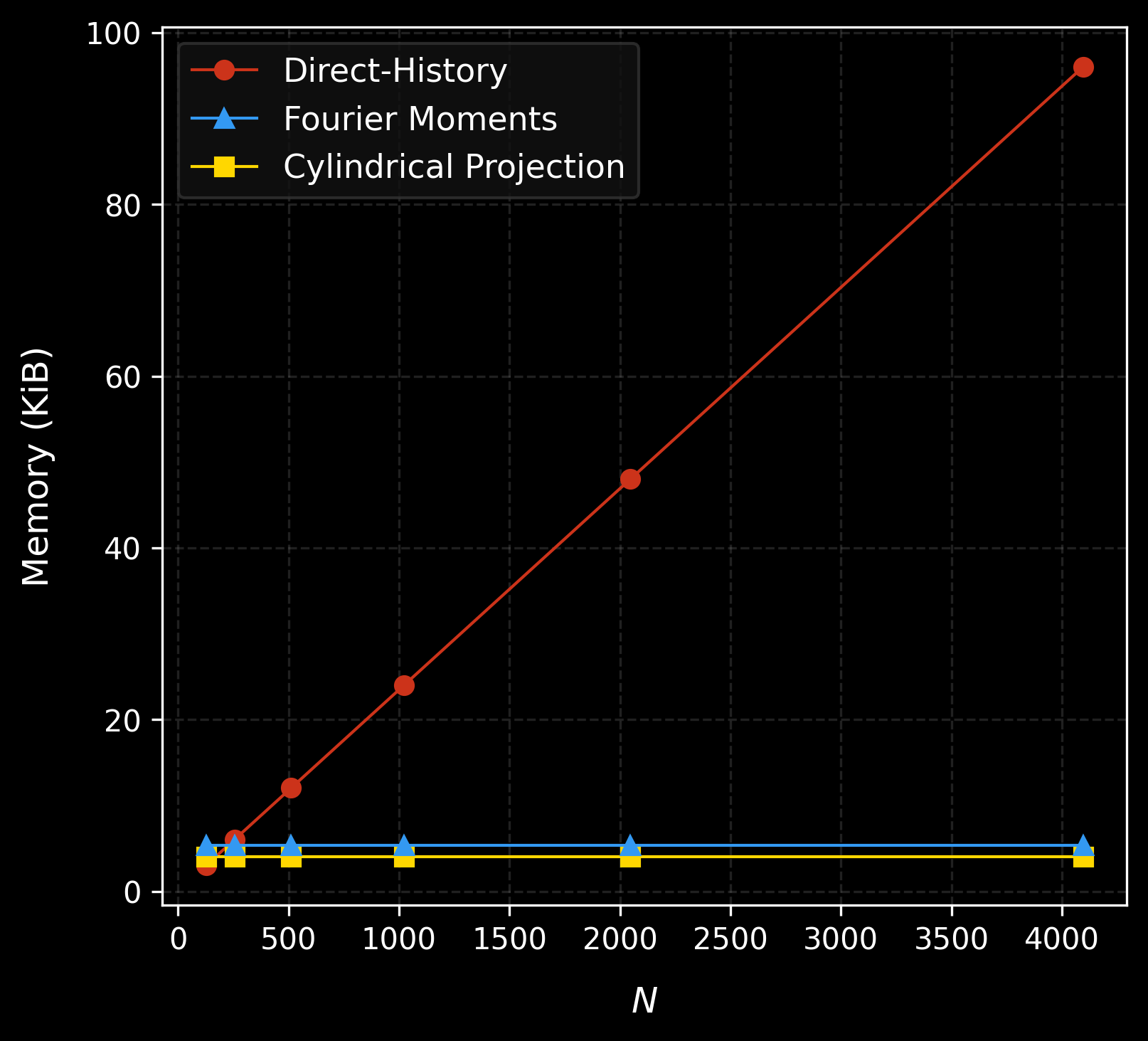}
        \label{fig:memorycomparison_d3}
    \end{subfigure}
    \label{fig:performance_metrics}
\end{figure}

\section{Weak Error Analysis and Monte Carlo Methods} \label{sec:WeakError}



Our main results in \cref{sec:convergence} provide strong error estimates arising from cylindrical projections. These theorems naturally lead to weak convergence rates as outlined next. 

\begin{corollary}\label{cor:weakconvergence}
    Suppose $\varphi:\mathcal{M} \times \mathbb R^d \to \mathbb R$ is Lipschitz with $\mathcal{M}\times \mathbb R^d$ equipped with the product of cylindrical  and Euclidean norm. Then we obtain the weak convergence rate, 
    \begin{align*}
        \left| \E^{\Q}[ \varphi(\calO_T^{\tau_R},X_T^{\tau_R})]-  \E^{\Q}[ \varphi(  \mo^K(Z_T^{K,\tau^K_R}) ,X_T^{K,\tau^K_R})] \right| \leq \frac{C\textnormal{Lip}(\varphi)}{K},
    \end{align*}
    where $\textnormal{Lip}(\varphi)$ denotes the Lipschitz coefficient of $\varphi$, and $C$ is the constant of Theorem~\ref{thm:convergence}. Moreover, similar to Remark~\ref{rmk:truncation}, $(\mo^K(Z_T^{K,\tau^K_R}) ,X_T^{K,\tau^K_R})$  weakly converges to $(\calO_T,X_T)$ at the rate  $1/K$ when $\lambda$ is bounded, and at the rate  $1/(\log K)^{0.35}$ when $\lambda$ is Lipschitz. 
\end{corollary}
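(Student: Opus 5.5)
The argument reduces the weak error to the strong error of \cref{thm:convergence} through the Lipschitz property of $\varphi$, followed by Jensen's (or Cauchy--Schwarz) inequality. Write $Y_T := (\calO_T^{\tau_R},X_T^{\tau_R})$ and $Y_T^K := (\mo^K(Z_T^{K,\tau^K_R}),X_T^{K,\tau^K_R})$. Then
\begin{align*}
\left|\E^{\Q}[\varphi(Y_T)] - \E^{\Q}[\varphi(Y_T^K)]\right| \le \E^{\Q}\big[|\varphi(Y_T)-\varphi(Y_T^K)|\big] \le \textnormal{Lip}(\varphi)\, \E^{\Q}\big[\lVert Y_T - Y_T^K\rVert\big].
\end{align*}
The right-hand side is at most $\textnormal{Lip}(\varphi)\,\E^{\Q}[\lVert Y_T-Y_T^K\rVert^2]^{1/2}$. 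Since the value at the terminal time is dominated by the supremum over $[0,T]$, this is bounded by the norm $\lVert\cdot\rVert_{\Q}$ appearing in \eqref{eq:truncationerror}. \cref{thm:convergence} then gives the bound $C\,\textnormal{Lip}(\varphi)/K$ with the same constant $C$. One point needs checking: the product of the cylindrical and Euclidean norms must be the metric $\lVert(\mo,x)\rVert$ used in $\lVert\cdot\rVert_{\Q}$, or at least equivalent to it. Any equivalence constant can be absorbed into $C$.

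For the unstopped statements, the same chain is applied to $(\calO_T,X_T)$ versus $Y_T^K$, with the triangle inequality inserted in the strong norm:
\begin{align*}
\lVert (\calO,X) - Y^K\rVert_{\Q} \le \lVert (\calO,X)-(\calO^{\tau_R},X^{\tau_R})\rVert_{\Q} + \lVert (\calO^{\tau_R},X^{\tau_R}) - Y^K\rVert_{\Q}.
\end{align*}
\cref{prop:exitbound} controls the first term and \cref{thm:convergence} the second. In the case where $\lambda$ is bounded, \cref{rmk:truncation}(i) states that $C$ in \eqref{eq:truncationerror} is independent of $R$. Choosing $R=K^{20/7}$ makes $R^{-0.35}=K^{-1}$, which yields the rate $1/K$. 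In the general Lipschitz case, \eqref{eq:convergencerate} shows that the constant behaves like $\sqrt{R}\,e^{CR/2}$. Choosing $R=\frac{1}{C}\log K$ gives a second term of order $\sqrt{\log K}\,K^{-1/2}$, which is negligible compared with the first term $(\log K)^{-0.35}$. This is exactly \cref{rmk:truncation}(ii). Applying the Lipschitz bound to these strong estimates gives the stated weak rates, and weak convergence in the sense of Lipschitz test functions follows.

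The main obstacle is bookkeeping. The proof has to track how the constant from \eqref{eq:convergencerate} depends on $R$, so that the choice $R=\frac1C\log K$ (with $C$ adjusted as needed) really makes the projection error subdominant. It also has to keep the initial-data factor $(1+\lVert(\mo,x)\rVert^2)^{0.675}$ inside the constant. A further subtlety is that \cref{thm:convergence} requires $\text{supp}(\mo)\subset B_R$. Since $R$ grows with $K$, this holds for all large $K$ once $\mo$ is compactly supported, and only such $K$ matter for the asymptotic rate.
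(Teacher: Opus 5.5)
Your proposal is correct and follows the paper's proof. You bound the weak error by $\textnormal{Lip}(\varphi)\,\E^{\Q}[\lVert\cdot\rVert]$, dominate that by the strong norm $\lVert\cdot\rVert_{\Q}$, invoke \cref{thm:convergence}, and for the unstopped rates you spell out the triangle-inequality combination of \cref{prop:exitbound} with \cref{rmk:truncation} ($R=K^{20/7}$, resp.\ $R=\frac1C\log K$) that the paper leaves implicit; your bookkeeping there (the $\sqrt{\log K}\,K^{-1/2}$ projection term and the support condition on $\mo$ for large $K$) is sound.
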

\begin{proof}
Write $\varphi_T^R = \varphi(\calO_T^{\tau_R},X_T^{\tau_R})$, $\varphi_T^{R,K} = \varphi(  \mo^K(Z_T^{K,\tau^K_R}) ,X_T^{K,\tau^K_R})$ for brevity. 
Thanks to  the Lipschitz property of $\varphi$, 
\begin{align*}
    \left| \E^{\Q}\big[\varphi_T^R\big]-  \E^{\Q}\big[ \varphi_T^{R,K}\big] \right| &\le  \E^{\Q} \left[ |\varphi_T^R- \varphi_T^{R,K}| \right] \\
    &\le \textnormal{Lip}(\varphi) \E^{\Q} \left[ 
    \lVert (\calO^{\tau_R}_T,X^{\tau_R}_T)-( \mo^K(Z^{K,\tau^K_R}_T) ,X^{K,\tau^K_R}_T) \rVert \right]\\ 
    &\le  \textnormal{Lip}(\varphi)\lVert (\calO^{\tau_R},X^{\tau_R})-( \mo^K(Z^{K,\tau^K_R}) ,X^{K,\tau^K_R}) \rVert_{\Q}.
\end{align*}
The result then follows from Theorem~\ref{thm:convergence}. 
\end{proof}

\paragraph{Applications to Monte Carlo Methods.}
Given a function $\varphi:\calM \times \R^d\to \R$ and a finite horizon $T>0$, suppose we are interested in   estimating the value $$p = \E^{\Q}[\varphi(\calO_T,X_T)],$$ 
where $X$ evolves according to \eqref{eq:OSDE}. Dropping the dependence on the exit time $\tau_R$ for brevity, then combining Monte Carlo simulations with cylindrical projection leads to the estimator
$$p^{K,J} = \frac{1}{J} \sum_{j=1}^J \varphi(\mo^{K}(Z^{K,j}_T),X_T^{K,j}).$$
Introduce  $p^K := \E^{\Q}[\varphi(\mo^K(Z^{K}_T),X_T^K)]$. Classically, $|p^K - p^{K,J}| \sim \frac{1}{\sqrt{J}}$, so together with \cref{cor:weakconvergence}, there exist finite constants $C_1,C_2$ such that  
$$|p - p^{K,J}| \le |p - p^{K}| + |p^K - p^{K,J}| \le \frac{C_1}{K} + \frac{C_2}{\sqrt{J}}.$$
In fact, a third error arises from time discretization which we shall omit here. 

\begin{example} Consider a floating-strike Asian call option in the local occupied volatility (LOV) model from \cref{sec:LOV} with $\kappa = 0$, $x_0 = 100$. That is $\varphi(\calO_T,X_T) = (X_T  - \overline{\calO_T})^+$, where $\overline{\calO_T} = \frac{1}{T}\int_{\R}x\calO_T(dx)$ is the barycenter of $\calO_T$. Since $(\mo,x) \mapsto x - \frac{1}{T}\int_{\R} y\mo(dy)$ is linear in $\mo$, $x$, and $x\to x^{+}$ is $1-$Lipschitz continuous, then $\varphi$ is Lipschitz as well. 
The Monte Carlo prices $p^{K,J}$ and weak errors $|p - p^{K,J}|$ are displayed in \cref{eq:MCAsian} as function of $K$, with $J = 2^{14}$ many simulations.  The exact price $p$ is estimated using $K = 100$ occupation bins. The right panel of \cref{eq:MCAsian} confirms the linear rate of convergence with respect to $K$ established in \cref{cor:weakconvergence}.   
\end{example}

\begin{figure}[H]
    \centering
    \caption{Floating Asian call option  in the LOV model \eqref{eq:LOV}-\eqref{eq:occVariance}.}
   
    \label{fig:MCconvergenceLOV}

\begin{subfigure}[b]{0.49\textwidth}
        \centering
      \caption{Monte Carlo price $K \mapsto p^{K,J}$, $J = 2^{14}$. }
    \includegraphics[width=0.92\linewidth]{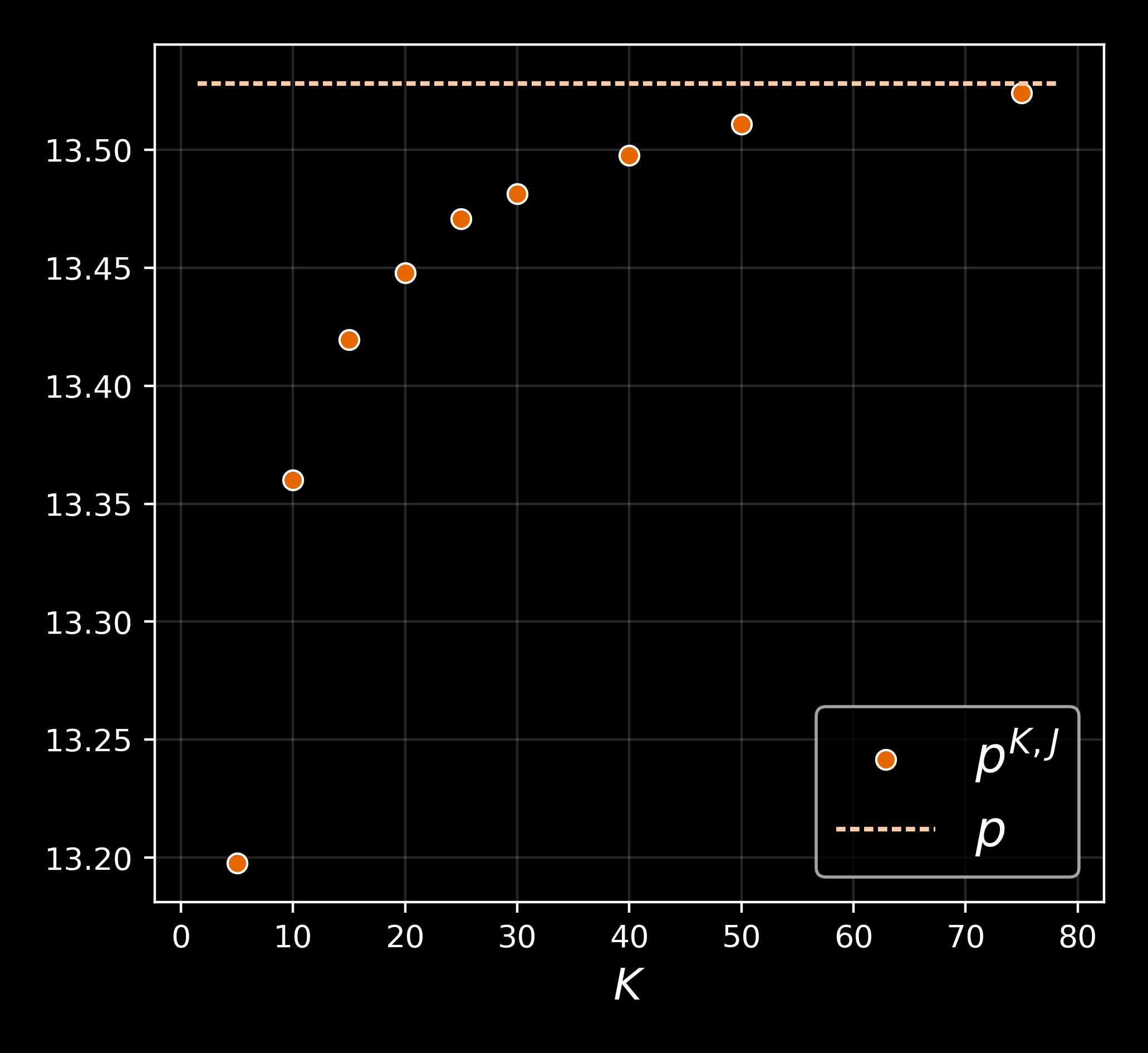}
    \label{fig:sim3}
\end{subfigure}
\begin{subfigure}[b]{0.49\textwidth}
 \centering
\caption{Weak error $K \mapsto |p - p^{K,J}|$}
    \includegraphics[width=0.9\linewidth]{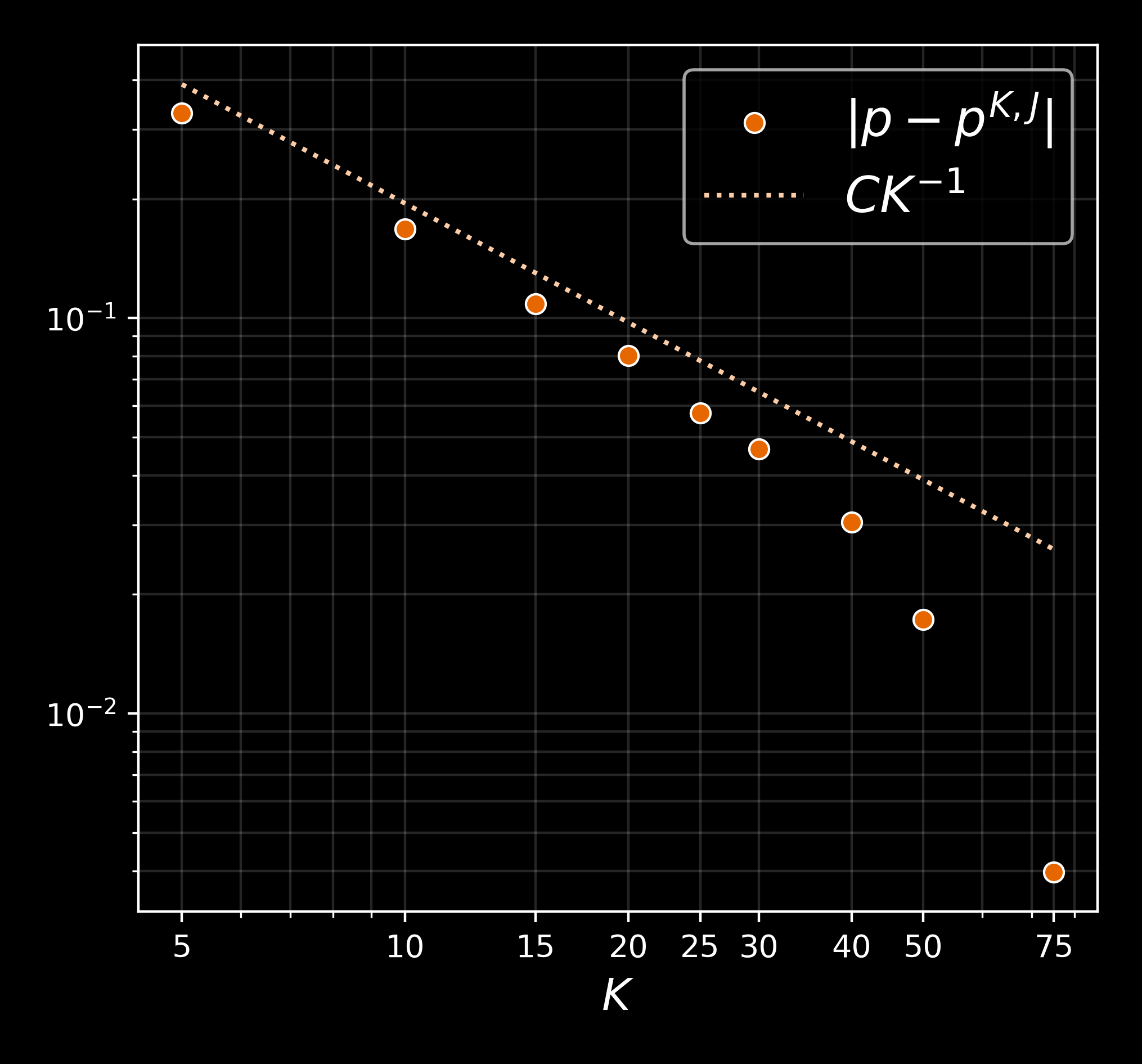}
    \label{fig:sim6}
\end{subfigure}
\label{eq:MCAsian}
\end{figure}

\appendix 

\section{Postponed Proofs} \label{app}
\subsection{Lemma~\ref{lem:stopP}} \label{app:Lemma}
\begin{proof}
Let $Y_t:=\sqrt{1+\lVert(\calO_t,X_t) \rVert^2 }$, and $ \tilde \tau_R:=\inf \{t \geq 0: \, Y_t \geq R \}$, where $(\calO,X)$ is the  strong solution to \eqref{eq:OSDE}. We  prove \eqref{eq:lenglart} through 
\begin{align*}
       \Q( \tau_R \leq T) \leq \Q (\tilde \tau_{R+1} \leq T ) \leq \frac{ e^{\iota \gamma T} (2-\gamma) \E^{\Q}[|Y_0|^{3\gamma}]}{(R+1)^{3 \gamma}(1-\gamma) } \leq \frac{ 4 e^{\iota \gamma T} (2-\gamma) \E^{\Q}[|Y_0|^{3\gamma}]}{R^{3 \gamma}(1-\gamma) },
    \end{align*}
where first and third inequalities are straightforward. So it suffices to show the second inequality. Let us consider $Z_t=e^{-\iota t} Y_t^3$ with some positive $\iota$ to be specified later. According to It\^{o}'s formula, 
 \begin{align*}
     dY_t=&\frac{1}{2Y_t}\left(2X_t \, dX_t +2 \sum_{j=0}^{\infty} \calO_t(g_j) g_j(X_t)\lambda (\calO_t, X_t) \, dt +\sigma(\calO_t,X_t)^2 \, dt \right) \\
     &- \frac{1}{2Y_t^3}\left(X_t^2 \sigma^2(\calO_t,X_t) \, dt \right),
 \end{align*}
 and also that 
 \begin{align*}
     d Z_t= & -\iota Z_t \, dt + 3 e^{-\iota t}  Y_t^2\, dY_t +3 e^{-\iota t} Y_t d \langle Y\rangle_t \\
      = &e^{-\iota t}  \left( -\iota Y_t^3 +   3Y_t^2 \left(\frac{2X_t b(\calO_t,X_t)+2 \sum_{j=0}^{\infty} \calO_t(g_j) g_j(X_t)\iota (\calO_t, X_t)+\sigma(\calO_t,X_t)^2}{2Y_t} \right) \right)dt \\
      & +e^{-\iota t}\left(- \frac{X_t^2 \sigma^2(\calO_t,X_t)} {2Y_t^3}     +\frac{3X_t^2\sigma(\calO_t,X_t)^2}{Y_t} \right) \,dt + \text{local martingale}. 
 \end{align*}

 Note that $\sum_{j=0}^{\infty} \calO_t(g_j) g_j(X_t) \leq \lVert \calO_t \rVert$ and $|X_t|+\lVert \calO_t \rVert+1 \leq 3Y_t$. Thanks to the linear growth assumption~\ref{asm:growthLip}, with large enough $\iota$ which only depends on $b,\sigma, \lambda$,  $Z$ becomes a local supermartingale. As $Z$ is nonnegative, it is a supermartingale. Therefore for any $A \in \mathcal{F}_0$ and stopping time $\tau$, we have that 
 \begin{align*}
     \E^{\Q}[ Z_{\tau} \mathbbm{1}_A] \leq \E^{\Q}[ Z_0 \mathbbm{1}_A]. 
 \end{align*}
 Applying \cite[Lemma 3.2]{GyKr03} with $f_t=Z_t$, $g_t=Z_0$, we get that for any $\gamma \in (0,1)$ and any stopping time $ \tau $
\begin{align*}
    \E^{\Q} \left[ \left(\sup_{t \leq \tau \wedge T} Z_t\right)^{\gamma}\right] \leq \frac{2-\gamma}{1-\gamma} \E^{\Q}[Z_0^{\gamma} ],
\end{align*}
and thus 
\begin{align*}
    e^{-\iota \gamma T} \E^{\Q} \left[ \sup_{t \leq  T} Y_t^{3 \gamma} \right]\leq \frac{2-\gamma}{1-\gamma} \E^{\Q}[Y_0^{3\gamma} ].  
\end{align*}
We then conclude that 
\begin{align*}
    \Q (\tilde\tau_R \leq T ) &=\Q \left( \sup_{t \leq T} |Y_t|  \geq R^{3 \gamma} \right)  \leq \frac{ \E^{\Q} \left[ \sup_{t \leq  T} Y_t^{3\gamma} \right]}{ R^{3\gamma}} \leq \frac{ e^{\iota \gamma T} (2-\gamma) \E^{\Q}[Y_0^{3\gamma}]}{(1-\gamma)R^{3 \gamma} }.
\end{align*}
\end{proof}

\subsection{Proposition~\ref{lem:stopP}} \label{app:Volterra}
\begin{proof}
    Let 
$Z_t := \int_0^t (X_t - X_u)\,du$. Noting that  $dZ_t = t\,dX_t$ and $dX_t = - \beta Z_t\,dt +  dW_t $ leads to the linear SDE
\[
dZ_t  = -\beta t Z_t\,dt + t\,dW_t, \qquad Z_0 = 0 .
\]
Using the integrating factor $e^{\frac{\beta}{2}t^2}$, it is then easily seen that 
$
Z_t
= e^{-\frac{\beta}{2}t^2} \int_0^t e^{\frac{\beta}{2}s^2}\, s \, dW_s.$
Integrating the  SDE \eqref{eq:SelfAttracting} and applying the stochastic Fubini theorem, we conclude that 
\begin{align*}
    X_t &= x  - \beta \int_0^t Z_u\,du + W_t  
  = x -\beta \int_0^t e^{-\frac{\beta}{2}u^2}
 \left( \int_0^u e^{\frac{\beta}{2}s^2} s\, dW_s \right) du + W_t \\[0.5em] 
&= x + \int_0^t 
\left[ 1- \beta s 
e^{\frac{\beta}{2}s^2} \int_s^t e^{-\frac{\beta}{2}u^2}\,du
\right] dW_s
= x + \int_0^t \kappa(t,s)dW_s,
\end{align*}
with $\kappa$ given in \eqref{eq:Volterra}.
\end{proof}
\bibliographystyle{abbrvnat}
\bibliography{ref.bib}
\end{document}